\definecolor{lightgray}{gray}{0.9}
\definecolor{lime}{HTML}{A6CE39}
\DeclareRobustCommand{\orcidicon}{
	\begin{tikzpicture}
	\draw[lime, fill=lime] (0,0) 
	circle [radius=0.16] 
	node[white] {{\fontfamily{qag}\selectfont \tiny ID}};
	\draw[white, fill=white] (-0.0625,0.095) 
	circle [radius=0.007];
	\end{tikzpicture}
	\hspace{-2mm}
}
\numberwithin{equation}{section}
\newtheorem{theorem}{Theorem}
\begin{document}

\title{Convex Mixed-Integer Nonlinear Programs Derived from Generalized Disjunctive Programming using Cones}

% \author{ David E. Bernal Neira\\
% 	Department of Chemical Engineering\\
% 	Carnegie Mellon University\\
% 	Pittsburgh, PA 15213 \\
% 	Research Institute of Advanced Computer Science\\
% 	Universities Space Research Association\\
% 	Washington, DC 20024 \\
% 	Quantum Artificial Intelligence Laboratory\\
% 	NASA Ames Research Center\\
% 	Moffett Field, CA 94035 \\
% 	\texttt{bernalde@cmu.edu} \\
% 	%% examples of more authors
% 	\and
% 	Ignacio E.~Grossmann \\
% 	Department of Chemical Engineering\\
% 	Carnegie Mellon University\\
% 	Pittsburgh, PA 15213 \\
% 	\texttt{grossmann@cmu.edu}
% 	}

\author[1,2,3]{David E. Bernal Neira\orcidA{}}

\author[4]{Ignacio E. Grossmann\orcidB{}}

\affil[1]{Davidson School of Chemical Engineering, Purdue University, IN, 47907, USA}

\affil[2]{Research Institute of Advanced Computer Science, Universities Space Research Association, Washington, DC, 20024, USA}

\affil[3]{Quantum Artificial Intelligence Laboratory, NASA Ames Research Center, Moffett Field, CA, 94035, USA}

\affil[4]{Chemical Engineering Department, Carnegie Mellon University, Pittsburgh, PA, 15213, USA}

\date{}
	
\maketitle

\begin{abstract}
We propose the formulation of convex Generalized Disjunctive Programming (GDP) problems using conic inequalities leading to conic GDP problems.
We then show the reformulation of conic GDPs into Mixed-Integer Conic Programming (MICP) problems through both the big-M and hull reformulations.
These reformulations have the advantage that they are representable using the same cones as the original conic GDP.
In the case of the hull reformulation, they require no approximation of the perspective function.
Moreover, the MICP problems derived can be solved by specialized conic solvers and offer a natural extended formulation amenable to both conic and gradient-based solvers.
We present the closed form of several convex functions and their respective perspectives in conic sets, allowing users to formulate their conic GDP problems easily.
We finally implement a large set of conic GDP examples and solve them via the scalar nonlinear and conic mixed-integer reformulations.
These examples include applications from Process Systems Engineering, Machine learning, and randomly generated instances.
Our results show that the conic structure can be exploited to solve these challenging MICP problems more efficiently.
Our main contribution is providing the reformulations, examples, and computational results that support the claim that taking advantage of conic formulations of convex GDP instead of their nonlinear algebraic descriptions can lead to a more efficient solution to these problems.
\end{abstract}

\section{Introduction} 
\label{sec:intro}

A Mixed-Integer Nonlinear Programming (MINLP) problem involves nonlinear algebraic inequalities describing the constraints and objectives while the variables can take continuous or discrete values.
MINLP is a problem class of great interest, both theoretical~\citep{liberti2019undecidability} and practical~\citep{trespalacios2014review,lee2011mixed}.
In particular, MINLP problems \textit{formulations} allow modeling a wide range of applications.
Most industrial problems can be modeled using MINLP~\citep{Liberti2017}.

A particular class of MINLP problems is where the constraints are convex functions.
Although it is non-convex because of the nature of the discrete variables, this problem is known as convex MINLP~\citep{fletcher1994solving,kronqvist2019review}.
This class of MINLP is a subject of interest given the many applications that it can represent.% and the challenging algorithmic requirements that need to be tackled to address their problems.
For a review on convex MINLP, refer to Kronqvist et al.~\citep{kronqvist2019review}.

% In this manuscript, we are concerned with convex Mixed-Integer Nonlinear Programming (MINLP) problems.
A convex MINLP problem is defined as
\begin{equation}
	\label{prob:MINLP}
	\tag{MINLP}
	\begin{aligned}
  &\min_{\mathbf{x,y}} &&f(\mathbf{x,y})\\
	& \textnormal{s.t. } &&\mathbf{g}(\mathbf{x,y}) \leq \mathbf{0},\\
	& && \mathbf{y}^l \leq \mathbf{y} \leq \mathbf{y}^u,\\
	& &&\mathbf{x}\in \mathbb{R}_+^{n_x},\ 
	\mathbf{y} \in \mathbb{Z}^{n_y},
	\end{aligned}
\end{equation}
where the objective function $f: \mathbb{R}^{n_x + n_y} \to \mathbb{R} \cup \{ \infty \}$ is convex and the constraints $\mathbf{g}:\mathbb{R}^{n_x + n_y} \to (\mathbb{R} \cup \{ \infty \} )^{J}$ define a convex set $\mathcal{F} = \{ \mathbf{x} \in \mathbb{R}_+^{n_x}, \mathbf{y} \in \mathbb{R}^{n_y} \mid \mathbf{g}(\mathbf{x},\mathbf{y}) \leq \mathbf{0} \}$.
Although it is not necessary, we will consider that each constraint, $g_j(\mathbf{x},\mathbf{y})$ for $j \in \{1, \dots, J\} = \llbracket J \rrbracket$, is a convex function.
We consider bounded integer variables $\mathbf{y}$.
Without loss of generality, we will assume that the objective function is linear, which can be achieved through the epigraph reformulation~\citep{kronqvist2019review}.
Notice that, although the continuous relaxation of the feasible region $F$ is convex, the original convex MINLP feasible region is non-convex given the discrete nature of variables $\mathbf{y}$.

Among the solution techniques for convex MINLP, several have been adapted from the Mixed-Integer Linear Programming (MILP), including Branch \& Bound~\citep{dakin1965tree} and Benders Decomposition~\citep{geoffrion1972generalized}.
In contrast, others generalize the solutions methods for convex continuous Nonlinear Programming (NLP) problems, such as the Extended Cutting Plane methods~\citep{westerlund1998extended}.
A particularly successful approach to convex MINLP is the outer-approximation (OA) method proposed by Duran and Grossmann~\citep{duran1986outer}, where an iterative solution of convex NLP and MILP subproblems is performed.
The MILP is derived through first-order Taylor approximations, or gradient-based linearizations, of the nonlinear constraints at the NLP solutions. The NLPs stem from the problems appearing when fixing the values of the discrete variables at the MILP solution~\citep{duran1986outer,fletcher1994solving}.
Many current commercial tools to solve convex MINLP rely on the OA method~\citep{kronqvist2019review}.

In continuous convex programming, solution methods have also been derived by generalizing Linear Programming (LP) notions and techniques.
One of the most successful ones has been the proposal of convex optimization problems as problems defined over cones, or Conic Programming (CP) problems~\citep{ben2001lectures}.
CP is a numerically stable alternative for convex programming~\citep{ben2001lectures},  given that it exploits properties of the conic sets.
Convex programming problems described via algebraic convex nonlinear constraints of the form $f(x) \leq 0$ can be equivalently posed as a linear transformation of the variables belonging to proper cones $\mathcal{K}$, i.e., $\mathbf{A}\mathbf{x} - \mathbf{b} \in \mathcal{K}$~\citep{ben2001lectures,kilincc2016minimal}.
A generalization of CP, where some variables are required to take discrete values, is Mixed-Integer Conic Programming (MICP).
MICP problems are highly expressible and can represent a wide range of optimization problem~\citep{lubin2017mixed}.
Many of these applications have been gathered in the problem library CBLib~\citep{friberg2016cblib}.

The automatic identification and translation of the two equivalent descriptions of convex sets is a crucial feature for developing algorithmic solution software, \textit{solvers}.
This is because the description of problems using algebraic constraints might be more natural for certain practitioners, e.g., in fields such as chemical engineering, where material and energy balances are naturally representable via scalar algebraic constraints.
However, the conic description of the problem allows taking advantage of mathematical properties such as conic duality for more stable solution procedures.
Generic solvers have been designed to tackle CP problems, e.g., MOSEK~\citep{aps2018mosek}, ECOS~\citep{domahidi2013ecos}, and Hypatia~\citep{coey2020towards}.
This translation is not trivial~\citep{vanderbei1998using,Zverovich2015,erickson2019detection}.
However, it has been achieved for the quadratic case, allowing solution methods based on CP to be used for these problems.
An alternative to translating practical optimization problems into CP is via Disciplined Convex Programming (DCP)~\citep{grant2006disciplined}, where strict rules of function definitions guarantee the problem's convexity and perform the translation such that they can be solved through generic conic solvers.

In the mixed-integer setting, solvers have been designed to take as input the MICP problem, taking advantage of this form of the optimization problem structure, e.g., Mosek~\citep{aps2018mosek}, and Pajarito~\citep{lubin2016extended,lubin2018polyhedral,coey2020outer}.
Even for solvers that do not necessarily consider the conic representation of convex problems, identifying such structures leads to improvements in its performance, such as in SCIP~\citep{vigerske2013decomposition,bestuzheva2021computational} and BARON~\citep{khajavirad2018hybrid}.
There is a significant potential for MINLP solvers to perform automatic reformulations once they identify correct structures~\citep{gunluk2010perspective}.
An example of the automatic identification of conic structures is Mixed-Integer Quadratically-constrained Quadratic Programming (MIQCQP) problems can now be tackled through Mixed-Integer Second-Order Conic Programming (MISOCP) methods in commercial solvers such as Knitro~\citep{waltz2017knitro}, Xpress~\citep{belotti2016algorithms}, Gurobi~\citep{gurobi}, and CPLEX~\citep{cplex}.

The discrete nature of the integer variables in mixed-integer programming problems has been exploited to derive efficient solution methods for these problems.
In particular, deriving sets of extra inequalities, \textit{cutting planes} or \textit{cuts}, has allowed a considerable speedup in the solution of these problems, see~\citep{conforti2014integer}.
One of the key disciplines for deriving such cutting planes is Disjunctive Programming~\citep{balas2018disjunctive}, which considers the optimization over disjunctive sets such as the one given by the domain of the discrete variables.
In the convex nonlinear setting, the conic structure has been exploited to derive special cutting planes for MICP solution methods ~\citep{ccezik2005cuts,belotti2015conic,lodi2019disjunctive}.
A source of these problems are those driven by \textit{indicator variables}, that activate or deactivate sets of constraints~\citep{gunluk2010perspective}, see a review by Bonami et al.~\citep{bonami2015mathematical}.

Generalized Disjunctive Programming (GDP) was proposed by Grossmann and Lee~\citep{grossmann2003generalized} as an intuitive way of describing the logic behind applications.
In this setting, sets of constraints are activated with logical variables linked to each other by logical constraints, including disjunctions.
This mathematical description of the problem can be tackled directly by logic-based optimization methods~\citep{chenpyomo}, which generalize mixed-integer solution methods to the logical domain.
Another way of solving these problems is through reformulations into mixed-integer programs, where the logical variables are mapped to binary or indicator variables.
An important reference that addresses the reformulation of unions of convex sets is given by Vielma~\citep{vielma2019small}.
Depending on the linearity of the constraints within the GDP, the reformulations can yield a MILP or MINLP problem.
The two most common reformulations are: the big-M reformulations, where a large coefficient is added to make the constraints redundant in the case their associated indicator variable is inactive, and the hull reformulation (HR), where using Disjunctive Programming theory, a set of constraints in an extended space are derived such that their projection onto the space of the original variables is the convex hull of the disjunctive sets.con
These two reformulations yield different mixed-integer models, which can be characterized by size and tightness.
The tightness of a mixed-integer model is measured through the difference of the optimal solution of the problem, ignoring the discrete constraints, known as the \textit{continuous relaxation}, and the original problem's optimal solution~\citep{trespalacios2014review,vielma2019small}.
The big-M and hull reformulations offer a tradeoff between tightness and problem size.
The HR is the tightest possible model, while the big-M formulation requires the least additional continuous variables and constraints, adding a single constraint per constraint in the disjunctions and using the same number of continuous variables as in the original disjunctive model.
Both the model size and tightness are relevant to the efficiency of solution methods of mixed-integer programs~\citep{gunluk2012perspective,vielma2019small}.

Perspectives of nonlinear functions arise in formulations of convex MINLP problems.
We denote these expressions as perspective functions.
When the original functions are nonlinear, these can become challenging for solvers.
In particular, they can be non-differentiable at 0~\citep{gunluk2012perspective,furman2020computationally}.
The HR of a convex disjunctive program requires including perspective functions in the resulting MINLP. 
These perspective functions can be written in terms of the original variables of the disjunctive program\citep{hijazi2012mixed,bonami2015mathematical} or in a higher dimensional space~\citep{lee2000new,grossmann2003generalized}. 
The perspective functions can be included directly in the convex MINLP problem or indirectly by generating valid cutting-planes~\citep{stubbs1999branch,frangioni2006perspective}.
A recent computational study shows the positive impact of perspective cuts in the MINLP framework~\citep{bestuzheva2021computational}. 
The importance of the perspective functions and the challenges associated with their implementation have motivated their study.
$\varepsilon$-approximations exist for general convex functions~\citep{lee2000new,furman2020computationally}, while customized versions have been derived for special cases~\citep{akturk2009strong,gunluk2010perspective,hijazi2012mixed}.

\subsection{Contributions}
In this manuscript, we systematically study different convex GDP problems.
In these problems, the convex sets within the disjunctions are representable using conic constraints.
We denote these problems as conic GDPs.
We provide the reformulations of such conic GDP problems into MICP problems via the big-M and the hull reformulations.
The MICP instances can be addressed with specialized solvers that exploit their conic structure.
Moreover, in the case of the HR, we avoid numerical challenges derived from the perspective reformulation of nonlinear functions that appear when using scalar nonlinear constraints in the disjunctions.
We present a set of examples and perform a computational study that supports our central hypothesis: considering the original conic structure of conic GDP problems can lead to more efficient solutions to such problems.
This is when comparing the MICP reformulations of conic GDPs to the MINLPs derived from the scalar nonlinear descriptions of convex sets within disjunctions in convex GDPs.

\section{Generalized Disjunctive Programming}
\label{sec:background.gdp}

The framework of Generalized Disjunctive Programming (GDP) was introduced by Raman and Grossmann~\citep{raman1994modelling}.
This modeling paradigm extends the usual mathematical programming paradigm by allowing Boolean variables, logical constraints, and disjunctions to appear in the optimization problem formulation.
We define a GDP as follows:
\begin{equation}
\label{prob:gdp}
\begin{aligned}
\min_{\mathbf{x},\mathbf{Y}} &\ f(\mathbf{x}) \\
\textnormal{s.t. }&\ \mathbf{g}(\mathbf{x}) \leq \mathbf{0}\\
&\ \bigvee_{i\in D_k} \left[
    \begin{gathered}
    Y_{ik} \\
    \mathbf{h}_{ik}(\mathbf{x})\leq \mathbf{0}\\
    \end{gathered}
\right], \quad  k \in K\\
&\ \veebar_{i \in D_k} Y_{ik}, \quad  k \in K \\
&\ \Omega(\mathbf{Y}) = True \\
&\ \mathbf{x}^{l} \leq \mathbf{x} \leq \mathbf{x}^{u}\\
&\ \mathbf{x} \in \mathbb{R}^{n}\\
&\ Y_{ik} \in \{False, True\}, \quad  k \in K, i \in D_k,\\
\end{aligned} \tag{GDP}
\end{equation}
where constraints $\mathbf{g}(\mathbf{x}) \leq \mathbf{0}$ are called global constraints, the set $K$ represents the possible disjunctions in the problem, and each element $i$ of the set $D_k$ represents a disjunctive term, also called disjunct, in that disjunction.
In the disjunction $k \in K$, each disjunct $i \in D_k$ has a set of constraints $\mathbf{h}_{ik}(\mathbf{x}) \leq \mathbf{0}$ which are activated when a Boolean variable associated with the disjunct is equal to $True$, i.e., $Y_{ik}=True$.
Each disjunct may contain a different number of constraints $J_{ik}$, i.e., $\mathbf{h}_{ik}(\mathbf{x}) = (h_{ik1}(\mathbf{x}),\dots,h_{ikJ_{ik}}(\mathbf{x})) = (h_{ik\llbracket J_{ik} \rrbracket}(\mathbf{x}))$.
These constraints define the set $\mathcal{C}_{ik} = \{ \mathbf{x} \in \mathbb{R}^n \mid \mathbf{h}_{ik}(\mathbf{x}) \leq \mathbf{0} \}$, to which the point $\mathbf{x}$ belongs to when the disjunct is active, i.e., $Y_{ik}=True$.
The disjuncts within the disjunction are related through an inclusive-or operator $\vee$, which means that at least one Boolean variable in every disjunction, $Y_{ik},  k \in K$, is set to $True$.
Each disjunction defines a disjunctive set, like the ones introduced in the previous section.
$\Omega(\mathbf{Y})$ represent logical propositions in terms of the Boolean variables $\mathbf{Y}$.
These logical constraints can be written in Conjunctive Normal Form (CNF), i.e., $\Omega(\mathbf{Y}) = \bigwedge_{t\in T} \left[ \bigvee_{Y_{ik} \in R_t} (Y_{ik}) \bigvee_{Y_{ik} \in Q_t} (\neg Y_{ik}) \bigvee \right]$ where for each logical clause $t \in T$, the subset $R_t \subseteq \mathbf{Y}$ are non-negated Boolean variables and the subset $Q_t \subseteq \mathbf{Y}$ are the negated Boolean variables.
We assume that the exclusive-or operators among the Boolean variables for each disjunction $k \in K$, i.e., $\veebar_{i \in D_k} Y_{ik}$, are included in $\Omega(\mathbf{Y}) = True$~\citep{grossmann2012generalized,sawaya2006reformulations}.
% This would be wrong if we had extra binaries $Y$ with no constraints since the convex sets would be empty, and hence the HR reformulation wouldn't work
It has been proved that GDP is equivalent to disjunctive programming in the case that the constraints are linear~\citep{sawaya2012hierarchy} and convex~\citep{ruiz2012hierarchy}.

Besides offering an intuitive modeling paradigm of discrete problems through disjunctions, a GDP model can be used to inform computational solution tools, i.e., solvers, of the original problem's underlying structure, thus leading to improved solving performance.
The tailored solution methods for GDP are usually based on generalizing algorithms for MINLP, where the optimization problems are decomposed. In these methods, discrete variables are fixed and allow solving the problem only in terms of the continuous variables.
Different methods are used to select the combination of these discrete variables, including branching across the different values the discrete variables can take, i.e., Branch \& Bound (B\&B), or solving a linear approximation of the original problem~\citep{kronqvist2019review}.
For GDP algorithms, contrary to the case in MINLP, these Nonlinear Programming (NLP) subproblems only include the constraints that concern the logical variable combinations.
Among these tailored algorithms, we encounter the Logic-based Branch \& Bound (LBB) and the Logic-based Outer-Approximation (LOA).
For more information on general GDP algorithms, refer to~\citep{chenpyomo}.

Another route to solve these problems is through the reformulation to Mixed-integer problems, where binary variables $\mathbf{y} \in \{ 0,1 \}^{\sum_{k \in K} \lvert D_k \rvert}$ are added to the problem in exchange of the Boolean variables and constraints within the disjunction are enforced subject to the binary variables' value.
Notice that these reformulations yield problems of the form~\ref{prob:MINLP}.
The logical propositions $\Omega(\mathbf{Y})=True$ can be easily reformulated as a set of linear inequality constraints, $E\mathbf{y} \leq \mathbf{e}$, in terms of the binary variables~\citep{raman1994modelling,grossmann2012generalized,williams2013model}.
In the case that $\Omega(\mathbf{Y})$ is written in CNF, this reformulation is simply $\sum_{y_{ik} \in R_t} y_{ik} + \sum_{y_{ik} \in Q_t} (1-y_{ik}) \geq 1, t \in T$.
An example is the exclusive-or constraint $\veebar_{i \in D_k} Y_{ik}$ reformulated as a partitioning constraint $\sum_{i \in D_k} y_{ik} = 1,  k \in K$.
These approaches take advantage of the more mature mixed-integer solvers available commercially.

The big-M reformulation is among the best-known reformulations for GDP problems.
In this case, each disjunction's constraints are relaxed by adding a large term, $M$, if its corresponding binary variable equals zero.
The formulation of the big-M reformulation is as follows:
\begin{equation}
\label{prob:bigm}
\begin{aligned}
\min_{\mathbf{x},\mathbf{y}} &\ f(\mathbf{x}) \\
\textnormal{s.t. }&\ \mathbf{g}(\mathbf{x}) \leq \mathbf{0}\\
    &h_{ikj}(\mathbf{x})\leq M_{ikj}(1-y_{ik}), \quad  k \in K, i \in D_k, j \in \llbracket J_i \rrbracket, \\
    &\sum_{i \in D_k} y_{ik} = 1, \quad  k \in K \\
    & E\mathbf{y} \leq \mathbf{e}\\
&\ \mathbf{x}^{l} \leq \mathbf{x} \leq \mathbf{x}^{u}\\
&\ \mathbf{x} \in \mathbb{R}^{n}\\
&\ y_{ik} \in \{0,1\}, \quad  k \in K, i \in D_k,\\
\end{aligned} \tag{Big-M}
\end{equation}
where the coefficient $M_{ikj}$ has to be large enough to guarantee the enforcement of the original GDP logic, i.e., $y_{ik} = 1 \to \mathbf{h}_{ik}(\mathbf{x}) \leq \mathbf{0}$, but small enough to avoid numerical problems related to solving accuracy~\citep{trespalacios2014review}.
The numerical problems associated with large big-M values are related to possible ill-conditioned matrices appearing in the solution methods of the problem, rounding errors leading to false claiming of feasible solutions, and slow progress in B\&B algorithms; issues discussed by optimization practitioners \footnote{\url{https://thiagoserra.com/2017/06/15/big-m-good-in-practice-bad-in-theory-and-ugly-numerically/}}.
The smallest valid value for the big-M coefficient can be accomplished by setting $M_{ikj} = \max_{\mathbf{x} \in \{ \mathbf{x} : \mathbf{h}_{ik} \leq \mathbf{0},  \mathbf{x}^{l} \leq \mathbf{x} \leq \mathbf{x}^{u}\}} h_{ikj}(\mathbf{x}), j \in \llbracket J_i \rrbracket$.
This problem, in general, is not trivial to solve, given that it would involve maximizing a convex function over a convex domain; hence a potentially non-convex problem.
Although traditionally used, the big-M reformulation is well-known for its often weak continuous relaxation gap, i.e., the difference in the optimal objective function when solving the problem considering $y_{ik} \in [0,1] \subset \mathbb{R},  k \in K, i \in D_k$ compared to the original problem's optimal objective.%; even with the tightest values for the $M$ coefficients.
This is particularly important for solution methods based on B\&B, where this continuous relaxation gives the first node in the search tree.

Another valid transformation of problem~\ref{prob:gdp} into a mixed-integer problem is the hull reformulation (HR).
This reformulation uses the same mapping of Boolean into binary variables as in~\ref{prob:bigm}.
On the other hand, it introduces copies of the $\mathbf{x}$ variables, $\mathbf{v}_{ik}$ for each disjunct $k \in K, i \in D_k$ and uses the closure of the perspective function, $\left(\textnormal{cl }\tilde{\mathbf{h}}_{ik} \right)(\mathbf{v}_{ik},y_{ik}) \leq 0$, as defined in \eqref{eq:clperspective} to enforce the constraints when their corresponding binary variable is active.
Moreover, we use the standard topological notion of \emph{closure} of a set $\mathcal{C}$ as $\textnormal{cl }\mathcal{C}$.
We denote the unique closed extension or closure of a convex function $f$ as $\left( \textnormal{cl } f \right)$, as used in \citep{ceria1999convex,bonami2015mathematical}.
The formulation for the HR of a GDP is as follows:
\begin{equation}
\label{prob:hr}
\begin{aligned}
\min_{\mathbf{x},\mathbf{v},\mathbf{y}} &\ f(\mathbf{x}) \\
\textnormal{s.t. }&\ \mathbf{g}(\mathbf{x}) \leq \mathbf{0}\\
    & \mathbf{x} = \sum_{i \in D_k} \mathbf{v}_{ik}, \quad  k \in K \\
    &\left(\textnormal{cl }\tilde{\mathbf{h}}_{ik} \right)(\mathbf{v}_{ik},y_{ik}) \leq 0, \quad k \in K, i \in D_k\\
    &\sum_{i \in D_k} y_{ik} = 1, \quad k \in K \\
    & E\mathbf{y} \leq \mathbf{e}\\
&\ \mathbf{x}^{l}y_{ik} \leq \mathbf{v}_{ik} \leq \mathbf{x}^{u}y_{ik}\\
&\ \mathbf{x} \in \mathbb{R}^{n}\\
&\ \mathbf{v}_{ik} \in \mathbb{R}^{n}, \quad k \in K, i \in D_k\\
&\ y_{ik} \in \{0,1\}, \quad k \in K, i \in D_k.\\
\end{aligned} \tag{HR}
\end{equation}

We present the derivation of~\ref{prob:hr} in the Appendix, Section~\ref{sec:background.disjunct}.

In general, for GDP, no convexity assumptions are made for the functions $f, \mathbf{g}, \mathbf{h}_{ik}$ or the sets within the disjunctions $\mathcal{C}_{ik}$.
This means that the continuous relaxation of either~\ref{prob:bigm} or~\ref{prob:hr} might not have convex feasible regions.
We refer the interested reader to the review by Ruiz and Grossmann~\citep{ruiz2017global} that covers the techniques to solve these challenging optimization problems.

In order to use the theory from CP and Disjunctive programming, covered in Appendices~\ref{sec:background.cones} and~\ref{sec:background.disjunct}, respectively, we assume here that functions $f, \mathbf{g}, \mathbf{h}_{ik}$ are convex, hence the sets $\mathcal{C}_{ik}$ are convex too.
These are known as convex GDP problems~\citep{trespalacios2016cutting}.

For a literature review on GDP, we refer the reader to Grossmann and Ruiz~\citep{grossmann2012generalized}.

\section{Conic Generalized Disjunctive Programming}
\label{sec:cGDP}

The first step towards defining easily solvable convex MINLP problems via CP is to define a GDP with conic constraints.
As mentioned in Section~\ref{sec:background.cones}, we can use the tautological reformulation in~\eqref{eq:fascone} to write any convex GDP of form~\ref{prob:gdp} as follows:
\begin{equation}
\label{prob:gdpcone}
\begin{aligned}
\min_{\mathbf{x},\mathbf{Y}} &\ f(\mathbf{x}) \\
\textnormal{s.t. }&\ \mathbf{g}(\mathbf{x}) \leq \mathbf{0}\\
&\ \bigvee_{i\in D_k} \left[
    \begin{gathered}
    Y_{ik} \\
    \mathbf{A}_{ik}\mathbf{x} \succcurlyeq_{\mathcal{K}_{ik}} \mathbf{b}_{ik}\\
    \end{gathered}
\right], \quad  k \in K\\
&\ \Omega(\mathbf{Y}) = True \\
&\ \mathbf{x}^{l} \leq \mathbf{x} \leq \mathbf{x}^{u}\\
&\ \mathbf{x} \in \mathbb{R}^{n}\\
&\ Y_{ik} \in \{False, True\}, \quad k \in K, i \in D_k.\\
\end{aligned} \tag{GDP-Cone}
\end{equation}

Since the objective function $f(\mathbf{x})$ and the global constraints $\mathbf{g}(\mathbf{x}) \leq \mathbf{0}$ are convex, we can reformulate them to a conic program via~\eqref{eq:fascone} as in problem~\eqref{prob:MINLPcone}.
The sets defined within each disjunct
\begin{equation}
\label{eq:conicset}
P_{ik}: = \left \{ \mathbf{x}\in \mathbb{R}^n: \mathbf{A}_{ik}\mathbf{x} \succcurlyeq_{\mathcal{K}_{ik}} \mathbf{b}_{ik} \right \}
\end{equation}
are convex sets, where for every disjunct $\mathbf{A}_{ik} \in \mathbb{R}^{m_i \times n}$, $\mathbf{b}_{ik} \in \mathbb{R}^{m_i}$, and $\mathcal{K}_{ik}$ is a proper cone.

Although the derivation of specific solution algorithms for problem~\ref{prob:gdpcone} is a subject of active research, we focus on the reformulations of the given problem into Mixed-integer Programming problems.
These convex GDP problems can be reformulated into a convex ~\ref{prob:MINLP} problem, which in turn can be written down as a \ref{prob:MICP} problem.

The first trivial reformulation is the big-M reformulation, which yields the following problem:

\begin{equation}
\label{prob:bigmcone}
\begin{aligned}
\min_{\mathbf{x},\mathbf{y}} &\ f(\mathbf{x}) \\
\textnormal{s.t. }&\ \mathbf{g}(\mathbf{x}) \leq \mathbf{0}\\
    &\mathbf{A}_{ik}\mathbf{x} \succcurlyeq_{\mathcal{K}_{ik}} \mathbf{b}_{ik} + M_{ik}(1-y_{ik}), \quad k \in K, i \in D_k, \\
    &\sum_{i \in D_k} y_{ik} \leq 1, \quad k \in K \\
    & E\mathbf{y} \leq \mathbf{e}\\
&\ \mathbf{x}^{l} \leq \mathbf{x} \leq \mathbf{x}^{u}\\
&\ \mathbf{x} \in \mathbb{R}^{n}\\
&\ y_{ik} \in \{0,1\}, \quad k \in K, i \in D_k.\\
\end{aligned} \tag{Big-M-Cone}
\end{equation}

Contrary to the previous case, where the sets in the disjunctions were defined using scalar constraints, the tightest valid values for the big-M coefficients are given by $M_{ik} = \max_{\mathbf{x} \in P_{ik}} \mathbf{A}_{ik}\mathbf{x} - \mathbf{b}_{ik}$.
Notice that when $y_{ik}=1$, the original constraint within the disjunction is enforced, while when $y_{ik}=0$, the constrained is trivially satisfied as $\mathbf{A}_{ik}\mathbf{x} - \mathbf{b}_{ik} - M_{ik} \in \mathcal{K}_{ik} $.

To derive the HR of ~\ref{prob:gdpcone}, we need to characterize the convex hull of the disjunctive set~\eqref{eq:disjset} in the case that each convex and bounded set is defined using cones as in~\ref{eq:conicset}.

\begin{theorem}~\citep{lodi2019disjunctive}
\label{th:chullconic}
Let $\mathcal{P}_i = \left \{ \mathbf{x}\in \mathbb{R}^n: \mathbf{A}_{i}\mathbf{x} \succcurlyeq_{\mathcal{K}_{i}} \mathbf{b}_{i} \right \}$ for $i \in I$, where $\mathbf{A}_{i} \in \mathbb{R}^{m_i \times n}$, $\mathbf{b}_{i} \in \mathbb{R}^{m_i}$, and $\mathcal{K}_{i}$ is a proper cone, and let

\begin{equation}
\label{eq:chullconic}
\mathcal{P} = \left\{
\begin{aligned}
    &\mathbf{x} = \sum_{i \in I}\mathbf{v}_{i}, \\ 
    &\sum_{i \in I}\lambda_{i} = 1, \\
    &\mathbf{A}_{i}\mathbf{v}_{i} \succcurlyeq_{\mathcal{K}_{i}} \lambda_{i}\mathbf{b}_{i} ,  &i \in I, \\
    &\mathbf{v}_{i} \in \mathbb{R}^n ,  &i \in I, \\
    &\lambda_{i} \in \mathbb{R}_+ ,  &i \in I
\end{aligned}
\right \}.
\end{equation}

Then $\textnormal{conv}(\bigcup_{i \in I} \mathcal{P}_i) \subseteq \textnormal{proj}_{\mathbf{x}}(\mathcal{P})$ and:
\begin{enumerate}
    \item if $\mathcal{P}_i \neq \emptyset, \forall i \in I$, then $\textnormal{proj}_{\mathbf{x}}(\mathcal{P}) \subseteq \textnormal{cl conv}(\bigcup_{i \in I} \mathcal{P}_i)$
    \item if $\mathcal{P}_i = \mathcal{S}_i + \mathcal{W}, \forall i \in I$, where $\mathcal{S}_i, i \in I$ is a closed, bounded, convex, non-empty set and $\mathcal{W}$ is a convex closed set, then
    \begin{equation*}
        \textnormal{conv}\left(\bigcup_{i \in I} \mathcal{P}_i \right) = \textnormal{proj}_{\mathbf{x}}(\mathcal{P}) = \textnormal{cl conv} \left( \bigcup_{i \in I} \mathcal{P}_i \right).
    \end{equation*}
\end{enumerate}
\end{theorem}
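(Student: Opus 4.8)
The plan is to exploit the standard perspective (homogenization) correspondence underlying disjunctive-programming hull descriptions: for a fixed $i$, whenever $\lambda_i > 0$ the conic constraint $\mathbf{A}_i\mathbf{v}_i \succcurlyeq_{\mathcal{K}_i} \lambda_i\mathbf{b}_i$ is equivalent, upon dividing by $\lambda_i$, to $\mathbf{v}_i/\lambda_i \in \mathcal{P}_i$, since $\mathcal{K}_i$ is a cone. This single observation drives both directions. For the easy inclusion $\textnormal{conv}(\bigcup_{i\in I}\mathcal{P}_i) \subseteq \textnormal{proj}_{\mathbf{x}}(\mathcal{P})$, I would start from an arbitrary convex combination $\mathbf{x} = \sum_{i\in I}\lambda_i\mathbf{z}_i$ with $\mathbf{z}_i\in\mathcal{P}_i$, $\lambda_i\ge 0$, $\sum_i\lambda_i=1$, and set $\mathbf{v}_i := \lambda_i\mathbf{z}_i$. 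Then $\mathbf{x}=\sum_i\mathbf{v}_i$ and $\mathbf{A}_i\mathbf{v}_i-\lambda_i\mathbf{b}_i = \lambda_i(\mathbf{A}_i\mathbf{z}_i-\mathbf{b}_i)\in\mathcal{K}_i$ because the cone is closed under nonnegative scaling (the case $\lambda_i=0$ gives $\mathbf{v}_i=\mathbf{0}$ and $\mathbf{0}\in\mathcal{K}_i$). Hence $(\mathbf{x},\mathbf{v},\lambda)\in\mathcal{P}$, and this inclusion needs no hypotheses.

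The substance is in item 1, the reverse inclusion $\textnormal{proj}_{\mathbf{x}}(\mathcal{P})\subseteq\textnormal{cl conv}(\bigcup_i\mathcal{P}_i)$ under $\mathcal{P}_i\neq\emptyset$. Given $(\mathbf{x},\mathbf{v},\lambda)\in\mathcal{P}$, I would split $I=I_+\cup I_0$ according to $\lambda_i>0$ versus $\lambda_i=0$. For $i\in I_+$ the perspective identity gives $\mathbf{z}_i:=\mathbf{v}_i/\lambda_i\in\mathcal{P}_i$, so $\mathbf{v}_i=\lambda_i\mathbf{z}_i$. For $i\in I_0$ the constraint reduces to $\mathbf{A}_i\mathbf{v}_i\in\mathcal{K}_i$, which says that $\mathbf{v}_i$ is a recession direction of $\mathcal{P}_i$: fixing any $\bar{\mathbf{z}}_i\in\mathcal{P}_i$ (this is precisely where nonemptiness enters), the point $\bar{\mathbf{z}}_i+t\mathbf{v}_i$ lies in $\mathcal{P}_i$ for all $t\ge 0$, because $\mathbf{A}_i(\bar{\mathbf{z}}_i+t\mathbf{v}_i)-\mathbf{b}_i$ is a sum of two elements of $\mathcal{K}_i$.

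The main obstacle is then to realize $\mathbf{x}$ as a limit of genuine convex combinations, since the $I_0$-directions are reached only at infinity. I would introduce a small $\varepsilon>0$, reweight the masses as $\mu_i=(1-\varepsilon\lvert I_0\rvert)\lambda_i$ for $i\in I_+$ and $\mu_i=\varepsilon$ for $i\in I_0$ (so that $\sum_i\mu_i=1$), and pair each $i\in I_0$ with the feasible point $\bar{\mathbf{z}}_i+\varepsilon^{-1}\mathbf{v}_i\in\mathcal{P}_i$. A direct computation, using $\lambda_i\mathbf{z}_i=\mathbf{v}_i$ on $I_+$, shows the resulting convex combination equals $\mathbf{x}+\varepsilon\bigl(\sum_{i\in I_0}\bar{\mathbf{z}}_i-\lvert I_0\rvert\sum_{i\in I_+}\mathbf{v}_i\bigr)$, which converges to $\mathbf{x}$ as $\varepsilon\to 0^+$. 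As each such point belongs to $\textnormal{conv}(\bigcup_i\mathcal{P}_i)$, we conclude $\mathbf{x}\in\textnormal{cl conv}(\bigcup_i\mathcal{P}_i)$, and the closure is genuinely necessary here because of the recession contributions from $I_0$.

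Finally, for item 2 I would upgrade $\textnormal{cl conv}$ to $\textnormal{conv}$ by showing the latter is already closed under the hypothesis $\mathcal{P}_i=\mathcal{S}_i+\mathcal{W}$. Since each $\mathcal{S}_i$ is compact, one has $\textnormal{rec}(\mathcal{P}_i)=\textnormal{rec}(\mathcal{W})$ for every $i$, so all the sets share a common recession cone; by the classical result that the convex hull of a finite union of nonempty closed convex sets with a common recession cone is closed, $\textnormal{conv}(\bigcup_i\mathcal{P}_i)=\textnormal{cl conv}(\bigcup_i\mathcal{P}_i)$. Sandwiching this between the two inclusions already established forces $\textnormal{conv}(\bigcup_i\mathcal{P}_i)=\textnormal{proj}_{\mathbf{x}}(\mathcal{P})=\textnormal{cl conv}(\bigcup_i\mathcal{P}_i)$, the claimed equality. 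The only delicate points to verify carefully are the common-recession-cone closedness lemma and that the decomposition $\mathcal{S}_i+\mathcal{W}$ indeed produces identical recession cones across $i$.
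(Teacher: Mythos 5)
Your proof is correct. Note that the paper itself offers no argument for this theorem --- it simply defers to Proposition 2.3.5 of Ben-Tal and Nemirovski (and attributes the statement to Lodi et al.) --- so there is no internal proof to compare against; what you have written is a faithful, self-contained reconstruction of the standard homogenization argument underlying those references. All three ingredients are sound: the scaling $\mathbf{v}_i = \lambda_i\mathbf{z}_i$ for the easy inclusion (with $\mathbf{v}_i=\mathbf{0}$ and $\mathbf{0}\in\mathcal{K}_i$ covering the terms with $\lambda_i=0$, after the routine grouping step that lets one assume at most one point per $\mathcal{P}_i$ in the convex combination); the split into $I_+$ and $I_0$ for item 1, where your error term $\varepsilon\bigl(\sum_{i\in I_0}\bar{\mathbf{z}}_i-\lvert I_0\rvert\sum_{i\in I_+}\mathbf{v}_i\bigr)$ checks out, $I_+\neq\emptyset$ is guaranteed by $\sum_i\lambda_i=1$, and nonemptiness of each $\mathcal{P}_i$ is used exactly where you say it is (to anchor the recession directions); and the sandwich for item 2. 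The one external fact you lean on --- that the convex hull of a finite union of nonempty closed convex sets sharing a common recession cone is closed --- is precisely Rockafellar's Corollary 9.8.1, and the identification of $\textnormal{rec}(\mathcal{S}_i+\mathcal{W})$ with $\textnormal{rec}(\mathcal{W})$ for $\mathcal{S}_i$ compact and $\mathcal{W}$ closed is likewise classical, so the two ``delicate points'' you flag are genuinely standard and your argument is complete. The only cosmetic caveat is the degenerate case $\mathcal{W}=\emptyset$ in item 2, which the statement does not exclude but which renders everything empty and hence trivially equal.
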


\begin{proof}
See~\citep[Proposition 2.3.5]{ben2001lectures}.
\end{proof}
Using the characterization of the convex hull of the union of convex sets defined by cones that share the same recession cone ($\mathcal{W}$ in Theorem~\ref{th:chullconic}), we can define the HR of the~\ref{prob:gdpcone} as follows:

The formulation for the HR of a GDP is as follows:
\begin{equation}
\label{prob:hrcone}
\begin{aligned}
\min_{\mathbf{x},\mathbf{v},\mathbf{y}} &\ f(\mathbf{x}) \\
\textnormal{s.t. }&\ \mathbf{g}(\mathbf{x}) \leq \mathbf{0}\\
    & \mathbf{x} = \sum_{i \in D_k} \mathbf{v}_{ik}, \quad  k \in K \\
    &\mathbf{A}_{ik}\mathbf{v}_{ik} \succcurlyeq_{\mathcal{K}_{ik}} y_{ik}\mathbf{b}_{ik}, \quad  k \in K, i \in D_k\\
    &\sum_{i \in D_k} y_{ik} = 1, \quad  k \in K \\
    & E\mathbf{y} \leq \mathbf{e}\\
&\ \mathbf{x}^{l}y_{ik} \leq \mathbf{v}_{ik} \leq \mathbf{x}^{u}y_{ik}\\
&\ \mathbf{x} \in \mathbb{R}^{n}\\
&\ \mathbf{v}_{ik} \in \mathbb{R}^{n}, \quad  k \in K, i \in D_k\\
&\ y_{ik} \in \{0,1\}, \quad k \in K, i \in D_k.\\
\end{aligned} \tag{HR-Cone}
\end{equation}

This problem is of the form of~\ref{prob:MICP}, and more notably, uses the same cones within the disjunctions, $\mathcal{K}_{ik}$ in the extended formulation.
Contrary to problem~\ref{prob:hr}, problem~\ref{prob:hrcone} does not require an approximation of the perspective function.
Considering the HR reformulation as an optimization problem defined over convex cones allows exploiting the tight continuous relaxation of these problems while efficiently addressing the perspective reformulation's exact form.

To show several functions that appear in the normal context of convex MINLP that can be reformulated as the standard cones described in Section~\ref{sec:background.cones}, as well as their perspective function, we include Table~\ref{tab:reformulation}.
The conic representations in Table~\ref{tab:reformulation} are not unique and are given as a practical guide for implementing convex constraints using cones.
Notice that by applying the perspective reformulation, we recover the results found by several authors on stronger formulations for convex constraints activated through indicator variables.
Such examples include the epigraph of quadratic functions~\citep{gunluk2010perspective} and the epigraph of power functions with positive rational exponents~\citep{atamturk2018strong}.
The conic reformulation gives a natural and systematic procedure to perform extended reformulations~\citep{lubin2016extended}, which have been indicated to be helpful in solution methods for mixed-integer convex programs~\citep{tawarmalani2005polyhedral,hijazi2012mixed}.

To use the HR reformulation of GDP using conic constraints, it suffices to perform the take the perspective on its cones, i.e., for variables $\mathbf{z}$ defined over the cone $\mathcal{K}$ its perspective becomes $( y\frac{\mathbf{z}}{y}) \in \mathcal{K}$.
This has a considerable advantage, given that the HR reformulation is representable in the same cones as the ones used within the disjunctions.

\begin{sidewaystable}

\caption{Common convex constraints $\mathbf{h}(\mathbf{z}) \leq \mathbf{0}$ and perspective functions $\tilde{\mathbf{h}}(\mathbf{z},y) \leq \mathbf{0}$ with conic reformulation.}
\label{tab:reformulation}
\resizebox{\textwidth}{!}{%
\rowcolors{1}{}{lightgray}
\begin{tabular}{lllll}
\toprule
\multicolumn{1}{c}{$\mathbf{h}(\mathbf{z}) \leq \mathbf{0}$} &
  \multicolumn{1}{c}{$\mathbf{z} \in \mathcal{K}_{\mathbf{h}}$} &
  \multicolumn{1}{c}{$\tilde{\mathbf{h}}(\mathbf{z},y) \leq \mathbf{0}$} &
  \multicolumn{1}{c}{$(\mathbf{z},y) \in \mathcal{K}_{\tilde{\mathbf{h}}} = (y\frac{\mathbf{z}}{y}) \in \mathcal{K}_{\mathbf{h}}$} &
  \multicolumn{1}{c}{Notes} \\ \midrule
$A\mathbf{x} + \mathbf{b} \geq 0$ &
  $(A\mathbf{x} + \mathbf{b}) \in \mathbb{R}_+^m$ &
  $A\mathbf{x} + \mathbf{b}y \geq 0$ &
  $(A\mathbf{x} + \mathbf{b}y) \in \mathbb{R}_+^m$ &
  Union of polyhedra~\citep{balas1979disjunctive,balas2018disjunctive} \\
$x^2 - t \leq 0$ &
  $(0.5,t,x) \in \mathcal{Q}_r^3$ &
  $x^2 - ty \leq 0$ &
  $(0.5y,t,x) \in \mathcal{Q}_r^3$ &
   \\
$\mathbf{x}^\top\mathbf{x} - t \leq 0$ &
  $(0.5,t,\mathbf{x}) \in \mathcal{Q}_r^{n+2}$ &
  $\mathbf{x}^\top\mathbf{x} - ty \leq 0$ &
  $(0.5y,t,\mathbf{x}) \in \mathcal{Q}_r^{n+2}$ &
   \\
$1/x - t \leq 0, x > 0$ &
  $(x,t,\sqrt{2}) \in \mathcal{Q}_r^3$ &
  $y^2 - tx \leq 0, x > 0$ &
  $(x,t,\sqrt{2}y) \in \mathcal{Q}_r^3$ &
   \\
$\lvert x\rvert^p - t \leq 0, p > 1$ &
  $(t,1,x) \in \mathcal{P}_3^{1/p,1-1/p}$ &
  $\lvert x\rvert^p/y^{p-1} - t \leq 0, p > 1$ &
  $(t,y,x) \in \mathcal{P}_3^{1/p}$ &
   \\
$1/x^p - t \leq 0, x > 0, p > 1$ &
  $(t,x,1) \in \mathcal{P}_3^{1/(1+p),p/(1+p)}$ &
  $y^{1+p}/x^p - t \leq 0, x > 0, p > 1$ &
  $(t,x,y) \in \mathcal{P}_3^{1/(1+p)}$ &
   \\
$x^{a/b} - t \leq 0, x > 0, a \geq b > 0, a,b \in \mathbb{Z} $ &
  $(t,1,x) \in \mathcal{P}_3^{b/a,1-b/a}$ &
  $x^{a} - t^b y^{a-b} \leq 0, x > 0, a \geq b > 0, a,b \in \mathbb{Z} $ &
  $(t,y,x) \in \mathcal{P}_3^{b/a}$ &
  \begin{tabular}[c]{@{}l@{}}Cut strengthening for rational power\\ constraints with indicators\cite{akturk2009strong}\end{tabular} \\
$e^x - t \leq 0$ &
  $(t,1,x) \in \mathcal{K}_{exp}$ &
  $ye^{x/y} - t \leq 0$ &
  $(t,y,x) \in \mathcal{K}_{exp}$ &
   \\
$t- \log(x) \leq 0$ &
  $(x,1,t) \in \mathcal{K}_{exp}$ &
  $t- y\log(x/y) \leq 0$ &
  $(x,y,t) \in \mathcal{K}_{exp}$ &
   \\
$1/\log(x) - t \leq 0, x > 1$ &
  \begin{tabular}[c]{@{}l@{}}$(u,t,\sqrt{2}) \in \mathcal{Q}_r^3, $\\ $(x,1,u) \in \mathcal{K}_{exp}$\end{tabular} &
  $y/\log(x/y) - t \leq 0, x > 1$ &
  \begin{tabular}[c]{@{}l@{}}$(u,t,\sqrt{2}y) \in \mathcal{Q}_r^3, $\\ $(x,y,u) \in \mathcal{K}_{exp}$\end{tabular} &
   \\
$xe^x - t \leq 0, x \geq 0$ &
  \begin{tabular}[c]{@{}l@{}}$(0.5,u,x) \in \mathcal{Q}_r^3, $\\ $(t,x,u) \in \mathcal{K}_{exp}$\end{tabular} &
  $xe^{x/y} - t \leq 0, x \geq 0$ &
  \begin{tabular}[c]{@{}l@{}}$(0.5y,u,x) \in \mathcal{Q}_r^3, $\\ $(t,x,u) \in \mathcal{K}_{exp}$\end{tabular} &
   \\
$a_1^{x_1} \cdots a_n^{x_n} -t \leq 0, a_1 > 0$ &
  $(t,1,\sum_{i\in \llbracket n \rrbracket}x_i \log a_i) \in \mathcal{K}_{exp}$ &
  $ya_1^{x_1/y} \cdots a_n^{x_n/y} -t \leq 0, a_1 > 0$ &
  $(t,y,\sum_{i\in \llbracket n \rrbracket}x_i \log a_i) \in \mathcal{K}_{exp}$ &
   \\
$\log(1+e^x) - t \leq 0$ &
  \begin{tabular}[c]{@{}l@{}}$(u,1,x-t) \in \mathcal{K}_{exp}, $\\ $(v,1,-t) \in \mathcal{K}_{exp}, $\\ $u+v \leq 1$\end{tabular} &
  $y\log(1+e^{x/y}) - t \leq 0$ &
  \begin{tabular}[c]{@{}l@{}}$(u,y,x-t) \in \mathcal{K}_{exp}, $\\ $(v,y,-t) \in \mathcal{K}_{exp}, $\\ $u+v \leq y$\end{tabular} &
   \\
$-\log(1/(1+e^{-\theta^\top \mathbf{x}})) - t \leq 0$ &
  \begin{tabular}[c]{@{}l@{}}$(u,1,-\theta^\top \mathbf{x}-t) \in \mathcal{K}_{exp}, $\\ $(v,1,-t) \in \mathcal{K}_{exp}, $\\ $u+v \leq 1$\end{tabular} &
  $-y\log(1/(1+e^{-\theta^\top \mathbf{x}/y})) - t \leq 0$ &
  \begin{tabular}[c]{@{}l@{}}$(u,y,-\theta^\top \mathbf{x}-t) \in \mathcal{K}_{exp},$\\ $(v,y,-t) \in \mathcal{K}_{exp},$\\ $ u+v \leq y$\end{tabular} &
  Logistic cost function \\
$x\log(x) + t \leq 0$ &
  $(1,x,t) \in \mathcal{K}_{exp}$ &
  $x\log(x/y) + t \leq 0$ &
  $(y,x,t) \in \mathcal{K}_{exp}$ &
  Entropy and relative entropy~\cite{chandrasekaran2017relative} \\
$\log(1+1/x) - t \leq 0, x > 0$ &
  \begin{tabular}[c]{@{}l@{}}$(x+1,u,\sqrt{2}) \in \mathcal{Q}_r^3, $\\ $(1-u,1,t) \in \mathcal{K}_{exp}$\end{tabular} &
  $y\log(1+y/x) - t \leq 0, x, y > 0$ &
  \begin{tabular}[c]{@{}l@{}}$(x,y+x,u) \in \mathcal{K}_{exp}, $\\ $(x+y,x,v) \in \mathcal{K}_{exp}, $\\ $t+u+v=0$\end{tabular} &
   \\
$\lVert A\mathbf{x} + \mathbf{b}\rVert_2 - \mathbf{c}^\top \mathbf{x} + d \leq 0$ &
  $(\mathbf{c}^\top \mathbf{x} + d,A\mathbf{x} + \mathbf{b}) \in \mathcal{Q}^{m+1}$ &
  $\lVert A\mathbf{x} + \mathbf{b}\rVert_2 - \mathbf{c}^\top \mathbf{x} + dy$ &
  $(\mathbf{c}^\top \mathbf{x} + dy,A\mathbf{x} + \mathbf{b}) \in \mathcal{Q}^{m+1}$ &
  \begin{tabular}[c]{@{}l@{}}Robust constraint with \\ ellipsoidal uncertainty set~\cite{el1997robust}\end{tabular} \\
$\log \left( \sum_{i \in \llbracket n \rrbracket} e^{x_i} \right) - t \leq 0$ &
  \begin{tabular}[c]{@{}l@{}}$(u_i,1,x_i-t)\in \mathcal{K}_{exp}, i \in \llbracket n \rrbracket ,$\\ $\sum_{i \in \llbracket n \rrbracket}u_i \leq 1$\end{tabular} &
  $y\log \left( \sum_{i \in \llbracket n \rrbracket} e^{x_i/y} \right) - t \leq 0$ &
  \begin{tabular}[c]{@{}l@{}}$(u_i,y,x_i-t)\in \mathcal{K}_{exp}, i \in \llbracket n \rrbracket ,$\\ $\sum_{i \in \llbracket n \rrbracket}u_i \leq y$\end{tabular} &
  Log-sum-exp \\
$\lVert \mathbf{x} \rVert_1 - t= \sum_{i \in \llbracket n \rrbracket} \lvert x_{i} \rvert - t \leq 0$ &
  \begin{tabular}[c]{@{}l@{}}$(u_i,x_i)\in \mathcal{Q}^2, i \in \llbracket n \rrbracket ,$\\ $t= \sum_{i \in \llbracket n \rrbracket}u_i$\end{tabular} &
  $\lVert \mathbf{x}\rVert_1 - t= \sum_{i \in \llbracket n \rrbracket}\lvert x_i\rvert - t \leq 0$ &
  \begin{tabular}[c]{@{}l@{}}$(u_i,x_i)\in \mathcal{Q}^2, i \in \llbracket n \rrbracket ,$\\ $t= \sum_{i \in \llbracket n \rrbracket}u_i$\end{tabular} &
  $\ell_1$-norm epigraph \\
$\lVert \mathbf{x}\rVert_2 - t= (\sum_{i \in \llbracket n \rrbracket}x_i^2)^{1/2} - t \leq 0$ &
  $(t,\mathbf{x})\in \mathcal{Q}^{n+1}$ &
  $\lVert \mathbf{x}\rVert_2 - t= (\sum_{i \in \llbracket n \rrbracket}x_i^2)^{1/2} - t \leq 0$ &
  $(t,\mathbf{x})\in \mathcal{Q}^{n+1}$ &
  $\ell_2$-norm epigraph \\
$\lVert \mathbf{x}\rVert_p - t= (\sum_{i \in \llbracket n \rrbracket}x_i^p)^{1/p} - t \leq 0, p>1$ &
  \begin{tabular}[c]{@{}l@{}}$(u_i,t,x_i)\in \mathcal{P}_3^{1/p}, i \in \llbracket n \rrbracket ,$\\ $t= \sum_{i \in \llbracket n \rrbracket}u_i$\end{tabular} &
  $\lVert \mathbf{x}\rVert_p - t= (\sum_{i \in \llbracket n \rrbracket}x_i^p)^{1/p} - t \leq 0, p>1$ &
  \begin{tabular}[c]{@{}l@{}}$(u_i,t,x_i)\in \mathcal{P}_3^{1/p}, i \in \llbracket n \rrbracket ,$\\ $t= \sum_{i \in \llbracket n \rrbracket}u_i$\end{tabular} &
  $\ell_p$-norm epigraph \\
$\lVert \mathbf{x}\rVert_{-1} - t= n(\sum_{i \in \llbracket n \rrbracket}x_i^{-1})^{-1} - t \leq 0, \mathbf{x} > \mathbf{0}$ &
  \begin{tabular}[c]{@{}l@{}}$(u_i,x_i,t)\in \mathcal{Q}_r^3, i \in \llbracket n \rrbracket ,$\\ $nt/2= \sum_{i \in \llbracket n \rrbracket}u_i$\end{tabular} &
  $\lVert \mathbf{x}\rVert_{-1} - t= n(\sum_{i \in \llbracket n \rrbracket}x_i^{-1})^{-1} - t \leq 0, \mathbf{x} \geq \mathbf{0}$ &
  \begin{tabular}[c]{@{}l@{}}$(u_i,x_i,t)\in \mathcal{Q}_r^3, i \in \llbracket n \rrbracket ,$\\ $nt/2= \sum_{i \in \llbracket n \rrbracket}u_i$\end{tabular} &
  \begin{tabular}[c]{@{}l@{}}Harmonic mean \\ $\ell_{-1}$-norm epigraph\end{tabular} \\
$(\prod_{i \in \llbracket n \rrbracket}x_i)^{1/n} - \lvert t\rvert \leq 0, \mathbf{x} > \mathbf{0}$ &
  \begin{tabular}[c]{@{}l@{}}$(u_i,x_i,u_{i+1})\in \mathcal{P}_3^{1-1/i}, i \in \{2, \dots, n \}, $\\ $u_1=x_1, u_{n+1}=t$\end{tabular} &
  $(\prod_{i \in \llbracket n \rrbracket}x_i)^{1/n} - \lvert t\rvert \leq 0, \mathbf{x} > \mathbf{0}$ &
  \begin{tabular}[c]{@{}l@{}}$(u_i,x_i,u_{i+1})\in \mathcal{P}_3^{1-1/i}, i \in \{2, \dots, n \}, $\\ $u_1=x_1, u_{n+1}=t$\end{tabular} &
  Geometric mean \\
\begin{tabular}[c]{@{}l@{}}$\prod_{i \in \llbracket n \rrbracket}x_i^{\alpha_i} - \lvert t\rvert \leq 0, \mathbf{x} > \mathbf{0},$\\ $ \mathbf{\alpha} > \mathbf{0}, \sum_{i \in \llbracket l \rrbracket} \alpha_i = 1$\end{tabular} &
  \begin{tabular}[c]{@{}l@{}}$(u_i,x_i,u_{i+1})\in \mathcal{P}_3^{1-\beta_i}, $\\ $\beta_i = \alpha_i / (\sum_{j \in \llbracket i \rrbracket} \alpha_j), i \in \{2, \dots, l \}, $\\ $u_1=x_1, u_{l+1}=t$\end{tabular} &
  \begin{tabular}[c]{@{}l@{}}$\prod_{i \in \llbracket n \rrbracket}x_i^{\alpha_i} - \lvert t\rvert \leq 0, \mathbf{x} > \mathbf{0},$\\ $ \mathbf{\alpha} > \mathbf{0}, \sum_{i \in \llbracket n \rrbracket} \alpha_i = 1$\end{tabular} &
  \begin{tabular}[c]{@{}l@{}}$(u_i,x_i,u_{i+1})\in \mathcal{P}_3^{1-\beta_i}, $\\ $\beta_i = \alpha_i / (\sum_{j \in \llbracket i \rrbracket} \alpha_j), i \in \{2, \dots, l \}, $\\ $u_1=x_1, u_{l+1}=t$\end{tabular} &
  Weighted Geometric Mean \\ \midrule
\multicolumn{5}{l}{\footnotesize{\begin{tabular}[c]{@{}l@{}}Symbols $a,b,c,d,p$ stand for scalar parameters, $\alpha, \beta, \theta$ are vector parameters, $\mathbf{x} \in \mathbb{R}^n$ and $t \in \mathbb{R}$ are variables, and $u,v$ stand for additional variables added for the conic reformulation. $y \in [0,1]$ is the perspective variable. \\ This table is heavily influenced by the Conic Modeling Cleansheet~\cite{aps2018mosek}\end{tabular}}}
\end{tabular}
}
\end{sidewaystable}

\section{Computational results}
\label{sec:results}

The computational results in this manuscript include the comparison of different mixed-integer reformulations of GDP problems.
The sources of these GDP problems are applications in Process Systems Engineering (PSE) and Machine Learning (ML), besides some randomly generated instances to benchmark the different solution methods.
Each different reformulation was tackled using MINLP solvers.
All the problems were implemented in the General Algebraic Modeling Software GAMS~\citep{bussieck2004general} 28.2.
The solvers used for this comparison are
% ANTIGONE~\citep{misener2014antigone} 1.1,
BARON~\citep{tawarmalani2005polyhedral} 19.7, CPLEX~\citep{cplex} 12.9, 
and KNITRO~\citep{waltz2017knitro} 11.1 
% and SCIP~\citep{achterberg2009scip} 6.0
for convex MINLP.
We also use as a MICP solver MOSEK~\citep{aps2018mosek} 9.0.98, using two different algorithms implemented within it for solving relaxations of the conic problems, either an interior-point solution or through an outer-approximation approach (\texttt{MSK\_IPAR\_MIO\_CONIC\_OUTER\_APPROXIMATION} set as \texttt{MSK\_OFF} or \texttt{MSK\_ON}), denoted MOSEK-IP and MOSEK-OA, respectively.
Given the sophistication of these solvers, the effects of the different problem formulations can be shadowed by the use of preprocessing techniques, heuristics, and other performance enhancement strategies within them.
To better observe the performance difference given by the problem formulation, we use the Simple Branch \& Bound SBB~\citep{bussieck2001sbb} implementation in GAMS and solve the respective continuous subproblems using gradient-based interior-point NLP solver
% IPOPT~\citep{wachter2006implementation} 3.12 using the Harwell Subroutine Library (HSL) MA27~\citep{hsl2002collection} as a solver for linear systems,
KNITRO~\citep{waltz2017knitro} 11.1, 
and MOSEK~\citep{aps2018mosek} 9.0.98 for the conic subproblems.
All experiments were run on a single thread of an Intel® Xeon® CPU (24 cores) 2.67 GHz server with 128GB of RAM running Ubuntu.
The termination criteria were a time limit of 3600 seconds or a relative optimality gap of $\epsilon_{rel}=10^{-5}$.
Unless otherwise stated, the conic reformulation of the constraints was written explicitly, meaning that the auxiliary variables required by the reformulation were introduced to the problem directly.
This is a weakness identified in the CP interface in GAMS, where the conic structure identification is not made automatically.
Although trivial, the definition of the cones had to be done manually.

For all these GDP problems, we present the big-M and HR reformulations.
When necessary, the conic representations for both cases, i.e.,~\ref{prob:bigmcone} and~\ref{prob:hrcone}, are presented separately from the algebraic description, i.e.,~\ref{prob:bigm} and ~\ref{prob:hr}.
For the HR, we use the $\varepsilon$-approximation \eqref{eq:furman_persp} proposed in ~\citep{furman2020computationally}.
We denoted it as HR-$\varepsilon$.
We use the recommended value of $\varepsilon=10^{-4}$ for all the cases presented herein.
We also implemented the perspective function explicitly and the $\varepsilon$-approximation \eqref{eq:lee_persp}. 
However, our results indicated that, in general, solvers could handle the numerical challenges associated with the perspective function better using the approximation in~\eqref{eq:furman_persp}.
Hence, we do not include the results of the direct implementation of the perspective function or the approximation given by~\eqref{eq:lee_persp}.
Yet, the interested reader can find the complete results online \footnote{\url{https://bernalde.github.io/conic_disjunctive/}}.

The mixed-integer big-M and hull reformulations of some of these instances are present in the benchmarking libraries MINLPLib~\citep{bussieck2003minlplib} and MINLP.org~\citep{cmuibmminlp}.
They have been widely used for MINLP solver benchmarks~\citep{Bonami2008,kronqvist2019review,furman2020computationally,bernal2020improving}.
This applies in particular for the PSE applications, Constrained Layout (\texttt{CLay*}), Process Networks (\texttt{proc*}), and Retrofit Synthesis instances (\texttt{RSyn*} and \texttt{Syn*}), motivating the study on these well-known instances.

Moreover, there has been recent interest from the Machine Learning (ML) community in using rigorous methods for non-convex optimization, contrary to heuristics based on convex relaxations.
Even considering the performance cost of the rigorous methods, the optimal solution to the original non-convex optimization problem is informative and valuable within an ML framework~\citep{flaherty2019map}.
Finding the optimal values of the parameters of a probability distribution such that a likelihood estimator is maximized, i.e., training, is known as Expectation-Maximization (EM) in ML~\citep{chen2010demystified}.
When the data labels are incomplete, the general problem can be stated as learning from weakly labeled data~\citep{li2013convex}.
While performing the training, the assignment of the labels is naturally representable through disjunctions, giving rise to mixed-integer programs.
For example, there has been a recent interest in tackling the clustering problem using mixed-integer programming~\citep{flaherty2019map}.
Optimally guaranteed solutions to a problem similar to~\ref{prob:kmean} lead to better results measured by the performance of the ML model arising from the clustering compared to local-optimization approaches to the EM problem.
The ML instances on $k$-means clustering (\texttt{kClus*}) and logistic regression (\texttt{LogReg*}) are inspired by problems proposed in the literature but are randomly generated for this manuscript.

The following results are presented in two subsections, one considering ``quadratic'' problems that can be formulated using second-order and rotated second-order cones, and the second one with problems modeled through the exponential cone.
Each formulation includes linear constraints, which can be managed by both gradient-based and conic mixed-integer convex programming solvers.
All the results from this manuscript are available in an open-access repository.

We report the nodes required by each solver.
The definition of a node might vary for every solver, and a detailed description of each case is not widely available.
To better control these reports, we compare SBB as a central manager for the branching procedures.
In this last procedure, we can guarantee that each node is the solution to a continuous convex optimization problem.

\subsection{Quadratic problems}

The three families of instances presented herein are the Constrained Layout problem, a $k$-means clustering optimization problem, and randomly generated instances.
All these problems share the characteristic that the constraints within the disjunctions are representable via second-order and rotated second-order cones.

The mixed-integer reformulations of these problems were implemented as in~\ref{prob:bigm} and~\ref{prob:hr}, both the HR-$\varepsilon$ and HR-Cone.
Notice that in the case of the second-order cone, the explicit definition of the cone can be replaced by the inequality~\citep{gunluk2010perspective}
\begin{equation}
    x^2 -ty \leq 0 \iff \sqrt{(2x)^2 + (y-t)^2} \leq y + t,
\end{equation}
that avoids the variable multiplication $ty$ and improves the performance of gradient-based solvers like IPOPT and KNITRO.
When implementing this alternative to the exact representation of the perspective function, the performance of KNITRO improves slightly at the expense of a significant decrease in BARON's performance.
Therefore, the implementation results are left out of this manuscript, although they are included in the repository for reference.

The examples in this section had constraints in their disjunctions directly identified as a cone by MOSEK in the GAMS interface.
This allowed the big-M instances to be written in their algebraic form.
In general, this might not be the case, with the cones requiring an explicit description for MOSEK to process them.
Simultaneously, the HR reformulation required the explicit introduction of additional constraints for the conic form to be accepted by the GAMS-MOSEK interface.
CPLEX, on the other hand, can automatically identify and transform certain general quadratic constraints into general and rotated second-order cones.

Below, we present the examples considered convex quadratic GDPs.

\subsubsection{Constrained layout problem}

The constrained layout problem is concerned with the minimization of the connection costs among non-overlapping rectangular units.
These units need to be packed within a set of fixed circles.
% Figure \ref{fig:Clay} illustrates the constrained layout problem. 
It can be formulated as the following convex GDP~\citep{sawaya2006reformulations}:
\begin{equation}\label{eq:Clay}
  \begin{aligned}
 	\min_{\mathbf{\delta x},\mathbf{\delta y},\mathbf{x},\mathbf{y},\mathbf{W},\mathbf{Y}} &\sum_{i,j\in N} c_{ij}(\delta x_{ij}+\delta y_{ij})\\ 
 	\textnormal{s.t. } &\delta x_{ij} \ge x_i - x_j & i,j \in N, i < j \\
 	&\delta x_{ij} \ge x_j - x_i & i,j \in N, i < j \\
 	&\delta y_{ij} \ge y_i - y_j & i,j \in N, i < j \\
 	&\delta y_{ij} \ge y_j - y_i & i,j \in N, i < j \\
 	&\left[
 	\begin{gathered}
 	Y_{ij}^1 \\
 	x_i + L_i/2 \leq x_j - L_j/2
 	\end{gathered}
 	\right] \vee
 	\left[
 	\begin{gathered}
 	Y_{ij}^2 \\
 	x_j + L_j/2 \leq x_i - L_i/2
 	\end{gathered}
 	\right] \\
 	&\vee
 	\left[
 	\begin{gathered}
 	Y_{ij}^3 \\
 	y_i + H_i/2 \leq y_j - H_j/2
 	\end{gathered}
 	\right] \vee
 	\left[
 	\begin{gathered}
 	Y_{ij}^4 \\
 	y_j + H_j/2 \leq y_i - H_i/2
 	\end{gathered}
 	\right]    & i,j \in N, i < j \\
 	&\underset{t \in T}{\vee} 
 	\left[
 	\begin{gathered}
 	W_{it} \\
 	(x_i+L_i/2-xc_t)^2 + (y_i+H_i/2-yc_t)^2 \leq r_t^2 \\
 	(x_i+L_i/2-xc_t)^2 + (y_i-H_i/2-yc_t)^2 \leq r_t^2 \\
 	(x_i-L_i/2-xc_t)^2 + (y_i+H_i/2-yc_t)^2 \leq r_t^2 \\
 	(x_i-L_i/2-xc_t)^2 + (y_i-H_i/2-yc_t)^2 \leq r_t^2
 	\end{gathered}
 	\right] &i \in N \\
 	&Y_{ij}^1 \veebar Y_{ij}^2 \veebar Y_{ij}^3 \veebar Y_{ij}^4 & i,j \in N, i < j\\
 	&\underset{t \in T}{\veebar} W_{it} &i \in N \\
 	&0 \leq x_i \leq x_i^{u} &i \in N \\
 	&0 \leq y_i \leq y_i^{u} &i \in N \\
 	&\delta x_{ij}, \delta y_{ij} \in \mathbb{R}_+  & i,j \in N, i < j \\
 	&x_{i}, y_{i} \in \mathbb{R}  & i \in N \\
 	&Y_{ij}^1,Y_{ij}^2,Y_{ij}^3,Y_{ij}^4 \in \{False,True\}  & i,j \in N, i < j \\
 	&W_{it} \in \{False,True\}  & i \in N, t \in T \\
  \end{aligned},
\end{equation}
where the coordinate centers of each rectangle $i \in N$ are represented through variables $x_i, y_i$, the distance between two rectangles $i,j \in N, i < j$ is given by variables $\delta x_{ij}$ and $\delta y_{ij}$, and $c_{ij}$ is the cost associated with it.
The first disjunction allows for the non-overlapping of the rectangles, and the second one ensures that each rectangle is inside of one of the circles $t \in T$, whose radius is given by $r_t$ and center specified by coordinates $(xc_t, yc_t)$.

The constraints in the second disjunction are representable through quadratic cones as follows:
\begin{equation}
\begin{aligned}
    & (x_i \pm L_i/2-xc_t)^2 + (y_i \pm H_i/2-yc_t)^2 \leq r_t^2  \\ & \iff 
 	(r_t,x_i \pm L_i/2-xc_t,y_i \pm H_i/2-yc_t) \in \mathcal{Q}^3.
\end{aligned}
\end{equation}

Seven different problem instances are defined through the variation of the number of circular areas to fit in the rectangle $\lvert T \rvert$ and the number of possible rectangles $N$, each instance being denoted \texttt{CLay$\lvert$T$\rvert\lvert$N$\rvert$}.

\subsubsection{\texorpdfstring{$k$}{k}-means clustering}

The $k$-means clustering problem is an optimization problem that appears in unsupervised learning.
This problem minimizes the total distance of a set of points to the center of $k$ clusters, varying the center's position and the assignment of which center determines the distance to each point.
This problem is usually solved through heuristics without guarantees of the quality of the solution.

Recently, Papageorgiou and Trespalacios~\citep{papageorgiou2018pseudo} proposed a GDP formulation for the $k$-means clustering problem, also used in~\citep{kronqvist2021between}.
The problem formulation reads as follows:
\begin{equation}
\label{prob:kmean}
  \begin{aligned}
 	\min_{\mathbf{c},\mathbf{d},\mathbf{Y}} &\sum_{i \in N} d_{i}\\ 
 	\textnormal{s.t. } \quad  
 	&c_{k-1,1} \leq c_{k,1}, \quad k \in \{2,\dots, \lvert K \rvert\} \\
 	&\underset{k \in K}{\bigvee}
 	\left[
 	\begin{gathered}
 	Y_{ik} \\
 	d_i \geq \sum_{j \in D}(p_{ij} - c_{kj})^2
 	\end{gathered}
 	\right], \quad i \in N, k \in K \\
 	&\veebar_{k \in K} Y_{ik}, \quad i \in N\\
 	&\mathbf{d} \in \mathbb{R}_+^{ \lvert N \rvert}\\
 	&\mathbf{c} \in \mathbb{R}^{\lvert K \rvert \times \lvert D \rvert} \\
 	&Y_{ik} \in \{False,True\}, \quad i \in N, k \in K, \\
  \end{aligned}
\end{equation}
where $N$ is the set of points given in $\lvert D \rvert$ dimensions, whose coordinates are given by $\mathbf{p} \in \mathbb{R}^{\lvert N \rvert \times \lvert D \rvert}$.
The variables are the center coordinates $\mathbf{c}$, and the squared distances of each point to its closest center are denoted by $\mathbf{d}$, where the sum of its elements is minimized.
The first constraint is a symmetry-breaking constraint. An arbitrarily increasing ordering in the first dimension is taken for the centers.
The disjunctions determine with which center $k$ is the distance to point $i$ computed, given that $Y_{ik} = True$.

The constraint for each disjunction $i \in N, k \in K$ is naturally representable as a rotated second-order cone
\begin{equation}
\begin{aligned}
    & d_i \geq \sum_{j \in D}(p_{ij} - c_{kj})^2  & \iff 
 	(0.5,d_i,p_{i1} - c_{k1},\dots,p_{i \lvert D \rvert} - c_{k \lvert D \rvert}) \in \mathcal{Q}_r^{2+ \lvert D \rvert}.
\end{aligned}
\end{equation}

We vary the number of clusters $\lvert K \rvert \in \{3,5\}$, the number of given points $\lvert N \rvert \in \{10,20\}$, and the dimensions of those points $\lvert D \rvert \in \{2,3,5\}$ leading to instance \texttt{kClus\_$\lvert$K$\rvert$\_$\lvert$N$\rvert$\_$\lvert$D$\rvert$\_x}.
\texttt{x} in this case denotes one of the random instances generated.
For this problem, we include 10 instances for each case varying the point coordinates $p_{ij}$ within bounds $[l,u]$ by sampling the random uniform distributions $U[l,b]$ as follows $ p_{ij} \in U[0,1], i \in N, k \in K$.

\subsubsection{Random examples}

We generate random quadratic GDP problems to test further the reformulations proposed in this manuscript.
The random quadratic GDP problems are of the form,

\begin{equation}
\label{prob:randquad}
\begin{aligned}
\min_{\mathbf{x},\mathbf{Y}} &\ \mathbf{c}^\top \mathbf{x} \\
\textnormal{s.t. } 
&\ \bigvee_{i\in D_k} \left[
    \begin{gathered}
    Y_{ik} \\
    \sum_{j \in \llbracket n \rrbracket}\left( a^{'}_{ijk} x_j^2 + a^{''}_{ijk} x_j \right) + a^{'''}_{ik} \leq 1\\
    \end{gathered}
\right], \quad  k \in K\\
&\ \veebar_{i \in D_k} Y_{ik}, \quad  k \in K \\
&\ \mathbf{x}^{l} \leq \mathbf{x} \leq \mathbf{x}^{u}\\
&\ \mathbf{x} \in \mathbb{R}^{n}\\
&\ Y_{ik} \in \{False, True\}, \quad  k \in K, i \in D_k,\\
\end{aligned} \tag{SOCP-rand-GDP}
\end{equation}
where the lower and upper bounds of variables $\mathbf{x}$, $\mathbf{x}^{l}$ and $\mathbf{x}^{u}$, are set at -100 and 100, respectively.

The constraint in each disjunct is representable as a rotated second-order cone,
\begin{equation}
\begin{aligned}
    & \sum_{j \in \llbracket n \rrbracket}\left( a^{'}_{ijk} x_j^2 + a^{''}_{ijk} x_j \right) + a^{'''}_{ik} \leq 1 
    \\ & \iff
     	\left( 0.5,t,\sqrt{a^{'}_{ijk}}x_j,\dots,\sqrt{a^{'}_{ink}}x_{n} \right) \in \mathcal{Q}_r^{n+2};
     	t + \sum_{j \in D}a^{''}_{ijk} x_j + a^{'''}_{ik} \leq 1.
\end{aligned}
\end{equation}

The different random instances were generated by varying the number of disjunctions $\lvert K \rvert \in \{5,10\}$, the number of disjunctive terms at each disjunction $\lvert D_k \rvert \in \{5,10\}$, and the dimensions of the $\mathbf{x}$ variables $n \in \{5,10\}$ leading to instance 
\texttt{socp\_random\_$\lvert$K$\rvert$\_$\lvert$Dk$\rvert$\_n\_x}.
\texttt{x} denotes the index of the random variable generated.
10 instances are generated for each case, varying the parameters within bounds $[l,u]$ by sampling the random uniform distributions $U[l,b]$ as follows: $a^{'}_{ijk} \in U[0.01,1], a^{''}_{ijk} \in U[-1,1], a^{'''}_{ik} \in U[-1,1], c_j \in U[-1000,1000], i \in D_k, k \in K, j \in \llbracket n \rrbracket$.
We also include instances \texttt{socp\_random\_2\_2\_2\_x}, which represent the illustrative example in ~\citep{papageorgiou2018pseudo}.

Notice that the $k$-means clustering formulation is a particular case of these randomly generated GDPs.
In particular, if we set  $\mathbf{a^{'}}=1, \mathbf{a^{''}}=2\mathbf{p}, \mathbf{a^{'''}}=\mathbf{p}^\top \mathbf{p}, \mathbf{c}=1$ we recover the $k$-means clustering problem.

\subsubsection{Results}

We generate a total of 217 GDP problems, which are transformed through a~\ref{prob:bigm} and~\ref{prob:hr}, this last using both HR-$\varepsilon$ and HR-Cone.
The main results are presented in Table~\ref{tab:quadratic_solvers}, where the solution times and nodes for the~\ref{prob:bigm}, ~\ref{prob:hr}, and ~\ref{prob:hrcone} reformulations using different commercial solvers are included.
Consider that the HR-$\varepsilon$ formulation introduces non-linearities in the formulation, preventing CPLEX and MOSEK from addressing it.

In general, we can observe that CPLEX applied to the big-M reformulation has the best performance for the \texttt{CLay*} and \texttt{kClus*} instances when considering runtime.
BARON applied to the same big-M formulation returns the optimal solution with the least explored nodes for the constrained layout problems.
This shows how the mature solvers for mixed-integer programming have implemented useful preprocessing techniques, heuristics, and performance enhancements to work with big-M formulation; the ubiquity of these formulations among practitioners motivates their development of strategies to work with these problems efficiently.
An example is that CPLEX identifies the big-M formulation and internally treats its constraints through specialized branching rules derived from indicator constraints~\citep{cplex}\footnote{\href{https://www.ibm.com/docs/en/icos/12.9.0?topic=optimization-best-practices-indicator-constraints}{IBM documentation}}.

When comparing only the big-M formulation solution, BARON solves all the problems with the least number of nodes for all instances.
This corresponds to the main focus of BARON on solving more ``meaningful'' nodes for the problem. However, it might incur a performance cost~\citep{khajavirad2018hybrid}.
This observation also appears when comparing all the HR formulation results, where BARON required the fewest nodes.
This applied to the HR-$\varepsilon$ and the HR-Cone formulations.

In terms of runtime, when comparing the HR formulations, we observe that CPLEX is the fastest solver for the \texttt{CLay*} instances, while MOSEK-IP is the one for the \texttt{kClus*} and \texttt{socp\_random*} problems.
Notice that the best-performing solvers for these instances, CPLEX and MOSEK, can be applied to the HR-$\varepsilon$ formulation.
This shows that using a conic formulation of the HR problem opens the possibility of using solvers that can provide better performance by, e.g., better exploiting the problem structure.
Even for general nonlinear solvers, such as BARON, the conic reformulation provides a performance improvement, given that the lifted reformulation can be exploited for tighter relaxations within the solver~\citep{tawarmalani2005polyhedral}.
On the other hand, solvers based on nonlinear B\&B, where each node is solved with a general NLP algorithm, such as interior-point methods, e.g., KNITRO, can worsen their performance when using the conic reformulation.
The non-differentiability of the cones, together with the larger subproblem sizes, can cause such a negative impact.
This can be alleviated by taking advantage of the conic structure, something that KNITRO has implemented as part of their presolve capabilities~\citep{waltz2017knitro}\footnote{\href{https://www.or2018.be/workshops/2/dl_ws_pdf}{KNITRO v11 presentation}}.
These observations are supported by the performance difference of the HR-$\varepsilon$ formulation and the HR-cone when using BARON and KNITRO.
Comparing these formulations, BARON performs slightly better using the HR-cone formulation.
On the other hand, the performance significantly drops when using KNITRO with the HR-cone formulation compared to HR-$\varepsilon$.

A better view of the general performance of the different solvers is given in Figure~\ref{fig:quadratic}.
These figures present absolute performance profiles~\cite{bussieck2014paver} accounting for the number of problems solved to a given gap of the optimal solution (0.1\% in this case) within a time or node limit.
These absolute performance profiles will be used in the manuscript to highlight the difference between the reformulations and solvers when addressing the problems presented herein by reporting the absolute metric of effort to solve the problems (time or number of subproblems/nodes) without scaling them with respect to any of the solution alternatives; hence providing an unbiased report of the computational results.
In general, as seen in the node profile of Figure~\ref{fig:quadratic}, the performance concerning nodes is superior for all solvers when using the HR, except for BARON.
Given the tightness of this formulation, this is expected behavior.
Moreover, regarding solution time, both algorithms used in MOSEK improve their performance when using an HR compared to the big-M case.
This shows that when modeling disjunctive CP, the HR is preferable for this solver.
The other solvers worsen their performance when using the extended formulations regarding solution time.

Of the total 217 instances, the solver that solved the most instances to within 0.1\% of the best-known solution was MOSEK-IP with 191, both using the big-M and HR-Cone formulations.
The alternative that solved the fewest instances was KNITRO applied to the HR-Cone formulation, solving 160.

For more granular results and explanations of the different models, we have included the constrained layout and $k$-mean clustering examples in the \url{https://minlp.org/} website\footnote{\url{https://minlp.org/library/problem/index.php?i=306&lib=MINLP} and \url{https://minlp.org/library/problem/index.php?i=307&lib=MINLP}}.

\begin{figure}
    \centering
    \includegraphics[width=\textwidth]{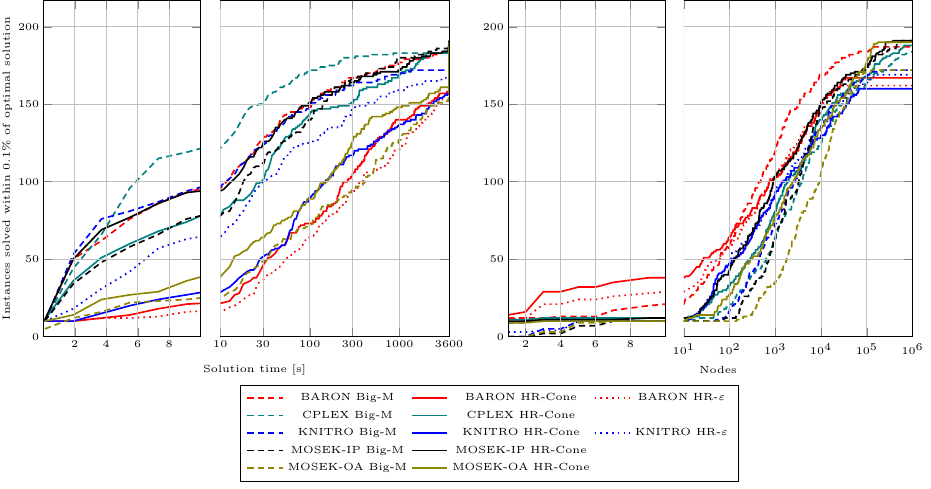}
    \caption{Time (left) and nodes (right) absolute performance profile for quadratic instances using the different GDP reformulations and commercial solvers.}
    \label{fig:quadratic}
\end{figure}

\subsection{Exponential problems}

As examples of problems representable using the exponential cone $\mathcal{K}_{exp}$, we present four families of problems: Process networks, Retrofit Synthesis Problems, Logistic Regression, and randomly generated instances.
The GAMS-MOSEK interface does not directly identify the exponential cone; therefore, we include algebraic and extended conic formulations,~\ref{prob:bigm} and ~\ref{prob:bigmcone}, respectively, for big-M.
\ref{prob:hr} and~\ref{prob:hrcone} are also tested for these problems, denoted as HR-$\varepsilon$ using the approximation in~\eqref{eq:furman_persp} and HR-Cone formulation through the extended formulation required by MOSEK for the exponential cones to be correctly identified.
The solver CPLEX was not used for these experiments since it cannot handle general nonlinear constraints beyond quadratics.

\subsubsection{Process Networks}

In the process network problem, we seek to maximize the profit from a process by deciding the equipment to be installed to fabricate some valuable product subject to material flows between the equipment pieces.
The total cost is computed from raw materials and equipment costs subtracted from the product's sales.
Alternative equipment pieces might induce a trade-off in terms of cost and production, defining the problem's constraints.
This classical problem in process design usually considers complex models describing each equipment piece.
For this simplified case~\citep{ruiz2012hierarchy,trespalacios2015improved}, we assume input-output correlations for each piece of equipment described by an exponential function.
This simplification still accounts for the non-linearity inherent to chemical processes.
The constraint considered here is a relaxation of the original equality constraints involving nonlinear terms, which is still valid given the direction of the optimization~\citep{ruiz2012hierarchy,trespalacios2015improved}.
% Figure \ref{fig:proc8} illustrates the superstructure for a process with potentially eight units presented by~\citep{kocis1989modelling}.
The problem can be modeled through the following convex GDP:
\begin{equation}
\label{prob:process}
  \begin{aligned}
 	\min_{\mathbf{c},\mathbf{x},\mathbf{Y}} &\sum_{k \in K}c_k + \sum_{j \in J}p_jx_j \\ 
 	\textnormal{s.t. } \quad &\sum\limits_{j \in J} r_{jn}x_j \leq 0, \quad  n \in N \\
 	&\bigvee_{i\in D_k} \left[
 	\begin{gathered}
 	Y_{ik} \\
 	\sum_{j \in J_{ik}} d_{ijk} (e^{x_j/t_{ijk}}-1)-\sum_{j \in J_{ik}}s_{ijk}x_j \leq 0 \\
 	c_k = \gamma_{ik}
 	\end{gathered}
 	\right], \quad k \in K\\
    &\veebar_{i \in D_k} Y_{ik}, \quad k \in K \\
 	&\Omega (\mathbf{Y})=True \\ 
 	&c_k, x_j \in \mathbb{R}_+, \quad j \in J_{ik}, i \in D_k, k \in K\\
 	&Y_{ik} \in \{False,True\}, \quad i \in D_k, k \in K.
  \end{aligned}
  \tag{Proc}
\end{equation}

In problem \ref{prob:process}, $c_k$ is the cost associated with the equipment chosen in disjunction $k \in K$.
The flow quantity $x_j$ is defined for each possible stream $j \in J$, with an associated profit.
The global mass balances are described for each node in the process $n \in N$ by the linear constraint $\sum_{j \in J} r_{jn}x_j \leq 0$, where $r_{jn}$ is the coefficient of the mass balance for flow $j$.
Each disjunction $k \in K$ presents the choice between $i \in D_k$ equipment alternatives.
When choosing each alternative ($Y_{ik} = True$), the corresponding input-output constraint in terms of the flows $j \in J_{ik}$ and parameters $d_{ijk}, t_{ijk}, s_{ijk} > 0$ is active, and the cost associated to that disjunction $c_k$ takes the value $\gamma_{ik}$.
The topology of the superstructure and extra logical constraints are included in $\Omega (\mathbf{Y})=True$.

% Using Figure~\ref{fig:proc8} as an example, there are two disjunctions $k \in \{ A,B \}$, where for the first one the set in the disjuncts $D_A = \{ 1,2 \}$, such that either equipment 1 ($Y_{1A}$) or equipment 2 ($Y_{2A}$) is chosen.
% Similarly $D_B = \{ 6,7 \}$.

An interesting alternative is where the sets $D_k$ yield a single element, and there is a Disjunction for every equipment piece.
This yields the following formulation:
\begin{equation}
\label{prob:processb}
  \begin{aligned}\min_{\mathbf{c},\mathbf{x},\mathbf{Y}} &\sum_{k \in K}c_k + \sum_{j \in J}p_jx_j \\  
 	\textnormal{s.t. } \quad &\sum_{j \in J} r_{jn}x_j \leq 0, \quad  n \in N \\
 	&\left[
 	\begin{gathered}
 	Y_{k} \\
 	\sum\limits_{j \in J_{k}} d_{jk} (e^{x_j/t_{jk}}-1)-\sum\limits_{j \in J_{k}}s_{jk}x_j \leq 0 \\
 	c_k = \gamma_{k}
 	\end{gathered}
 	\right] \vee
 	\left[
 	\begin{gathered}
 	\lnot Y_k \\
 	x_j = 0, \quad j \in J_k \\
 	c_k = 0
 	\end{gathered}
 	\right], \quad k \in K\\ 
 	&\Omega (\mathbf{Y})=True \\ 
 	&c_k, x_j \in \mathbb{R}_+, \quad j \in J_{k}, k \in K\\
 	&Y_{k} \in \{False,True\}, \quad k \in K.
  \end{aligned}
  \tag{Procb}
\end{equation}
This case allows several pieces of equipment to be built within each alternative as long as the objective is maximized.
The fact that it represents the disjunction of a convex set and a single point means that the HR formulation will yield the convex hull of the union of these sets without requiring an extended formulation~\citep[Corollary 1]{gunluk2010perspective}.

The exponential input-output constraint can be formulated in conic form as follows:
\begin{equation}
    \sum_{j \in J} d_{j} (e^{x_j/t_{j}}-1)-\sum_{j \in J}s_{j}x_j \leq 0 \iff
         \sum_{j \in J} d_{j} u_j-\sum_{j \in J}s_{j}x_j \leq 0;
         (t_j u_j + t_j, t_j, x_j) \in \mathcal{K}_{exp}.
\end{equation}

We include 5 variants of the process problem with $\lvert K \rvert \in \{ 21,31,36,48,100 \}$ possible units.
The first four cases are taken from~\citep{sawaya2006reformulations,ruiz2012hierarchy,trespalacios2016cutting}. This manuscript's last case was generated, given that commercial solvers can trivially solve the smaller cases.
The instances are denoted \texttt{process$\lvert$K$\rvert$} or \texttt{process$\lvert$K$\rvert$b} when implementing problems~\ref{prob:process} and~\ref{prob:processb}, respectively.
For the new instance, the parameters are chosen from the uniform distributions $d_{ijk} \in U[1,1.2], t_{ijk} \in U[1,1.3], s_{ijk} \in U[0.8,1.2],\gamma_{ik} \in U[2,3]$.

\subsubsection{Simultaneous Retrofit and Synthesis problems}

A generalization of the process network problem is the simultaneous retrofit and synthesis problem.
In this problem, an existing process network needs to be upgraded.
To do so, one can consider either installing new equipment or improving the existing one.
The potential of this process is to be maximized, given a budget constraint.
This problem was first proposed by Jackson and Grossmann~\citep{jackson2002high}, and its GDP implementation was done by Sawaya~\citep{sawaya2006reformulations}.
In the synthesis problem, the problem is equivalent to~\ref{prob:process} with an extra index for the time periods.
The retrofit synthesis problem contains additional linear constraints and disjunctions representing the conditions associated with retrofitting the existing process units.
The complete formulation is available in~\citep{furman2020computationally}.

The instances solved here are parametric to the number of synthesis processes $\lvert S \rvert \in \{5, 10, 15, 30, 40\}$, the number of retrofit units $\lvert R \rvert \in \{ 8 \}$ and the number of time periods considered $\lvert T \rvert \in {1, 2, 3, 4}$, leading to instances \texttt{Syn$\lvert$S$\rvert$M$\lvert$T$\rvert$} and \texttt{RSyn$\lvert$R$\rvert\lvert$S$\rvert$M$\lvert$T$\rvert$}.

\subsubsection{Logistic Regression}

Logistic regression is a training technique for binary classification.
In this training task, given a set of $D$-dimensional points $\mathbf{p}_i \in \mathbb{R}^{D}, i \in I$, we will assign a binary classifier $y \in \{ 0,1 \}$ to each point in the case that they lie above or below a hyperline given by $\mathbf{\theta}^\top \mathbf{p}_i$.
This line needs to be determined such that the logistic cost function is minimized.
The logistic cost function $\log(1/(1+e^{-\mathbf{\theta}^\top \mathbf{p}_i + \theta_0}))$ can be interpreted as the probability of a point belonging to the class given by $y=1$.
This problem can be modeled as a GDP by encoding the binary classifier $y$ in a Boolean variable $Y$ and writing the constraints within the disjunctions as follows:
\begin{equation}
\label{prob:logreg}
  \begin{aligned}\min_{\mathbf{\theta},\mathbf{t}} &\sum_{i \in I}t_i \\  
 	\textnormal{s.t. }  
 	&\left[
 	\begin{gathered}
 	Y_i \\
 	t_i \geq \log \left( 1+e^{-\mathbf{\theta}^\top \mathbf{p}_i + \theta_0} \right) \\
 	\mathbf{\theta}^\top \mathbf{p}_i \geq 0
 	\end{gathered}
 	\right] \vee
 	\left[
 	\begin{gathered}
 	\lnot Y_i \\
 	t_i \geq \log \left( 1+e^{\mathbf{\theta}^\top \mathbf{p}_i + \theta_0} \right) \\
 	\mathbf{\theta}^\top \mathbf{p}_i \leq 0
 	\end{gathered}
 	\right], \quad i \in I\\ 
 	&\Omega (\mathbf{Y})=True \\ 
 	&t_i \in \mathbb{R}_+, \quad i \in I\\
 	&\theta_0 \in \mathbb{R}\\
 	&\theta_j \in \mathbb{R}, \quad j \in \llbracket D \rrbracket\\
 	&Y_{k} \in \{False,True\}, \quad i \in I,
  \end{aligned}
  \tag{LogReg}
\end{equation}
where the logical constraints $\Omega (\mathbf{Y})=True$ can enforce symmetry-breaking constraints to help in the solution process or other additional constraints related to the regression task.

The logistic regression constraint can be expressed as the following conic inequality:
\begin{equation}
    t \geq \log \left( 1+e^{\mathbf{\theta}^\top \mathbf{p}_i + \theta_0} \right) \iff
         u + v \leq 1;
         x = \mathbf{\theta}^\top \mathbf{p}_i + \theta_0;
         (v,1,-t) \in \mathcal{K}_{exp};
         (u,1,x-t) \in \mathcal{K}_{exp},
\end{equation}
and equivalently for the complementary disjunction.

We generate ten random instances for each of the following settings in the examples presented herein.
We set the value of the points' dimensions within $D \in \{2,5,10\}$, $\lvert I \rvert = 20$, and we choose to generate 2 clusters of normally distributed points being at a Mahalanobis distance~\citep{de2000mahalanobis}, i.e., a distance metric between points and distributions, such that the points are at most $\sigma \in \{ 1, 2 \}$ standard deviations away from the center of the distributions.
This is computed via an inverse $\chi$-squared distribution with $D$ degrees of freedom calculated at probabilities \{0.68,0.95\} corresponding to Mahalanobis distances of $\sigma \in \{ 1, 2 \}$ in the one-dimensional case.
This distance is then divided in $2\sqrt{D}$, such that we place the centers of the distributions at opposite corners of the $D$-dimensional hypercube.
As mentioned in~\citep{flaherty2019map}, a natural advantage of the mathematical programming approach to the training tasks in ML, compared to the heuristics, is that additional constraints can be enforced through the problem formulation.
In this case, within $\Omega (\mathbf{Y})=True$, we force the split between the data points to be within 45\% and 55\% and also force the farthest two points in the set from the origin to belong to opposite classes as a symmetry breaking constraint.
Instances generated by this method are denominated \texttt{LogReg\_D\_$\lvert$I$\rvert$\_$\sigma$\_x}.

\subsubsection{Random examples}

Besides the applications-related instances listed above, we generate random instances whose disjunctive constraints can be represented using $\mathcal{K}_{exp}$.
The form of the GDP is:
\begin{equation}
\label{prob:randexp}
\begin{aligned}
\min_{\mathbf{x},\mathbf{Y}, z} &\ \mathbf{c}^\top \mathbf{x} \\
\textnormal{s.t. } 
&\ \bigvee_{i\in D_k} \left[
    \begin{gathered}
    Y_{ik} \\
    a^{'}_{ik} \exp{\sum_{j \in \llbracket n \rrbracket} a^{''''}_{ijk} x_j} \leq a^{''}_{ik} z + a^{'''}_{ik} \\
    \end{gathered}
\right], \quad  k \in K\\
&\ \veebar_{i \in D_k} Y_{ik}, \quad k \in K \\
&\ \mathbf{x}^{l} \leq \mathbf{x} \leq \mathbf{x}^{u}\\
&\ z \leq z^{u}\\
&\ \mathbf{x} \in \mathbb{R}^{n}\\
&\ z \in \mathbb{R}\\
&\ Y_{ik} \in \{False, True\}, \quad  k \in K, i \in D_k,\\
\end{aligned} \tag{EXP-rand-GDP}
\end{equation}
where the upper and lower bounds of variables $\mathbf{x}$, $\mathbf{x}^{l}$ and $\mathbf{x}^{u}$, are set at 0 and 10, respectively.
An upper bound for $z$ is given by
\begin{equation}
    z^u = \max_{i \in D_k, k \in K} \left[ \frac{a^{'}_{ik} \exp{\sum_{j \in \llbracket n \rrbracket} a^{''''}_{ijk} x_j^l} - a^{'''}_{ik}}{(a^{''}_{ik})^2} \right].
\end{equation}

The exponential constraint can be written equivalently as a logarithmic constraint and in a conic form as follows:
\begin{equation}
\begin{aligned}
    & a^{'}_{ik} \exp{\sum_{j \in \llbracket n \rrbracket} a^{''''}_{ijk} x_j} \leq a^{''}_{ik} z + a^{'''}_{ik} \\
    & \iff \log(a^{'}_{ik}) + \sum_{j \in \llbracket n \rrbracket} a^{''''}_{ijk} x_j \leq \log(a^{''}_{ik} z + a^{'''}_{ik}) \\
    & \iff 
         a^{'}_{ik} v_{ik} \leq a^{''}_{ik} z + a^{'''}_{ik};
         \left( v_{ik}, 1, \sum_{j \in \llbracket n \rrbracket} a^{''''}_{ijk} x_j  \right) \in \mathcal{K}_{exp}.
\end{aligned} 
\end{equation}

The generation of the random exponential GDPs uses the same parameters as the random quadratic GDPs, i.e., $\lvert K \rvert \in \{ 5, 10 \}, D_k \in \{ 5,10 \}$, and $n \in {5, 10}$.
Ten instances, denoted \texttt{exp\_random\_$\lvert$K$\rvert$\_$\lvert$Dk$\rvert$\_n\_x}, are generated for each combination, besides a simple case with \texttt{exp\_random\_2\_2\_2\_x} and the extra parameters are drawn from uniform distributions as $a^{'}_{ik} \in U[0.01,1], a^{''}_{ik} \in U[0.01,1], a^{'''}_{ik} \in U[0.01,1], a^{''''}_{ijk} \in U[0.01,1], c_j \in U[-1,-0.01], i \in D_k, k \in K, j \in \llbracket n \rrbracket$.

\subsubsection{Results}

We solve 208 GDP instances that are representable through the exponential cone.
These instances are transformed through big-M and HR.
Since the exponential cone is not automatically identified through the constraints defining it, the explicit description of the cone was required, giving rise to two different versions of each reformulation.
The big-M results are summarized in Table~\ref{tab:exp_solvers_bm}, and the HR results are included in Table~\ref{tab:exp_solvers_hr} in the Appendix.

Depending on the family of instances, a given combination of solver and reformulation was the best in runtime.
For the \texttt{LogReg*} and \texttt{RSyn*} instances, MOSEK-OA was the best solver when applied to the HR-Cone formulation.
The other algorithm for MOSEK, MOSEK-IP, was the best performance solver for the \texttt{proc*} instances, with the outstanding solution of the \texttt{proc\_100} problems in less than 5 seconds when most other approaches could not solve it within the 1-hour time limit.
The closest non-conic approach was BARON applied to the original big-M formulation. A $\approx$ 80x and 6x speedup was obtained with instances \texttt{proc\_100} and \texttt{proc\_100b}, respectively.
BARON applied to the big-M formulation was the best among all solvers for the \texttt{Syn*} instances.
This approach was the fastest for the \texttt{exp\_random*} instances, with a similar performance when applied to the big-M-cone formulation.
This was not the case in general, where the conic formulation of the big-M problem led to considerable performance degradation for BARON when solving the \texttt{LogReg*} and \texttt{Syn*} instances.
When considering the big-M-cone formulation, we see that both KNITRO and MOSEK-IP time out for most instances.

When considering the HR, using a conic formulation severely affected the performance of BARON and KNITRO.
This was a sign of the challenges that gradient-based methods encounter when facing exponential constraints such as the ones appearing in the conic reformulation.
For example, in instance \texttt{RSyn0805M02}, the HR-Cone formulation led to KNITRO failing to evaluate the gradients at every B\&B node, given numerical instability by the evaluation of exponential functions.
BARON, running its default version with a dynamic NLP subsolver selection, could not find a solution to this problem either. At the same time, a solver that takes advantage of the exponential cone, such as MOSEK, solved the problem in 2 seconds.

As with the quadratic instances, the most efficient solver in terms of nodes explored to find the optimal solution is BARON, both in the big-M and HR.

Absolute performance profiles are presented in Figure~\ref{fig:exponential} for the exponential instances.
In the time absolute performance profile in Figure~\ref{fig:exponential}, we observe a clear dominance of both MOSEK algorithms applied to the HR-Cone formulation, particularly within the first seconds.
Towards the end of the time limit, BARON applied to both the big-M and HR formulations, solving more instances to optimality.
BARON applied to the HR-$\varepsilon$ approximation can solve all the exponential problems within the time limit.
Except for BARON, all solvers improve their performance when comparing the big-M and HR formulations.
Having mentioned that, BARON and KNITRO have difficulties solving the HR-Cone formulation, with the extreme case of BARON failing in all instances.

When observing the node absolute performance profile for the exponential instances in Figure~\ref{fig:exponential}, the HR formulations require fewer nodes than the big-M formulations, except for BARON.
BARON proves that it generates strong relaxation nodes, requiring fewer to solve the problems, clearly dominating in this sense the other solvers.
A similar observation was made regarding the quadratic instances.

\begin{figure}
    \centering
    \includegraphics[width=\textwidth]{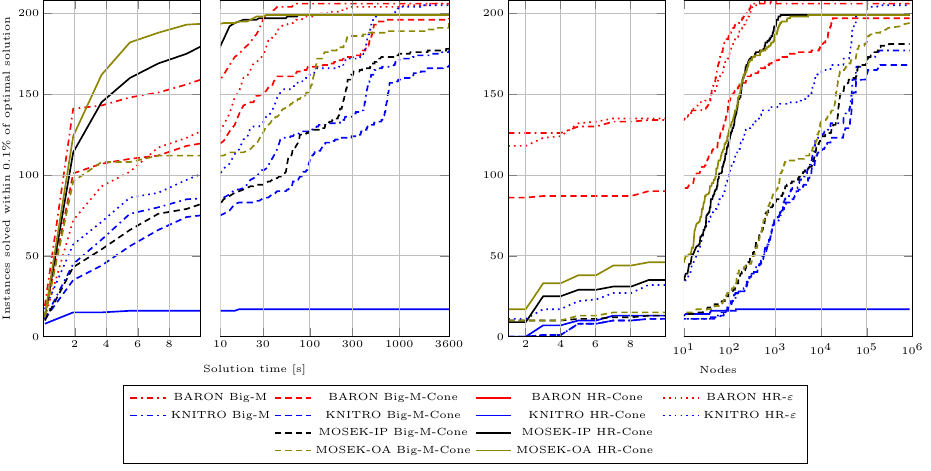}
    \caption{Time (left) and Nodes (right) absolute performance profile for exponential instances using the different GDP reformulations and commercial solvers.}
    \label{fig:exponential}
\end{figure}

\subsection{Controlling the Branch \& Bound search}

The implementations of modern solvers include an arsenal of preprocessing techniques, heuristic methods, and performance enhancements to tackle the challenging optimization problems at hand more efficiently.
Although this leads to performance improvements, it obscures the effect of better formulations when solving optimization problems.
To that end, we consider using the Simple Branch\& Bound (SBB) implementation in GAMS and solve the subproblems using both KNITRO and MOSEK.
These subproblems are continuous optimization problems, while SBB manages the discrete variables' exploration.
We present below two absolute performance profiles in Figure~\ref{fig:sbb} for all the problems solved in this manuscript, mainly including results of SBB-KNITRO and SBB-MOSEK.

In Figure~\ref{fig:sbb}, we observe the absolute performance profiles of the SBB implementation against the number of continuous convex subproblems solved.
The first observation is that the HR tight formulation allows a more efficient exploration of the subproblems solved than the big-M formulation.
The conic formulation of HR affects the performance of KNITRO when addressing the subproblems, leading to poor performance in this case.
Moreover, given the same branching rules, the big-M and HR formulations require approximately the same number of subproblems solved using the original or the extended formulations arising from the conic description of the problems.
This is an expected result, given that the extended formulation does not require additional binary variables.

Although the number of solved subproblems is similar, the time required to solve them varies depending on the chosen solver, as observed in Figure~\ref{fig:sbb}.
This figure includes the time absolute performance profiles for the SBB alternatives.
For reference, we include the best commercial alternative to each reformulation.
This corresponds to BARON for the big-M and HR-$\varepsilon$ and MOSEK-IP for the HR-Cone formulations.
The solver that solved the most instances was BARON applied to the big-M formulation, solving 393 out of the 425 problems, followed by MOSEK-IP applied to the HR-Cone formulation, solving 390 problems.
MOSEK is generally more efficient at solving the convex subproblems than KNITRO.
The difference is exacerbated in the HR formulation.
An interesting observation is that the gap in time performance between SBB and the best alternative is smaller for HR-$\varepsilon$ than for HR-Cone.
This indicates that the efficient exploitation of the conic constraints, in this case from MOSEK, can yield considerable performance advantages together with a tight reformulation of disjunctive constraints.

% \begin{sidewaystable}
% \caption{Results for Quadratic GDPs using different mixed-integer reformulations and solvers through SBB branching. The results corresponding to the solver with the least time and the fewest nodes for each instance with a reformulation are italized. The best results overall are bolded.}
% \label{tab:quadratic_sbb}
% \resizebox{\textwidth}{!}{%
% \input{tables/quadratic_sbb}
% }
% \end{sidewaystable}

\begin{figure}
    \centering
    \includegraphics[width=\textwidth]{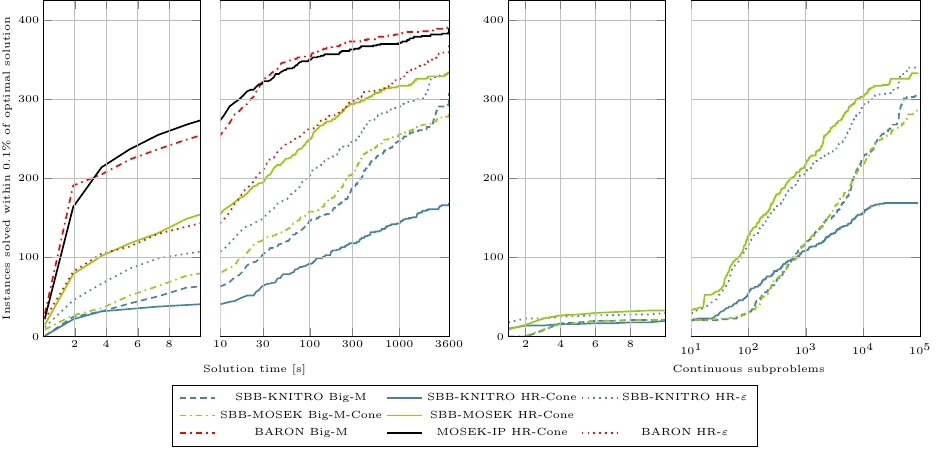}
    \caption{Time (left) and Solved subproblems (right) absolute performance profile for all instances using the different GDP reformulations and solvers through SBB. For the time profiles we include the best performing commercial solver results for each reformulation.}
    \label{fig:sbb}
\end{figure}

% \begin{figure}
%     \centering
%     \includegraphics[width=\textwidth]{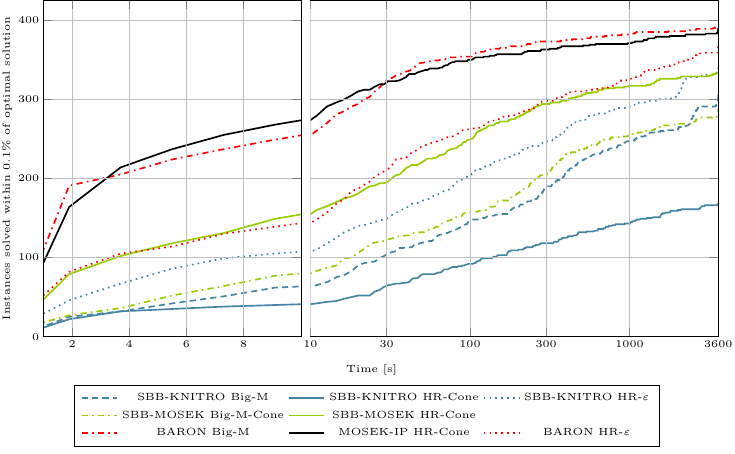}
%     \caption{Time performance profile for all instances using the different GDP reformulations and solvers through SBB. We include the best-performing commercial solver results for each reformulation.}
%     \label{fig:sbb_time}
% \end{figure}

\section{Conclusions, discussion, and future work}
\label{sec:conclusions}

This work presents the formulation of convex Generalized Disjunctive Programming (GDP) problems using conic sets.
Convex GDP problems can be solved by reformulating them into convex Mixed-Integer Nonlinear Programming (MINLP) problems.
Two of those reformulations are covered in this manuscript: the big-M and hull reformulations.
The hull reformulation (HR) of a convex GDP problem requires implementing a perspective function, whose algebraic form is challenging for gradient-based nonlinear optimization solvers.
We present the big-M and hull reformulations into Mixed-Integer Conic Programming (MICP) problems through the conic formulation of the problem.
The MICP problems can be efficiently tackled using specialized conic solvers, which take advantage of the properties of the conic programs.
We provide a guide to reformulating common convex constraints through conic programming.
If those constraints appear inside disjunctions, we also provide a conic representation of its perspective, allowing the exact representation of the HR.

These reformulations were tested using a large set of convex GDP problems stemming from Process Systems Engineering, Machine Learning, and randomly generated instances.
These instances were classified as quadratic and exponential and solved through different reformulation alternatives and solvers.
Our results show how the conic reformulation gives a systematic and natural extended formulation of the convex MINLP problems stemming from GDP.
These can be exploited by solvers, allowing a more efficient solution to these problems.
Among the tested approaches, we identified that BARON solving the big-M formulation and MOSEK solving the HR-Conic formulation, either with IP or OA, were the most efficient solvers to tackle these convex GDP reformulated problems.
In general, we show how the conic representation of convex constraints within disjunctions can result in an exact and more efficiently solvable mixed-integer reformulation of a convex GDP.

The results in this paper also point to specific improvement opportunities.
First, the automatic reformulation of the convex constraints into cones is worth pursuing.
Previous success in the quadratic case allows commercial solvers such as CPLEX or Gurobi to automatically detect conic structures and address those more efficiently.
An extension of these routines to exponential cones is of interest.
Modeling extensions that allow for disjunctive programming are the natural place to include these automatic reformulations.
Approaches have been made at the modeling language level, e.g., in GAMS~\citep{vecchietti1999logmip} Pyomo~\citep{chenpyomo}~\footnote{\url{https://pyomo.readthedocs.io/en/latest/modeling_extensions/gdp/}}, and Julia~\footnote{\url{https://github.com/rdeits/ConditionalJuMP.jl}, \url{https://github.com/hdavid16/DisjunctiveProgramming.jl}}.
These could also be made at the solver level, with indicator constraints such as in CPLEX~\citep{cplex} and MOSEK~\citep{aps2018mosek}.
These techniques have also shown potential for the global optimization of non-convex GDP or MINLP~\citep{mahajan2010exploiting}, motivating further research into it.

We also highlight the effort made by the modeling layer in Julia, MathOptInterface~\cite{legat2021mathoptinterface}, that supports conic modeling in the form $\mathbf{A}\mathbf{x} - \mathbf{b} \in \mathcal{K}$ with $\mathcal{K}$ being one of many supported cones, including the ones covered in this manuscript.
By natively supporting conic modeling, it provides an alternative to solvers to detect these structures from scalar inequalities automatically.

Interesting future directions are the exploration of conic formulations in more advanced reformulations of GDPs, such as intermediate big-M / hull reformulations~\citep{kronqvist2021between} and basic steps reformulations~\citep{ruiz2012hierarchy}.
Moreover, conic programming tools can be used in more advanced solution methods of GDP than recasting the problem into MINLP.
Examples of those methods are Lagrangean decomposition based on the disjunctive structure of the problem~\citep{papageorgiou2018pseudo} or logic-based algorithms~\citep{chenpyomo}.
The use of conic programming has already shown the potential speedup for mixed-integer programming solutions~\citep{coey2020outer}, and expanding those findings to GDP is of great interest.

\section*{Acknowledgments}
The authors gratefully acknowledge financial support from the Center of Advanced  Process Decision-making and from the US Department of Energy, Oﬃce of Fossil Energy’s Crosscutting Research, Simulation Based Engineering Program through the Institute for the Design of Advanced Energy Systems (IDAES).

\bibliographystyle{unsrtnat}
\bibliography{conic_refs}

\appendix
\section{Background}
\label{sec:background}

In this manuscript, we use a similar notation to the one used by Ben-Tal and Nemirowski~\citep{ben2001lectures} and Alizadeh and Goldfarb~\citep{alizadeh2003second}.
We use lowercase boldface letters, e.g., $\mathbf{x,c}$, to denote column vector, and uppercase boldface letters, e.g., $\mathbf{A}, \mathbf{X}$, to denote matrices.
Sets are denoted with uppercase calligraphic letters, e.g., $\mathcal{S}, \mathcal{K}$.
Subscripted vectors denote $\mathbf{x_i}$ denote the $i^{th}$ block of $\mathbf{x}$.
The $j^{th}$ component of the vectors $\mathbf{x}$ and $\mathbf{x_i}$ are indicated as $x_j$ and $x_{ij}$.
The set $\{1,\dots,J\}$ is represented by the symbol $\llbracket J \rrbracket$.
Moreover, the subscript $\llbracket J \rrbracket$ of a vector $\mathbf{x}$ is used to define the set $\mathbf{x}_{\llbracket J \rrbracket} := \{\mathbf{x}_1,\dots,\mathbf{x}_J\}$.
We use $\mathbf{0}$ and $\mathbf{1}$ for the all zeros and all ones vector, respectively, and $0$ and $I$ for the zero and identity matrices, respectively.
The vector $e_j$ will be the vector with a single $1$ in position $j$, and its remaining elements are 0.
The dimensions of the matrices and vectors will be clear from the context.
We use $\mathbb{R}^k$ to denote the set of real numbers of dimension $k$, and for set $\mathcal{S} \subseteq \mathbb{R}^k$, we use $\textnormal{cl}(\mathcal{S})$ and $\textnormal{conv}(\mathcal{S})$ to denote the closure and convex hull of $\mathcal{S}$, respectively.

For concatenated vectors, we use the notation that ``,'' is row concatenation of vectors and matrices, and ``;'' is column concatenation.
For vectors, $\mathbf{x,y}$ and $\mathbf{z}$, the following are equivalent.
\begin{equation}
    \begin{pmatrix}
           \mathbf{x} \\
           \mathbf{y} \\
           \mathbf{z}
         \end{pmatrix} = (\mathbf{x}^{\top},\mathbf{y}^{\top},\mathbf{z}^{\top})^{\top} = (\mathbf{x};\mathbf{y};\mathbf{z}).
\end{equation}

The projection of a set $\mathcal{S} \subseteq \mathbb{R}^k$ onto the vector $\mathbf{x} \in X \subseteq \mathbb{R}^n$, with $n \leq k$ is denoted as $\textnormal{proj}_{\mathbf{x}}(\mathcal{S}) := \{ \mathbf{x} \in X : \exists \mathbf{y}: (\mathbf{x};\mathbf{y}) \in \mathcal{S} \}$.

If $\mathcal{A} \subseteq \mathbb{R}^k$ and $\mathcal{B} \subseteq \mathbb{R}^l$ we denote their Cartesian product as $\mathcal{A} \times \mathcal{B} := \{(\mathbf{x};\mathbf{y}): \mathbf{x} \in \mathcal{A}, \mathbf{y} \in \mathcal{B} \}$.

For $\mathcal{A}_1, \mathcal{A}_2 \subseteq \mathbb{R}^k$ we define the Minkowski sum of the two sets as $\mathcal{A}_1 + \mathcal{A}_2 = \{ \mathbf{u} + \mathbf{v}:  \mathbf{u} \in \mathcal{A}_1, \mathbf{v} \in \mathcal{A}_2  \}$.

\subsection{Cones}
\label{sec:background.cones}
For a thorough discussion about convex optimization and conic programming, we refer the reader to~\citep{ben2001lectures}.
The following definitions are required for the remainder of the manuscript.

The set $\mathcal{K} \subseteq \mathbb{R}^k$ is a cone if $\forall(\mathbf{z},\lambda) \in \mathcal{K} \times \mathbb{R}_+, \lambda \mathbf{z} \in \mathcal{K}$.
The dual cone of $\mathcal{K} \subseteq \mathbb{R}^k$ is
\begin{equation}
    \mathcal{K}^* = \left\{ \mathbf{u} \in \mathbb{R}^k : \mathbf{u}^T\mathbf{z} \geq \mathbf{0}, \forall \mathbf{z} \in \mathcal{K} \right\},
\end{equation}
and it is self-dual if $\mathcal{K} = \mathcal{K}^*$.
The cone is pointed if $\mathcal{K} \cap (-\mathcal{K}) = \{ \mathbf{0} \}$.
A cone is proper if it is closed, convex, pointed, and with a non-empty interior.
If $\mathcal{K}$ is proper, then its dual $\mathcal{K}^*$ is proper too.
$\mathcal{K}$ induces a partial order on $\mathbb{R}^k$:
\begin{equation}
    \mathbf{x} \succcurlyeq_{\mathcal{K}} \mathbf{y} \iff \mathbf{x} - \mathbf{y} \in \mathcal{K},
\end{equation}
which allows us to define a conic inequality as
\begin{equation}
    \mathbf{A}\mathbf{x} \succcurlyeq_{\mathcal{K}} \mathbf{b},
\end{equation}
where $\mathbf{A} \in \mathbb{R}^{m \times k}$, $\mathbf{b} \in \mathbb{R}^m$, and $\mathcal{K}$ a cone.

When using a cone that represents the cartesian product of others, i.e., $\mathcal{K} = \mathcal{K}_{n_1} \times \cdots \times \mathcal{K}_{n_r}$ with each cone $\mathcal{K}_{n_i} \subseteq \mathbb{R}^{n_i}$, its corresponding vectors and matrices are partitioned conformally, i.e.,
\begin{equation}
    \begin{aligned}
        & \mathbf{x} = (\mathbf{x_1}; \dots ; \mathbf{x_r}) & \textnormal{ where } \mathbf{x_i} \in \mathbb{R}^{n_i}, \\
        & \mathbf{y} = (\mathbf{y_1}; \dots ; \mathbf{y_r}) & \textnormal{ where } \mathbf{y_i} \in \mathbb{R}^{n_i}, \\
        & \mathbf{c} = (\mathbf{c_1}; \dots ; \mathbf{c_r}) & \textnormal{ where } \mathbf{c_i} \in \mathbb{R}^{n_i}, \\
        & \mathbf{A} = (\mathbf{A}_1; \dots ; \mathbf{A}_r) & \textnormal{ where } \mathbf{A} \in \mathbb{R}^{m \times n_i}.
    \end{aligned}
\end{equation}
Furthermore, if each cone $\mathcal{K}_{n_i} \subseteq \mathbb{R}^{n_i}$ is proper, then $\mathcal{K}$ is proper too.

A Conic Programming (CP) problem is then defined as:
\begin{equation}
	\label{prob:CP}
	\tag{CP}
	\begin{aligned}
  &\min_{\mathbf{x}} &&\mathbf{c}^\top \mathbf{x}\\
	& \textnormal{s.t. } &&\mathbf{A}\mathbf{x} = \mathbf{b},\\
	& &&\mathbf{x} \in \mathcal{K} \subseteq \mathbb{R}^k.
	\end{aligned}
\end{equation}

Examples of proper cones are:
\begin{itemize}
    \item The nonnegative orthant
    \begin{equation}
        \mathbb{R}_+^k = \left \{ \mathbf{z} \in \mathbb{R}^k: \mathbf{z} \geq \mathbf{0} \right \}.
    \end{equation}
    \item The positive semi-definite cone
    \begin{equation}
        \mathbb{S}_+^k = \left \{ Z \in \mathbb{R}^{k \times k}: Z = Z^T, \lambda_{min}(Z) \geq 0 \right \},
    \end{equation}
    where $\lambda_{min}(Z)$ denotes the smallest eigenvalue of $Z$.
    \item The second-order cone, Euclidean norm cone, or Lorentz cone
    \begin{equation}
        \mathcal{Q}^k = \left \{ \mathbf{z} \in \mathbb{R}^{k}: z_1 \geq \sqrt{\sum_{i=2}^k z_i^2} \right \}.
    \end{equation}
    \item The exponential cone~\citep{chares2009cones}
    \begin{equation}
    \begin{aligned}
        \mathcal{K}_{exp} &= \textnormal{cl} \left \{ (z_1,z_2,z_3) \in \mathbb{R}^3: z_1 \geq z_2 e^{z_3 / z_2}, z_1 \geq 0, z_2 > 0 \right \} \\ &= \left \{ (z_1,z_2,z_3) \in \mathbb{R}^3: z_1 \geq z_2 e^{z_3 / z_2}, z_1 \geq 0, z_2 > 0 \right \} \bigcup \mathbb{R}_+ \times \{ \mathbf{0} \} \times (-\mathbb{R}_+)  \\
        &= \left \{ (z_1,z_2,z_3) \in \mathbb{R}^3: z_1 \geq z_2 e^{z_3 / z_2}, z_2 \geq 0 \right \}.
    \end{aligned}
    \end{equation}
\end{itemize}
Of these cones, the only one not being self-dual or symmetric is the exponential cone.

Other cones that are useful in practice are
\begin{itemize}
    \item The rotated second-order cone or Euclidean norm-squared cone
    \begin{equation}
        \mathcal{Q}_r^k = \left \{ \mathbf{z} \in \mathbb{R}^{k}: 2z_1z_2 \geq \sqrt{\sum_{i=3}^k z_i^2}, z_1, z_2 \geq 0 \right \},
    \end{equation}
    This cone can be written as a rotation of the second-order cone, i.e., $\mathbf{z} \in \mathcal{Q}^k \iff R_k\mathbf{z} \in \mathcal{Q}_r^k$ with
    $R_k :=   \begin{bmatrix}
    \sqrt{2}/2 & \sqrt{2}/2 & 0 \\
    \sqrt{2}/2 &  \sqrt{2}/2 & 0 \\
    0 & 0 & I_{k-2}
  \end{bmatrix}$~\citep{aps2018mosek}.%, or by a linear transformation of the second-order cone, i.e., $\mathcal{Q}_r^k = \left \{ \mathbf{z} \in \mathbb{R}^{k}: (\frac{x_1+x_2}{\sqrt{2}},\frac{z_1-z_2}{\sqrt{2}},\dots,z_k) \in \mathcal{Q}^{k} \right \}$.
    \item The power cone, with $l < k, \sum_{i \in \llbracket l \rrbracket} \alpha_i = 1, \alpha_{i \in \llbracket l \rrbracket} > 0$,
    \begin{equation}
        \mathcal{P}_k^{\alpha_1, \dots, \alpha_l} = \left \{ \mathbf{z} \in \mathbb{R}^k: \prod_{i = 1}^l z_i^{\alpha_i} \geq \sqrt{\sum_{i = l+1}^k z_i^2}, \quad z_i \geq 0 \quad i \in \llbracket l \rrbracket \right \}.
    \end{equation}
    
    This cone can be decomposed using a second-order cone and  $l-1$ three-dimensional power cones
    \begin{equation}
        \mathcal{P}_3^{\alpha} = \left \{ (z_1,z_2,z_3) \in \mathbb{R}^3: z_1^\alpha z_2^{1-\alpha} \geq \lvert z_3 \rvert, \quad z_1, z_2 \geq 0 \right \},
    \end{equation}
    through $l - 1$ additional variables $(u,v_{1},\dots,v_{l-2})$,
    \begin{equation}
        \mathbf{z} \in \mathcal{P}_k^{\alpha_1, \dots, \alpha_l} \iff 
        \begin{cases}
        (u,z_{l+1},\dots,z_{k}) \in \mathcal{Q}^{k-l+1},\\
        (z_1,v_1,u) \in \mathcal{P}_3^{\alpha_1}, \\
        (z_i,v_i,v_{i-1}) \in \mathcal{P}_3^{\bar{\alpha}_i}, \quad i=2,\dots,l-1,  \\
        (z_{l-1},z_l,v_{l-2}) \in \mathcal{P}_3^{\bar{\alpha}_{l-1}},
        \end{cases}
    \end{equation}
    where $\bar{\alpha}_i=\alpha_i/(\alpha_i+\cdots+\alpha_l)$ for $i=2,\dots,l-1$~\citep{aps2018mosek}.
    $\mathcal{P}_3^{\alpha}$ can be represented using linear and exponential cone constraints, i.e., $\lim_{\alpha \to 0} (z_1,z_2,z_2+\alpha z_3) \in \mathcal{P}_3^{\alpha} = (z_1,z_2,z_3) \in \mathcal{K}_{exp}$
\end{itemize}

Most, if not all, applications-related convex optimization problems can be represented by conic extended formulations using these standard cones~\citep{aps2018mosek}, i.e., in problem~\ref{prob:CP}, the cone $\mathcal{K}$ is a product $\mathcal{K}_1 \times \cdots \times \mathcal{K}_r$, where each $\mathcal{K}_i$ is one of the recognized cones mentioned above.
Equivalent conic formulations for more exotic convex sets using unique cones can be formulated with potential advantages for improved solution performance~\citep{coey2020towards,coey2022conic}.

As mentioned in the introduction, an alternative to a convex optimization problem's algebraic description as in problem~\ref{prob:MINLP} is the following Mixed-Integer Conic Programming (MICP) problem:
\begin{equation}
	\label{prob:MICP}
	\tag{MICP}
	\begin{aligned}
  &\min_{\mathbf{z,y}} && \mathbf{c}^T\mathbf{z}\\
	& \textnormal{s.t. } 
	&&\mathbf{A}\mathbf{z} +\mathbf{B}\mathbf{y} = \mathbf{b}, \\
	& && \mathbf{y}^l \leq \mathbf{y} \leq \mathbf{y}^u,\\
	& &&\mathbf{z} \in \mathcal{K} \subseteq \mathbb{R}^k,\ 
	\mathbf{y} \in \mathbb{Z}^{n_y},
	\end{aligned}
\end{equation}
where $\mathcal{K}$ is a closed convex cone.

Without loss of generality, integer variables need not be restricted to cones, given that corresponding continuous variables can be introduced via equality constraints. 
Notice that for an arbitrary convex function $f: \mathbb{R}^k \to \mathbb{R} \cup \{ \infty \}$, one can define a closed convex cone using its recession,
\begin{equation}
\label{eq:fascone}
    \mathcal{K}_f = \textnormal{cl} \{(\mathbf{z},\lambda,t): \lambda f(\mathbf{z} / \lambda) = \tilde{f}(\mathbf{z},\lambda) \leq t, \lambda > 0 \},
\end{equation}
where the function $\tilde{f}(\mathbf{z},\lambda)$ is the perspective function of function $f(\mathbf{z})$, and whose algebraic representation is a central piece of this work.
Closed convex cones can also be defined as the recession of convex sets.
On the other hand, a conic constraint can be equivalent to a convex inequality,
\begin{equation}
    \mathbf{A}\mathbf{x} \succcurlyeq_{\mathcal{K}} \mathbf{b} \iff \mathbf{g}(\mathbf{x}) \leq \mathbf{0}.
\end{equation}
Although certain cones can be non-smooth, e.g., SOC cones, these can be reformulated using appropriately chosen smooth convex functions $\mathbf{g}(\mathbf{x})$~\citep{benson2003solving,ccezik2005cuts}.

We can therefore reformulate problem~\ref{prob:MINLP} in the following parsimonious manner~\citep{lubin2016extended}:
\begin{equation}
	\label{prob:MINLPcone}
	\tag{MINLP-Cone}
	\begin{aligned}
  &\min_{\substack{\mathbf{x,y},s_{\llbracket J \rrbracket} \\ \mathbf{x}_f,\mathbf{y}_f,t_f, \\
  \mathbf{x}_{\llbracket J \rrbracket},\mathbf{y}_{\llbracket J \rrbracket}}} && t_f\\
	& \textnormal{s.t. } &&\mathbf{x} = \mathbf{x_f}, \mathbf{y} = \mathbf{y_f},\\
	& && ((\mathbf{x_f};\mathbf{y_f}),1,t_f) \in \mathcal{K}_f,\\
	& && \mathbf{x} = \mathbf{x_j}, \mathbf{y} = \mathbf{y_j}, \\
	& && \left((\mathbf{x_j};\mathbf{y_j}),1,s_j \right) \in \mathcal{K}_{g_j}, s_j \in \mathbb{R}_+, j \in \llbracket J \rrbracket, \\
	& && \mathbf{y}^l \leq \mathbf{y} \leq \mathbf{y}^u,\\
	& &&\mathbf{x}\in \mathbb{R}_+^{n_x},\ 
	\mathbf{y} \in \mathbb{Z}^{n_y},\\
	\end{aligned}
\end{equation}
where copies of the original variables $\mathbf{x}$ and $\mathbf{y}$ are introduced for the objective function and each constraint, $\mathbf{x_f},\mathbf{y_f},\mathbf{x_j},\mathbf{y_j}, j \in \llbracket J \rrbracket$, such that each belongs to the recession cone of each constraint defined as in~\eqref{eq:fascone}.
Each conic set requires the introduction of an epigraph variable $t$ and a recession variable $\lambda$.
The epigraph variable from the objective function, $t_f$, is used in the new objective, and the ones corresponding to the constraints are set as nonnegative slack variables $s_j$.
The recession variables $\lambda$ in~\eqref{eq:fascone} are fixed to one in all cases.

Notice that problem~\ref{prob:MINLPcone} is in \ref{prob:MICP} form with $\mathcal{K} = \mathbb{R}_+^{n_x + J} \times \mathcal{K}_f \times  \mathcal{K}_{g_1} \times \cdots \times \mathcal{K}_{g_J}$.
As mentioned above, the case when $\mathcal{K} = \mathcal{K}_1 \times \cdots \times \mathcal{K}_r$ where each $\mathcal{K}_i$ is a recognized cone is more useful from practical purposes.
Lubin et al.~\citep{lubin2016extended} showed that all the convex MINLP instances at the benchmark library MINLPLib~\citep{bussieck2003minlplib} could be represented with nonnegative, second-order, and exponential cones.

\subsection{Perspective function}
\label{sec:background.persp}

For a convex function $h(\mathbf{x}): \mathbb{R}^n \to \mathbb{R} \cup \{ \infty \}$ its perspective function  $\tilde{h}(\mathbf{x},\lambda): \mathbb{R}^{n+1} \to \mathbb{R} \cup \{ \infty \}$ is defined as
\begin{equation}
\label{eq:perspective}
    \tilde{h}(\mathbf{x},\lambda) = 
    \begin{cases}
    \lambda h(\mathbf{x}/\lambda) & \textnormal{if } \lambda > 0 \\
    \infty & \textnormal{otherwise}
    \end{cases}
\end{equation}

The perspective of a convex function is convex but not closed.
Hence, consider the closure of the perspective function $(\textnormal{cl}~\tilde{h})(\mathbf{x},\lambda)$ defined as
\begin{equation}
\label{eq:clperspective}
    \left( \textnormal{cl }\tilde{h} \right) (\mathbf{x},\lambda) = 
    \begin{cases}
    \lambda h(\mathbf{x}/\lambda) & \textnormal{if } \lambda > 0 \\
    h'_\infty (\mathbf{x}) & \textnormal{if } \lambda = 0 \\
    \infty & \textnormal{otherwise}
    \end{cases},
\end{equation}

where $h'_\infty (\mathbf{x})$ is the recession function of function $h(\mathbf{x})$\citep[Section B Proposition 2.2.2]{hiriart2004fundamentals}, and which in general does not have a closed-form.

The closure of the perspective function of a convex function is relevant for convex MINLP on two ends.
On the one hand, it appears when describing the closure of the convex hull of disjunctive sets.
On the other hand, as seen above, it can be used to define closed convex cones $\mathcal{K}$ that determine the feasible region of conic programs.
Relying on the amenable properties of convex cones, conic programs can be addressed with specialized algorithms, allowing for more efficient solution methods.

The closure of the perspective function presents a challenge when implementing it for nonlinear optimization models, given that it is not defined at $\lambda=0$.
As seen below, modeling this function becomes necessary when writing the convex hull of the union of convex sets.
Several authors have addressed this difficulty in the literature through $\varepsilon$-approximations.
The first proposal was made by Lee and Grossmann~\citep{lee2000new}, where
\begin{equation}
\label{eq:lee_persp}
    \left( \textnormal{cl } \tilde{h} \right) (\mathbf{x},\lambda) \approx (\lambda + \varepsilon) h \left( \frac{\mathbf{x}}{\lambda + \varepsilon} \right).
\end{equation}
This approximation is exact when $\varepsilon \to 0$. However, it requires values for $\varepsilon$, which are small enough to become numerically challenging when implemented in a solution algorithm.

Furman et al.~\citep{furman2020computationally} propose another approximation for the perspective function such that
\begin{equation}
\label{eq:furman_persp}
    \left( \textnormal{cl } \tilde{h} \right) (\mathbf{x},\lambda) \approx ((1-\varepsilon)\lambda + \varepsilon)h \left( \frac{\mathbf{x}}{(1-\varepsilon)\lambda + \varepsilon} \right) - \varepsilon h(0) (1-\lambda),
\end{equation}
which is exact for values of $\lambda = 0$ and $\lambda = 1$, is convex for $h(\mathbf{x})$ convex, and is exact when $\varepsilon \to 0$ as long as $h(\mathbf{0})$ is defined.
Using this approximation in the set describing the system of equations of the closed convex hull of a disjunctive set also has properties beneficial for mathematical programming.

This approximation is used in software implementations when reformulating a disjunctive set using its hull relaxation~\citep{vecchietti1999logmip,chenpyomo}.
Notice that even with its desirable properties, the approximation introduces some error for values $\varepsilon > 0$; hence, it is desirable to circumvent its usage.
As shown in~\citep{gunluk2012perspective} and the Section~\ref{sec:results}, using a conic constraint to model the perspective function allows for a more efficient solution of convex MINLP problems.
\subsection{Disjunctive Programming}
\label{sec:background.disjunct}

Optimization over disjunctive sets is denoted as Disjunctive Programming~\citep{balas1979disjunctive,balas2018disjunctive}.
The system of inequalities gives a disjunctive set joined by logical operators of conjunction ($\wedge$, ``and'') and disjunction ($\vee$, ``or'').
These sets are non-convex and usually represent the union of convex sets.
The main reference on Disjunctive Programming is the book by Balas~\citep{balas2018disjunctive}.

Consider the following disjunctive set 
\begin{equation}
\label{eq:disjset}
    \mathcal{C} = \left\{ \mathbf{x} \in \mathbb{R}^n : \mathbf{x} \in \bigvee_{i \in I} \mathcal{C}_i \right\} = \bigcup_{i \in I} \left\{ \mathbf{x} \in \mathbb{R}^n : \mathbf{x} \in \mathcal{C}_i\right\},
\end{equation}
where $\lvert I \rvert$ is finite.
Each set defined as $\mathcal{C}_i := \{ \mathbf{x} \in \mathbb{R}^n \mid \mathbf{h}_i(\mathbf{x}) \leq \mathbf{0} \}$ is a convex, bounded, and nonempty set defined by a vector-valued function $\mathbf{h}_i: \mathbb{R}^n \to \left( \mathbb{R} \cup \{ \infty \} \right)^{J_i}$.
Notice that is it sufficient for $\mathcal{C}_i$ to be convex that each component of $\mathbf{h}_i$, $h_{i \llbracket J_i \rrbracket }$, is a proper closed convex function, although it is not a necessary condition.
A proper closed convex function is one whose epigraph is a nonempty closed convex set~\citep{parikh2014proximal}.

Ceria and Soares~\citep{ceria1999convex} characterize the closure of the convex hull of $\mathcal{C}$, $\textnormal{cl conv}(\mathcal{C})$, with the following result.

\begin{theorem}~\citep{ceria1999convex}
\label{th:chull}
Let $\mathcal{C}_i = \{ \mathbf{x} \in \mathbb{R}^n \mid \mathbf{h}_i(\mathbf{x}) \leq \mathbf{0} \} \neq \emptyset$, assume that each component of $\mathbf{h}_i$, $h_{i\llbracket J_i \rrbracket}$, is a proper closed convex function, and let

\begin{equation}
\label{eq:chull}
\mathcal{H} = \left\{
\begin{aligned}
    &\mathbf{x} = \sum_{i \in I}\mathbf{v}_{i}, \\ 
    &\sum_{i \in I}\lambda_{i} = 1, \\
    &\left( \textnormal{cl } \tilde{\mathbf{h}}_i \right) (\mathbf{v}_{i},\lambda_{i})\leq 0 ,  &i \in I, \\
    &\mathbf{v}_{i} \in \mathbb{R}^n ,  &i \in I, \\
    &\lambda_{i} \in \mathbb{R}_+ ,  &i \in I
\end{aligned}
\right \}.
\end{equation}

Then $\textnormal{cl conv}(\bigcup_{i \in I} \mathcal{C}_i) = \textnormal{proj}_{\mathbf{x}}(\mathcal{H})$.
\end{theorem}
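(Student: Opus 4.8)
The plan is to prove the set equality by two inclusions, reading the perspective constraints in~\eqref{eq:chull} as a homogenization of each $\mathcal{C}_i$. Concretely, for each $i \in I$ the set $\mathcal{K}_i := \{(\mathbf{v},\lambda) \in \mathbb{R}^{n+1} : (\textnormal{cl }\tilde{\mathbf{h}}_i)(\mathbf{v},\lambda) \leq \mathbf{0}\}$ is a closed convex set that coincides with the closure of the cone $\{(\lambda\mathbf{x},\lambda) : \mathbf{x} \in \mathcal{C}_i,\ \lambda > 0\}$: for $\lambda > 0$ the condition $\lambda h_{ij}(\mathbf{v}/\lambda) \le 0$ is equivalent to $\mathbf{v}/\lambda \in \mathcal{C}_i$, while on the slice $\lambda = 0$ the closure~\eqref{eq:clperspective} is governed by the recession functions $(h_{ij})'_\infty$ and so picks out precisely the recession cone of $\mathcal{C}_i$. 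With this reading, $\mathcal{H}$ is the intersection of $\prod_{i\in I}\mathcal{K}_i$ with the affine conditions $\mathbf{x}=\sum_{i\in I}\mathbf{v}_i$ and $\sum_{i\in I}\lambda_i = 1$, so its projection structure is exactly the one handled by the conic result of Theorem~\ref{th:chullconic}.

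For the inclusion $\textnormal{conv}(\bigcup_{i\in I}\mathcal{C}_i) \subseteq \textnormal{proj}_{\mathbf{x}}(\mathcal{H})$, I would take an arbitrary convex combination $\mathbf{x}=\sum_{i\in I}\lambda_i \mathbf{x}_i$ with $\mathbf{x}_i \in \mathcal{C}_i$, $\lambda_i \ge 0$, $\sum_{i\in I}\lambda_i = 1$, and set $\mathbf{v}_i := \lambda_i \mathbf{x}_i$. When $\lambda_i > 0$ one has $(\textnormal{cl }\tilde{h}_{ij})(\mathbf{v}_i,\lambda_i) = \lambda_i h_{ij}(\mathbf{x}_i) \le 0$; when $\lambda_i = 0$ one has $\mathbf{v}_i = \mathbf{0}$ and $(\textnormal{cl }\tilde{h}_{ij})(\mathbf{0},0) = (h_{ij})'_\infty(\mathbf{0}) = 0$ by positive homogeneity of the recession function. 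Hence $(\mathbf{v}_i,\lambda_i) \in \mathcal{K}_i$ for every $i$ and $(\mathbf{x},\mathbf{v},\lambda) \in \mathcal{H}$, giving the containment.

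For the reverse direction $\textnormal{proj}_{\mathbf{x}}(\mathcal{H}) \subseteq \textnormal{cl conv}(\bigcup_{i\in I}\mathcal{C}_i)$, I would fix a feasible point, split $I$ into $I^+=\{i:\lambda_i>0\}$ and $I^0=\{i:\lambda_i=0\}$, and note $I^+ \neq \emptyset$ since $\sum_{i\in I}\lambda_i =1$. For $i \in I^+$ the point $\mathbf{v}_i/\lambda_i$ lies in $\mathcal{C}_i$, while for $i \in I^0$ the vector $\mathbf{v}_i$ is a recession direction of $\mathcal{C}_i$. Writing $\mathbf{x}=\sum_{i\in I^+}\lambda_i(\mathbf{v}_i/\lambda_i)+\sum_{i\in I^0}\mathbf{v}_i$, I would exhibit $\mathbf{x}$ as a limit of genuine convex combinations by shifting a vanishing weight $\varepsilon_i \downarrow 0$ onto the terms of $I^0$: fixing some $\mathbf{y}_i\in\mathcal{C}_i$ and using the point $\mathbf{y}_i + \varepsilon_i^{-1}\mathbf{v}_i \in \mathcal{C}_i$ with weight $\varepsilon_i$, compensated on a single index of $I^+$, produces combinations in $\textnormal{conv}(\bigcup_{i\in I}\mathcal{C}_i)$ converging to $\mathbf{x}$.

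The main obstacle is closing the last gap. The two inclusions give $\textnormal{conv}(\bigcup_{i\in I}\mathcal{C}_i) \subseteq \textnormal{proj}_{\mathbf{x}}(\mathcal{H}) \subseteq \textnormal{cl conv}(\bigcup_{i\in I}\mathcal{C}_i)$, so passing to closures yields $\textnormal{cl}\,\textnormal{proj}_{\mathbf{x}}(\mathcal{H}) = \textnormal{cl conv}(\bigcup_{i\in I}\mathcal{C}_i)$; but the stated equality also requires $\textnormal{proj}_{\mathbf{x}}(\mathcal{H})$ to be \emph{closed}, and projections of closed convex sets are not closed in general. Here I would lean on the standing boundedness of each $\mathcal{C}_i$ in the disjunctive set~\eqref{eq:disjset}: boundedness forces the recession cone of each $\mathcal{C}_i$ to be $\{\mathbf{0}\}$, so the slice $\lambda_i = 0$ of $\mathcal{K}_i$ collapses to $\mathbf{v}_i = \mathbf{0}$, the perturbation step becomes unnecessary, and $\textnormal{proj}_{\mathbf{x}}(\mathcal{H})$ reduces to $\textnormal{conv}(\bigcup_{i\in I}\mathcal{C}_i)$, the convex hull of a finite union of compact sets, which is already closed. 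Absent boundedness, the same conclusion follows from the closedness criterion for projections of closed convex sets, namely that no recession direction of $\mathcal{H}$ projects onto $\mathbf{0}$ outside the lineality space; this is the delicate point settled by the referenced result in~\citep{ben2001lectures}.
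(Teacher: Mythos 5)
Your argument is correct in substance, but it is worth noting that the paper itself offers no proof of this statement: it simply defers to \citep[Theorem 1]{ceria1999convex} and \citep[Theorem 1]{bonami2015mathematical}. What you have written is essentially a reconstruction of the standard homogenization proof from those references, and the comparison is therefore between a citation and an actual argument. Your two inclusions are sound: reading each constraint block as the closed conic hull of $\mathcal{C}_i \times \{1\}$, with the $\lambda_i = 0$ slice cut out by the recession functions and hence equal to the recession cone $(\mathcal{C}_i)_\infty$ (using $\mathcal{C}_i \neq \emptyset$), the forward inclusion via $\mathbf{v}_i = \lambda_i \mathbf{x}_i$ and the reverse inclusion via the split $I^+ \cup I^0$ and the vanishing-weight perturbation $\mathbf{y}_i + \varepsilon_i^{-1}\mathbf{v}_i$ are exactly the mechanisms used in the literature.

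You have also correctly isolated the one genuinely delicate point, which deserves emphasis: the two inclusions only give $\textnormal{conv}(\bigcup_{i}\mathcal{C}_i) \subseteq \textnormal{proj}_{\mathbf{x}}(\mathcal{H}) \subseteq \textnormal{cl conv}(\bigcup_{i}\mathcal{C}_i)$, and upgrading the right-hand inclusion to an equality requires an extra argument. This is not a cosmetic issue; it is precisely why the companion result, Theorem~\ref{th:chullconic}, is stated as a chain of inclusions with full equality only under a shared-recession-cone hypothesis, and the literal statement of the present theorem without any such hypothesis is known to require care. Your resolution under the paper's standing assumption that each $\mathcal{C}_i$ in~\eqref{eq:disjset} is bounded is complete: the $\lambda_i = 0$ slices collapse to $\{\mathbf{0}\}$, $\textnormal{proj}_{\mathbf{x}}(\mathcal{H})$ coincides with the convex hull of a finite union of compact convex sets, and that set is compact, hence closed. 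For the unbounded case your appeal to a recession-direction criterion for closedness of the projection is the right idea but is left unverified; since the paper only ever invokes the theorem for bounded disjuncts, this does not affect its use here, but if you want the theorem at the stated level of generality you should either add the recession-cone hypothesis of Theorem~\ref{th:chullconic} or carry out that verification explicitly.
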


\begin{proof}
See~\citep[Theorem 1]{ceria1999convex} and~\citep[Theorem 1]{bonami2015mathematical}.
\end{proof}

Theorem~\ref{th:chull} provides a description of $\textnormal{cl conv}(\mathcal{C})$ in a higher dimensional space, an \textit{extended formulation}.
This Theorem generalizes the result by~\citep{balas1979disjunctive,balas1985disjunctive,balas1998disjunctive,balas2018disjunctive} where all the convex sets $\mathcal{C}_i$ are polyhedral.
Even though the extended formulations induce growth in the size of the optimization problem, some of them have shown to be amenable for MINLP solution algorithms~\citep{tawarmalani2005polyhedral,hijazi2014outer,lubin2016extended,kronqvist2018reformulations}.

A similar formulation was derived by Stubbs and Mehrotra~\citep{stubbs1999branch} in the context of a Branch-and-cut method for Mixed-binary convex programs.
These authors notice that the extended formulation might not be computationally practical; hence, they derive linear inequalities or cuts from this formulation to be later integrated into the solution procedure.
Similar ideas have been explored in the literature~\citep{frangioni2006perspective}.
In particular cases, the dimension of the extended formulation can be reduced to the original size of the problem, e.g., when there are only two terms in the disjunction, i.e., $\lvert I \rvert =2$, and one of the convex sets $\mathcal{C}_i$ is a point~\citep{gunluk2012perspective}.
A description in the original space of variables has also been given for the case when one set $\mathcal{C}_1$ is a box and the constraints defining the other $\mathcal{C}_2$ is determined by the same bounds as the box and nonlinear constraints being isotone~\citep{hijazi2012mixed}.
This has been extended even further by Bonami et al.~\citep{bonami2015mathematical} with complementary disjunctions.
In other words, the activation of one disjunction implies that the other one is deactivated, in the case that the functions that define each set $\mathbf{h}_{\{1,2\}}$ are isotone and share the same indices on which they are non-decreasing.
The last two cases present the formulation in the original space of variables by paying a prize of exponentially many constraints required to represent $\textnormal{cl conv}(\mathcal{C})$.

In the case that $\mathcal{C}_i$ is compact, its recession cone is the origin, i.e., $\mathcal{C}_{i\infty}= \{ \mathbf{x} \in \mathbb{R}^n \mid \mathbf{h}'_{i\infty}(\mathbf{x}) \leq \mathbf{0} \} = \{ \mathbf{0} \}$~\citep[Section A, Proposition 2.2.3]{hiriart2004fundamentals}.
This fact, together with~\eqref{eq:clperspective} and Theorem~\ref{th:chull}, forces that for a compact $\mathcal{C}_i$, a value of $\lambda_i=0$ implies $\mathbf{v}_i=0$.
This fact has been used to propose mixed-integer programming formulations for expressing the disjunctive choice between convex sets by setting the interpolation variables to be binary $\lambda_i \in \{0,1\}, i \in I$~\citep{jeroslow1987representability,lee2000new}, i.e.,
\begin{equation}
\label{eq:dchull}
\mathcal{H}_{\{0,1\}} = \left\{
\begin{aligned}
    &\mathbf{x} = \sum_{i \in I}\mathbf{v}_{i}, \\ 
    &\sum_{i \in I}\lambda_{i} = 1, \\
    &\left( \textnormal{cl } \tilde{\mathbf{h}}_i \right) (\mathbf{v}_{i},\lambda_{i})\leq 0 ,  &i \in I, \\
    &\mathbf{v}_{i} \in \mathbb{R}^n ,  &i \in I, \\
    &\lambda_{i} \in \{ 0, 1 \} ,  &i \in I
\end{aligned}
\right \}.
\end{equation}

An interesting observation is that using the approximation of the closure of the perspective function from Furman et al.~\citep{furman2020computationally}, for any value of $\varepsilon \in (0,1)$, $\textnormal{proj}_\mathbf{x}(\mathcal{H}_{\{0,1\}})=\mathcal{C}$ when $\mathbf{h}_i(\mathbf{0})$ is defined $\forall i \in I$ and
\begin{equation}
    \label{eq:zeroproj}
    \left \{ \mathbf{x} \in \mathbb{R}^n : \mathbf{h}_i(\mathbf{x}) - \mathbf{h}_i(\mathbf{0}) \leq \mathbf{0}   \right \} = \{ \mathbf{0} \}, \forall i \in I
\end{equation}
see~\citep[Proposition 1]{furman2020computationally}.

The condition on~\eqref{eq:zeroproj} is required to ensure that if $\lambda_i=1$, then $\mathbf{v}_{i'}=0,  \forall i' \in I \setminus \{ i \}$.
This condition is not valid in general for a disjunctive set $\mathcal{C}$, but it is sufficient to have a bounded range on $\mathbf{x} \in \mathcal{C}_i,  i \in I$.
Moreover, when these conditions are satisfied, $\mathcal{C} \subseteq \textnormal{proj}_\mathbf{x}(\mathcal{H})$ using the approximation in~\eqref{eq:furman_persp} for $\varepsilon \in (0,1)$, with $\textnormal{cl conv} (\mathcal{C}) = \textnormal{proj}_\mathbf{x}(\mathcal{H})$ in the limit when $\varepsilon \to 0$~\citep[Proposition 3]{furman2020computationally}.

The problem formulation~\ref{prob:hr} is derived by replacing each disjunction with set $\mathcal{H}_{\{0,1\}}$~\eqref{eq:dchull}.
Notice that to guarantee the validity of the formulation, the condition on~\eqref{eq:zeroproj} is enforced implicitly by having the bounds over $\mathbf{x}$ included in each disjunct, leading to constraint $\mathbf{x}^{l}y_{ik} \leq \mathbf{v}_{ik} \leq \mathbf{x}^{u}y_{ik}$.

\section{Detailed computational results}

\begin{sidewaystable}[htbp]
\caption{Results for Quadratic GDPs using different mixed-integer reformulations and solvers. The least time and fewest nodes results for each instance within a reformulation are italicized. The best results overall are bolded.}
\label{tab:quadratic_solvers}
\resizebox{\textwidth}{!}{%
\rowcolors{1}{}{lightgray}
\begin{tabular}{@{}lllllllllllllll@{}}
\toprule
\multicolumn{1}{c}{} & \multicolumn{10}{c}{\textbf{Big-M}} & \multicolumn{4}{c}{} \\ \cmidrule(lr){2-11} 
\multicolumn{1}{c}{} & \multicolumn{2}{c}{BARON} & \multicolumn{2}{c}{CPLEX} & \multicolumn{2}{c}{KNITRO} & \multicolumn{2}{c}{MOSEK-OA} & \multicolumn{2}{c}{MOSEK-IP} & \multicolumn{2}{c}{} & \multicolumn{2}{c}{} \\ \cmidrule(lr){2-3} \cmidrule(lr){4-5} \cmidrule(lr){6-7} \cmidrule(lr){8-9} \cmidrule(lr){10-11}
Instance & Time [s] (Remaining gap \%) & Nodes & Time [s] (Remaining gap \%) & Nodes & Time [s] (Remaining gap \%) & Nodes & Time [s] (Remaining gap \%) & Nodes & Time [s] (Remaining gap \%) & Nodes &  &  &  &  \\ \midrule
\texttt{CLay0203} & 0.49 & \textit{\textbf{9}} & 0.35 & 56 & 0.80 & 175 & 0.61 & 145 & 0.78 & 261 &  &  &  &  \\
\texttt{CLay0204} & 0.73 & \textit{4} & \textit{\textbf{0.58}} & 557 & 7.19 & 1921 & 4.74 & 2589 & 10.88 & 2357 &  &  &  &  \\
\texttt{CLay0205} & 5.78 & \textit{61} & \textit{\textbf{3.96}} & 3972 & 102.76 & 16540 & 27.96 & 15559 & 177.25 & 20115 &  &  &  &  \\
\texttt{CLay0303} & 1.43 & \textit{\textbf{79}} & 0.61 & 203 & 2.22 & 347 & 1.36 & 367 & 1.07 & 271 &  &  &  &  \\
\texttt{CLay0304} & 5.98 & \textit{\textbf{155}} & \textit{\textbf{1.71}} & 842 & 15.33 & 1647 & 15.19 & 3313 & 59.30 & 6865 &  &  &  &  \\
\texttt{CLay0305} & 7.84 & \textit{\textbf{41}} & \textit{\textbf{4.04}} & 4629 & 99.68 & 14878 & 112.42 & 38589 & 139.41 & 12601 &  &  &  &  \\
\texttt{CLay0405} & 7.96 & \textit{\textbf{41}} & \textit{\textbf{4.07}} & 4629 & 97.83 & 14878 & 112.22 & 38589 & 138.11 & 12601 &  &  &  &  \\
\texttt{kClus\_3\_10\_2\_*} & 4.02$\pm$1.68 & \textit{186$\pm$102} & \textit{\textbf{0.54$\pm$0.32}} & 1053$\pm$507 & 1.74$\pm$.70 (.07\%) & 698$\pm$308 & 18.51$\pm$9.41 & 1846$\pm$554 & 2.56$\pm$0.56 & 916$\pm$206 &  & \textit{} &  &  \\
\texttt{kClus\_3\_10\_3\_*} & 9.87$\pm$8.32 & \textit{412$\pm$407} & \textit{\textbf{0.83$\pm$0.28}} & 1997$\pm$961 & 4.10$\pm$2.14 (.05\%) & 1613$\pm$928 & 88.93$\pm$36.88 & 3445$\pm$1093 & 5.17$\pm$2.06 & 1589$\pm$641 &  & \textit{} &  &  \\
\texttt{kClus\_3\_10\_5\_*} & 28.46$\pm$13.80 & \textit{880$\pm$511} & \textit{\textbf{4.36$\pm$3.62}} & 7555$\pm$3218 & 15.37$\pm$5.62   (.04\%) & 5119$\pm$1893 & 558.13$\pm$234.97 & 8354$\pm$2927 & 18.33$\pm$4.96 & 4322$\pm$1189 &  & \textit{} &  &  \\
\texttt{kClus\_3\_20\_2\_*} & 50.86$\pm$25.61 & \textit{1537$\pm$948} & \textit{\textbf{5.92$\pm$3.17}} & 19326$\pm$11194 & 91.70$\pm$52.41   (.11\%) & 20219$\pm$10165 & 1686.86$\pm$714.65 & 66645$\pm$26505 & 65.65$\pm$40.45 & 13118$\pm$8208 &  & \textit{} &  &  \\
\texttt{kClus\_3\_20\_3\_*} & 564.54$\pm$641.67 & \textit{12481$\pm$16401} & \textit{\textbf{85.29$\pm$87.60   (.51\%)}} & 261404$\pm$268823 & 1179.82$\pm$824.21   (12.48\%) & 130637$\pm$71286 & 3600+   (156.09\%) & 61030$\pm$12564 & 812.72$\pm$976.49 & 133093$\pm$161988 &  & \textit{} &  &  \\
\texttt{kClus\_3\_20\_5\_*} & 3302.28$\pm$591.02   (67.71\%) & \textit{27529$\pm$17966} & \textit{\textbf{516.73$\pm$49.54   (42.03\%)}} & 1000000+ & 2950.01$\pm$41.10   (85.39\%) & 200000+ & 3600+   (571.42\%) & 24085$\pm$4234 & 3594.30$\pm$17.97   (35.91\%) & 454679$\pm$8911 &  & \textit{} &  &  \\
\texttt{kClus\_5\_10\_2\_*} & 23.81$\pm$13.96 & \textit{1766$\pm$1040} & \textit{\textbf{5.22$\pm$3.18}} & 13507$\pm$9764 & 31.21$\pm$13.15   (.94\%) & 7382$\pm$3068 & 383.21$\pm$174.41 & 11391$\pm$3369 & 21.33$\pm$6.70 & 4999$\pm$1580 &  & \textit{} &  &  \\
\texttt{kClus\_5\_10\_3\_*} & 65.26$\pm$54.41 & \textit{3768$\pm$2988} & \textit{\textbf{15.00$\pm$16.73}} & 25142$\pm$23951 & 57.23$\pm$25.19   (.36\%) & 12898$\pm$5686 & 1337.58$\pm$623.21 & 17742$\pm$6985 & 51.54$\pm$31.30 & 10204$\pm$6176 &  &  &  &  \\
\texttt{kClus\_5\_10\_5\_*} & 439.59$\pm$331.61 & \textit{16507$\pm$12604} & \textit{\textbf{64.80$\pm$58.28}} & 80939$\pm$42792 & 406.47$\pm$243.50   (.14\%) & 57827$\pm$30849 & 3600+   (430.70\%) & 15037$\pm$2151 & 302.44$\pm$174.68 & 42001$\pm$24008 &  & \textit{} &  &  \\
\texttt{kClus\_5\_20\_2\_*} & 2607.50$\pm$1251.24   (93.15\%) & \textit{86762$\pm$48654} & \textit{\textbf{226.88$\pm$108.77   (9.39\%)}} & 550460$\pm$281643 & 2830.47$\pm$519.31   (560.88\%) & 191837$\pm$24425 & 3600+   (853.71\%) & 47549$\pm$9311 & 1758.49$\pm$915.35   (3.33\%) & 208111$\pm$110067 &  & \textit{} &  &  \\
\texttt{kClus\_5\_20\_3\_*} & 3600+ (509.02\%) & \textit{51192$\pm$12127} & \textit{\textbf{591.02$\pm$277.76   (126.18\%)}} & 1000000+ & 3232.92$\pm$63.71   (514.94\%) & 200000+ & 3600+ ($\infty$) & 19394$\pm$3531 & 3400.14$\pm$600.12   (67.67\%) & 335881$\pm$60762 &  & \textit{} &  &  \\
\texttt{kClus\_5\_20\_5\_*} & 3600+ & 11893$\pm$3233 & \textit{\textbf{795.61$\pm$73.33   (559.21\%)}} & 1000000+ & 3600+   (674.52\%) & 189226$\pm$2349 & 3600+ ($\infty$) & 7832$\pm$1295 & 3600+   (233.82\%) & 255259$\pm$3240 &  &  &  &  \\
\texttt{socp\_random\_10\_10\_10\_*} & 106.84$\pm$81.75 & \textit{3483$\pm$2927} & 482.81$\pm$926.53 & 160209$\pm$293928 & \textit{\textbf{33.98$\pm$12.48}} & 4233$\pm$1625 & 2772.27$\pm$829.39   (4.87\%) & 29369$\pm$8324 & 127.57$\pm$51.60 & 5273$\pm$2222 &  & \textbf{} &  &  \\
\texttt{socp\_random\_10\_10\_5\_*} & \textit{\textbf{8.98$\pm$5.99}} & \textit{325$\pm$357} & 14.25$\pm$4.61 & 3078$\pm$1477 & 12.55$\pm$3.01 & 2233$\pm$581 & 776.76$\pm$540.49 & 22688$\pm$15947 & 141.69$\pm$69.42 & 4601$\pm$2092 &  &  &  &  \\
\texttt{socp\_random\_10\_5\_10\_*} & 12.55$\pm$8.14 & \textit{455$\pm$371} & 17.23$\pm$6.77 & 3749$\pm$1562 & \textit{\textbf{7.94$\pm$5.10}} & 1777$\pm$1175 & 684.04$\pm$352.24 & 9112$\pm$4511 & 17.12$\pm$7.81 & 1734$\pm$736 &  &  &  &  \\
\texttt{socp\_random\_10\_5\_5\_*} & \textit{\textbf{1.36$\pm$.72}} & \textit{35$\pm$36} & 5.73$\pm$1.35 & 494$\pm$311 & 2.05$\pm$.85 & 559$\pm$305 & 52.39$\pm$28.37 & 1912$\pm$865 & 8.37$\pm$3.45 & 836$\pm$347 &  & \textbf{} &  &  \\
\texttt{socp\_random\_2\_2\_2\_*} & 0.03$\pm$0.01 & \textit{1$\pm$0} & 0.02$\pm$.01 & 1$\pm$1 & \textit{0.02$\pm$0.00} & 4$\pm$1 & 0.11$\pm$0.02 & 5$\pm$1 & 0.04$\pm$0.01 & 5$\pm$1 &  &  &  &  \\
\texttt{socp\_random\_5\_10\_10\_*} & 7.38$\pm$3.31 & \textit{305$\pm$202} & 6.18$\pm$1.80 & 1370$\pm$635 & \textit{\textbf{3.35$\pm$1.96}} & 680$\pm$499 & 129.11$\pm$42.08 & 3138$\pm$939 & 10.66$\pm$3.80 & 1034$\pm$403 &  &  &  &  \\
\texttt{socp\_random\_5\_10\_5\_*} & 1.41$\pm$.81 & \textit{56$\pm$58} & 3.95$\pm$1.56 & 378$\pm$221 & \textit{1.42$\pm$.46} & 330$\pm$156 & 34.08$\pm$9.66 & 1526$\pm$523 & 5.19$\pm$2.00 & 548$\pm$251 &  &  &  &  \\
\texttt{socp\_random\_5\_5\_10\_*} & 1.29$\pm$.64 & \textit{39$\pm$32} & 3.95$\pm$.88 & 219$\pm$130 & \textit{\textbf{0.72$\pm$0.20}} & 160$\pm$64 & 17.45$\pm$5.00 & 569$\pm$192 & 1.18$\pm$.43 & 208$\pm$78 &  &  &  &  \\
\texttt{socp\_random\_5\_5\_5\_*} & 0.56$\pm$0.27 & \textit{19$\pm$16} & 1.91$\pm$.66 & 66$\pm$32 & \textit{0.53$\pm$0.14} & 137$\pm$68 & 3.79$\pm$0.92 & 344$\pm$84 & 0.90$\pm$0.24 & 185$\pm$66 &  &  &  &  \\ \bottomrule \\ \\
\toprule
\multicolumn{1}{c}{} & \multicolumn{10}{c}{\textbf{HR-Cone}} & \multicolumn{4}{c}{\textbf{HR $\varepsilon$-approximation}} \\ \cmidrule(lr){2-11} \cmidrule(lr){12-15} 
\multicolumn{1}{c}{} & \multicolumn{2}{c}{BARON} & \multicolumn{2}{c}{CPLEX} & \multicolumn{2}{c}{KNITRO} & \multicolumn{2}{c}{MOSEK-OA} & \multicolumn{2}{c}{MOSEK-IP} & \multicolumn{2}{c}{BARON} & \multicolumn{2}{c}{KNITRO} \\  \cmidrule(lr){2-3} \cmidrule(lr){4-5} \cmidrule(lr){6-7} \cmidrule(lr){8-9} \cmidrule(lr){10-11} \cmidrule(lr){12-13} \cmidrule(lr){14-15}
Instance & Time [s] (Remaining gap \%) & Nodes & Time [s] (Remaining gap \%) & Nodes & Time [s] (Remaining gap \%) & Nodes & Time [s] (Remaining gap \%) & Nodes & Time [s] (Remaining gap \%) & Nodes & Time [s] (Remaining gap \%) & Nodes & Time [s] (Remaining gap \%) & Nodes \\ \midrule
\texttt{CLay0203} & 2.46 & 109 & \textit{\textbf{0.33}} & 127 & 4.27 & 239 & 0.14* (91.4\%) & \textit{69} & 0.64 & 199 & 2.34 & 287 & 4.23 & 179 \\
\texttt{CLay0204} & \textit{2.26} & \textit{\textbf{1}} & \textit{0.68} & 842 & 13.88 & 1775 & 2.00 & 1395 & 5.41 & 1579 & 2.29 & 7 & 18.73 & 1771 \\
\texttt{CLay0205} & 18.03 & \textit{\textbf{45}} & \textit{6.38} & 7552 & 187.79 & 13111 & 32.51 & 16855 & 67.20 & 9433 & 12.38 & 177 & 246.84 & 15902 \\
\texttt{CLay0303} & 4.70 & 251 & \textit{\textbf{0.41}} & \textit{186} & 15.32 & 257 & 0.79 & 329 & 1.76 & 303 & 11.85 & 4297 & 6.27 & 223 \\
\texttt{CLay0304} & 48.16 & 2351 & \textit{1.89} & \textit{1046} & 65.81 & 1713 & 5.42 & 1801 & 49.08 & 6081 & 68.00 & 13185 & 47.28 & 1457 \\
\texttt{CLay0305} & 30.11 & \textit{133} & \textit{13.78} & 10590 & 515.58 & 15955 & 48.19 & 21269 & 97.22 & 10093 & 24.74 & 231 & 295.43 & 12227 \\
\texttt{CLay0405} & 30.10 & \textit{133} & \textit{13.78} & 10590 & 515.51 & 15955 & 48.21 & 21269 & 95.99 & 10093 & 24.88 & 231 & 292.45 & 12227 \\
\texttt{kClus\_3\_10\_2\_*} & 29.22$\pm$15.33 & 525$\pm$483 & \textit{1.31$\pm$0.28} & 936$\pm$290 & 56.26$\pm$17.84 (.13\%) & 756$\pm$337 & 8.17$\pm$2.76 & 1187$\pm$390 & 2.51$\pm$.77 & 551$\pm$177 & 22.75$\pm$11.80 & \textit{357$\pm$278} & 6.02$\pm$1.96 (.07\%) & 789$\pm$276 \\
\texttt{kClus\_3\_10\_3\_*} & 87.10$\pm$48.70 & 1148$\pm$898 & 7.17$\pm$2.24 & 1529$\pm$567 & 65.00$\pm$24.18 (.06\%) & 1630$\pm$809 & 34.90$\pm$15.15 & 2365$\pm$800 & \textit{6.01$\pm$1.79} & 1124$\pm$368 & 58.53$\pm$27.51 & \textit{655$\pm$450} & 15.52$\pm$7.11 (.07\%) & 1731$\pm$916 \\
\texttt{kClus\_3\_10\_5\_*} & 479.73$\pm$194.92 & 4220$\pm$2191 & 45.97$\pm$15.44 & 5135$\pm$1642 & 135.97$\pm$40.76   (.04\%) & 4009$\pm$1351 & 197.68$\pm$59.99 & 6381$\pm$2538 & \textit{22.92$\pm$9.46} & 3146$\pm$1377 & 287.92$\pm$137.92 & \textit{2086$\pm$1451} & 48.30$\pm$12.29   (.04\%) & 4015$\pm$1147 \\
\texttt{kClus\_3\_20\_2\_*} & 821.89$\pm$582.53 & 6773$\pm$5222 & \textit{39.23$\pm$16.94} & 19176$\pm$9424 & 1028.58$\pm$445.35   (.11\%) & 21647$\pm$10121 & 415.52$\pm$204.68 & 28496$\pm$15069 & 90.01$\pm$29.74 & 10441$\pm$3507 & 783.84$\pm$538.69 & \textit{5515$\pm$5066} & 318.99$\pm$125.66   (.11\%) & 20706$\pm$8089 \\
\texttt{kClus\_3\_20\_3\_*} & 3496.43$\pm$254.15   (129.69\%) & 13494$\pm$3443 & 1568.36$\pm$1137.08   (6.31\%) & 163189$\pm$110030 & 3224.30$\pm$590.94   (79.77\%) & 50106$\pm$7991 & 3370.83$\pm$478.28   (39.17\%) & 107636$\pm$18035 & \textit{1088.85$\pm$940.14   (2.56\%)} & 107190$\pm$100179 & 3236.45$\pm$578.69   (129.32\%) & \textit{9690$\pm$3092} & 2540.24$\pm$1190.91   (42.88\%) & 103881$\pm$43238 \\
\texttt{kClus\_3\_20\_5\_*} & 3600+   (452.53\%) & 5760$\pm$1573 & 3600+   (74.97\%) & 123015$\pm$24631 & 3600+   (151.18\%) & 36059$\pm$1622 & 3600+   (226.97\%) & 42462$\pm$9595 & \textit{3600+   (53.20\%)} & 266698$\pm$18613 & 3600+   (471.50\%) & \textit{3495$\pm$1601} & 3600+   (116.23\%) & 101891$\pm$1345 \\
\texttt{kClus\_5\_10\_2\_*} & 224.28$\pm$99.44 & 5250$\pm$2504 & 25.22$\pm$33.11 & 5857$\pm$1821 & 288.74$\pm$104.15   (.84\%) & 8166$\pm$3467 & 94.17$\pm$30.15 & 8665$\pm$3343 & \textit{23.47$\pm$6.75} & 3887$\pm$1179 & 177.28$\pm$84.11 & \textit{3685$\pm$1884} & 74.39$\pm$37.66   (.97\%) & 6781$\pm$3438 \\
\texttt{kClus\_5\_10\_3\_*} & 774.57$\pm$534.53 & 13334$\pm$9416 & 67.89$\pm$35.31 & 12002$\pm$6653 & 530.88$\pm$275.35   (.36\%) & 12517$\pm$7735 & 287.76$\pm$101.44 & 11438$\pm$4146 & \textit{48.41$\pm$17.54} & \textit{6214$\pm$2503} & 516.15$\pm$245.79 & 7823$\pm$4470 & 170.73$\pm$74.84   (.39\%) & 12432$\pm$5611 \\
\texttt{kClus\_5\_10\_5\_*} & 3289.68$\pm$664.31   (66.12\%) & 23297$\pm$5042 & 886.60$\pm$517.13 & 76632$\pm$48317 & 2681.71$\pm$754.70   (5.88\%) & 48680$\pm$13897 & 2771.39$\pm$852.71   (28.37\%) & 42357$\pm$14044 & \textit{386.01$\pm$266.65} & 32660$\pm$24156 & 2911.30$\pm$734.97   (176.34\%) & \textit{17516$\pm$6177} & 1237.42$\pm$768.45   (.14\%) & 60538$\pm$36374 \\
\texttt{kClus\_5\_20\_2\_*} & 3600.17$\pm$.06   (511.63\%) & 19732$\pm$3677 & \textit{1782.80$\pm$1019.48   (20.57\%)} & 447233$\pm$275642 & 3600+   (656.11\%) & 37293$\pm$2459 & 3600+   (176.55\%) & 120790$\pm$23599 & 2089.76$\pm$984.05   (8.21\%) & 154490$\pm$78837 & 3600+   (749.47\%) & \textit{16490$\pm$3378} & 3582.12$\pm$54.81   (694.41\%) & 118488$\pm$4948 \\
\texttt{kClus\_5\_20\_3\_*} & 3600+ ($\infty$) & 7425$\pm$2841 & 3600+   (222.05\%) & 169688$\pm$72270 & 3600+   (850.78\%) & 15355$\pm$1968 & 3600+   (729.79\%) & 39462$\pm$6099 & \textit{3600+   (73.28\%)} & 207274$\pm$16444 & 3600+   (947.78\%) & \textit{3976$\pm$1893} & 3600+   (683.11\%) & 88012$\pm$2561 \\
\texttt{kClus\_5\_20\_5\_*} & 3600+ ($\infty$) & 2200$\pm$936 & 3600+   (881.16\%) & \textit{41878$\pm$19519} & 3600+ ($\infty$) & 4885$\pm$939 & 3600+ ($\infty$) & 12659$\pm$2086 & \textit{3600+   (303.19\%)} & 121861$\pm$5742 & 3600+ ($\infty$) & 654$\pm$271 & 3600+   (759.28\%) & 54139$\pm$1713 \\
\texttt{socp\_random\_10\_10\_10\_*} & 2365.50$\pm$889.74   (4.66\%) & 1531$\pm$1406 & 454.42$\pm$291.88 & 7948$\pm$6799 & 1967.52$\pm$1065.66 & 1643$\pm$1919 & 2344.96$\pm$1082.43   (4.06\%) & 2271$\pm$1004 & \textit{54.33$\pm$43.63} & 1097$\pm$855 & 3469.74$\pm$391.88   (100.28\%) & \textit{\textbf{356$\pm$146}} & 185.97$\pm$164.91 & 2776$\pm$3025 \\
\texttt{socp\_random\_10\_10\_5\_*} & 231.09$\pm$126.89 & \textit{\textbf{191$\pm$198}} & 27.22$\pm$17.03 & 735$\pm$355 & 131.87$\pm$73.17 & 539$\pm$312 & 201.58$\pm$147.33 & 1916$\pm$1260 & \textit{10.07$\pm$4.74} & 429$\pm$250 & 1161.88$\pm$360.83 & 242$\pm$209 & 27.41$\pm$11.94 & 516$\pm$330 \\
\texttt{socp\_random\_10\_5\_10\_*} & 1021.48$\pm$733.14 & 1038$\pm$1325 & 50.15$\pm$24.51 & 927$\pm$505 & 92.10$\pm$29.22 & \textbf{390$\pm$172} & 434.27$\pm$315.97 & 1037$\pm$837 & \textit{9.83$\pm$3.85} & 497$\pm$215 & 2502.07$\pm$754.56   (2.82\%) & 996$\pm$603 & 20.08$\pm$6.33 & 440$\pm$195 \\
\texttt{socp\_random\_10\_5\_5\_*} & 42.83$\pm$36.83 & 44$\pm$88 & 3.92$\pm$2.44 & 166$\pm$97 & 25.92$\pm$20.49 & 218$\pm$315 & 14.20$\pm$6.28 & 272$\pm$136 & \textit{1.96$\pm$.73} & 150$\pm$86 & 95.10$\pm$70.92 & \textit{\textbf{28$\pm$49}} & 8.22$\pm$6.27 & 258$\pm$370 \\
\texttt{socp\_random\_2\_2\_2\_*} & 0.05$\pm$0.02 & 1$\pm$0 & \textit{\textbf{0.01$\pm$0.00}} & \textit{\textbf{0$\pm$0}} & 0.03$\pm$0.00 & 1$\pm$0 & 0.04$\pm$0.02 & 0$\pm$1 & 0.04$\pm$.01 & 0$\pm$1 & 0.05$\pm$0.02 & 1$\pm$1 & 0.03$\pm$0.00 & 4$\pm$2 \\
\texttt{socp\_random\_5\_10\_10\_*} & 248.35$\pm$59.30 & \textit{\textbf{146$\pm$216}} & 43.59$\pm$18.64 & 944$\pm$580 & 92.75$\pm$76.83 & 199$\pm$291 & 238.98$\pm$181.32 & 549$\pm$585 & \textit{4.33$\pm$2.99} & 161$\pm$145 & 1011.75$\pm$306.25 & 192$\pm$160 & 16.19$\pm$7.57 & 205$\pm$307 \\
\texttt{socp\_random\_5\_10\_5\_*} & 36.89$\pm$22.51 & \textit{\textbf{29$\pm$31}} & 3.38$\pm$2.06 & 106$\pm$78 & 20.91$\pm$9.01 & 78$\pm$77 & 13.09$\pm$6.68 & 266$\pm$194 & \textit{\textbf{1.31$\pm$.55}} & 80$\pm$64 & 150.07$\pm$112.83 & 30$\pm$28 & 5.74$\pm$1.99 & 73$\pm$58 \\
\texttt{socp\_random\_5\_5\_10\_*} & 40.86$\pm$16.23 & \textit{\textbf{21$\pm$34}} & 6.17$\pm$5.28 & 126$\pm$119 & 9.25$\pm$3.27 & 47$\pm$26 & 24.11$\pm$10.54 & 97$\pm$51 & \textit{0.98$\pm$0.32} & 55$\pm$31 & 143.50$\pm$109.49 & 39$\pm$31 & 3.37$\pm$.87 & 55$\pm$37 \\
\texttt{socp\_random\_5\_5\_5\_*} & 8.38$\pm$3.39 & \textit{\textbf{5$\pm$6}} & 1.83$\pm$1.53 & 42$\pm$31 & 4.55$\pm$1.11 & 41$\pm$28 & 2.72$\pm$.73 & 59$\pm$25 & \textit{\textbf{0.38$\pm$0.12}} & 30$\pm$15 & 19.59$\pm$12.41 & 7$\pm$6 & 1.91$\pm$.22 & 40$\pm$27 \\ \bottomrule 
\multicolumn{15}{l}{\footnotesize{Results corresponding to $k$-mean clustering \texttt{kClus} and random instances \texttt{socp\_random} are the average and standard deviation over 10 randomly generated instances.  Values with * denote solver failure, and values with + denote the termination for reaching time or iteration limits. A gap of $\infty$ denotes that no feasible solution was found.}}
\end{tabular}

}
\end{sidewaystable}

\begin{sidewaystable}[htbp]
\caption{Results for Exponential GDPs using the Big-M reformulation and different solvers. The least time and fewest nodes results for each instance within a reformulation are italicized. The best results overall are bolded.}
\label{tab:exp_solvers_bm}
\resizebox{\textwidth}{!}{%
\rowcolors{1}{}{lightgray}
\begin{tabular}{@{}lccccccccccccc@{}}
\toprule
 & \multicolumn{4}{c}{\textbf{Big-M}} & \multicolumn{8}{c}{\textbf{Big-M-Cone}} &  \\  \cmidrule(lr){2-5}  \cmidrule(lr){6-13} 
 & \multicolumn{2}{c}{BARON} & \multicolumn{2}{c}{KNITRO} & \multicolumn{2}{c}{BARON} & \multicolumn{2}{c}{KNITRO} & \multicolumn{2}{c}{MOSEK-OA} & \multicolumn{2}{c}{MOSEK-IP} &  \\ \cmidrule(lr){2-3} \cmidrule(lr){4-5} \cmidrule(lr){6-7} \cmidrule(lr){8-9} \cmidrule(lr){10-11} \cmidrule(lr){12-13}  
Instance & Time [s] (Remaining gap \%) & Nodes & Time [s] (Remaining gap \%) & Nodes & Time [s] (Remaining gap \%) & Nodes & Time [s] (Remaining gap \%) & Nodes & Time [s] (Remaining gap \%) & Nodes & Time [s] (Remaining gap \%) & Nodes &  \\ \midrule
\texttt{LogReg\_10\_20\_1\_*} & \textit{\textbf{.28$\pm$.05}} & \textit{\textbf{1$\pm$0}} & 436.44$\pm$41.76 & 45384$\pm$2261 & 471.02$\pm$76.56 & 13699$\pm$3061 & 720.82$\pm$33.83 & 46478$\pm$2163 & 113.76$\pm$20.31 & 21941$\pm$1742 & 244.55$\pm$20.07 & 23759$\pm$1513 &  \\
\texttt{LogReg\_10\_20\_2\_*} & \textit{\textbf{.42$\pm$.53}} & \textit{\textbf{2$\pm$2}} & 441.82$\pm$43.23 & 45418$\pm$2507 & 453.59$\pm$47.54 & 13884$\pm$2365 & 719.01$\pm$44.70 & 45368$\pm$2353 & 111.68$\pm$18.63 & 22454$\pm$1443 & 238.62$\pm$17.22 & 24278$\pm$1820 &  \\
\texttt{LogReg\_2\_20\_1\_*} & \textit{14.79$\pm$6.55} & \textit{\textbf{71$\pm$26}} & 612.61$\pm$419.51 & 73711$\pm$45473 & 22.54$\pm$10.84 & 69$\pm$28 & 710.94$\pm$453.25 & 73709$\pm$45475 & 157.14$\pm$100.41 & 80143$\pm$48546 & 358.59$\pm$228.81 & 75566$\pm$44999 &  \\
\texttt{LogReg\_2\_20\_2\_*} & \textit{9.91$\pm$3.74} & \textit{\textbf{51$\pm$21}} & 453.38$\pm$270.08 & 53247$\pm$29832 & 37.34$\pm$19.98 & 180$\pm$228 & 690.44$\pm$421.71 & 53239$\pm$29840 & 120.22$\pm$79.25 & 59030$\pm$34633 & 272.45$\pm$158.24 & 57411$\pm$31825 &  \\
\texttt{LogReg\_5\_20\_1\_*} & \textit{26.61$\pm$8.56} & \textit{\textbf{187$\pm$81}} & 39.69$\pm$11.01 & 6891$\pm$1668 & 12.55$\pm$3.23 & 64$\pm$26 & 98.81$\pm$20.07 & 7111$\pm$1694 & 33.50$\pm$7.50 & 8834$\pm$1534 & 62.72$\pm$7.09 & 7864$\pm$1066 &  \\
\texttt{LogReg\_5\_20\_2\_*} & 24.90$\pm$8.71 & \textit{\textbf{172$\pm$75}} & 36.92$\pm$10.21 & 6673$\pm$1893 & 12.47$\pm$3.71 & 68$\pm$38 & 98.16$\pm$23.75 & 6586$\pm$1842 & 34.56$\pm$12.30 & 9171$\pm$2629 & 57.55$\pm$12.03 & 7134$\pm$1746 &  \\
\texttt{RSyn0805} & \textit{0.83} & 35 & 104.75 & 14427 & 0.92 & \textit{33} & 94.49 & 12229 & 0.61 & 665 & 5.17 & 1645 &  \\
\texttt{RSyn0805M02} & \textit{8.73} & \textit{59} & 2278.77 & 87511 & 17.22 & 157 & 1995.42 & 75301 & 37.81 & 19339 & 178.07 & 17687 &  \\
\texttt{RSyn0805M03} & \textit{12.63} & \textit{51} & 3600+ (46\%) & 86418 & 15.29 & 57 & 3600+ (52\%) & 81914 & 108.12 & 40451 & 182.13 & 12213 &  \\
\texttt{RSyn0805M04} & \textit{14.08} & \textit{37} & 3600+ (64\%) & 59738 & 17.05 & 41 & 3600+ (66\%) & 51530 & 80.36 & 19417 & 630.2 & 27821 &  \\
\texttt{RSyn0810} & \textit{0.16} & \textit{\textbf{1}} & 299.27 & 34021 & \textit{0.16} & \textit{1} & 439.74 & 45863 & 0.42 & 307 & 15.52 & 3969 &  \\
\texttt{RSyn0810M02} & \textit{10.61} & \textit{57} & 3600+ (314\%) & 98341 & 42.75 & 419 & 3600+ (337\%) & 88431 & 263.01 & 103759 & 2075.97 & 177545 &  \\
\texttt{RSyn0810M03} & \textit{21.75} & \textit{55} & 3600+ (142\%) & 69420 & 139.25 & 698 & 3600+ (134\%) & 60807 & 2129.23 & 540975 & 3345.5 & 176379 &  \\
\texttt{RSyn0810M04} & \textit{20.18} & \textit{39} & 3600+ (120\%) & 39870 & 71 & 225 & 3600+ (121\%) & 33496 & 255.91 & 51883 & 3600+ (16\%) & 121298 &  \\
\texttt{RSyn0815} & \textit{0.78} & \textit{\textbf{15}} & 327.21 & 35507 & 1.31 & 17 & 430.84 & 42155 & 0.92 & 699 & 56.1 & 13117 &  \\
\texttt{RSyn0815M02} & \textit{8.64} & \textit{49} & 3600+ ($\infty$) & 99192 & 94.94 & 1418 & 3600+ (744\%) & 86741 & 77.64 & 26931 & 3600+ (10\%) & 304799 &  \\
\texttt{RSyn0815M03} & \textit{16.85} & \textit{53} & 3600+ (87\%) & 64196 & 32.47 & 93 & 3600+ (91\%) & 55528 & 254.47 & 44941 & 3600+ (16\%) & 199725 &  \\
\texttt{RSyn0815M04} & \textit{30.96} & \textit{49} & 3600+ ($\infty$) & 32893 & 724.02 & 1816 & 3600+ ($\infty$) & 30276 & 3600+ (5\%) & 574357 & 3600+ (45\%) & 135070 &  \\
\texttt{RSyn0820} & \textit{1.58} & \textit{41} & 3008.63 & 189045 & 3.42 & 73 & 3217.97 & 194571 & 5.96 & 4275 & 214.49 & 39649 &  \\
\texttt{RSyn0820M02} & \textit{14.27} & \textit{67} & 3600+ ($\infty$) & 79117 & 24.56 & 229 & 3600+ ($\infty$) & 64384 & 458.06 & 150715 & 3600+ (79\%) & 249620 &  \\
\texttt{RSyn0820M03} & \textit{27.41} & \textit{83} & 3600+ (609\%) & 50557 & 93.13 & 317 & 3600+ (613\%) & 42199 & 3601.3 (12\%) & 861705 & 3600+ (62\%) & 171127 &  \\
\texttt{RSyn0820M04} & \textit{42.33} & \textit{75} & 3600+ ($\infty$) & 30632 & 160.87 & 294 & 3600+ ($\infty$) & 27670 & 3600+ (29\%) & 471300 & 3600+ (121\%) & 114152 &  \\
\texttt{RSyn0830} & \textit{1.58} & \textit{45} & 3600+ (646\%) & 179714 & 4.9 & 113 & 3600+ (660\%) & 165011 & 2.01 & 1133 & 289.17 & 38547 &  \\
\texttt{RSyn0830M02} & \textit{8.6} & \textit{\textbf{33}} & 3600+ ($\infty$) & 66766 & 37.42 & 177 & 3600+ ($\infty$) & 54687 & 53.55 & 11553 & 3600+ (153\%) & 192187 &  \\
\texttt{RSyn0830M03} & \textit{27.19} & \textit{\textbf{61}} & 3600+ ($\infty$) & 33975 & 187.37 & 597 & 3600+ (1\%) & 30226 & 2610.51 & 262461 & 3600+ (132\%) & 115968 &  \\
\texttt{RSyn0830M04} & \textit{67.81} & \textit{117} & 3600+ ($\infty$) & 20713 & 466.32 & 1001 & 3600+ ($\infty$) & 19862 & 3600+ (6\%) & 292737 & 3600+ (143\%) & 75392 &  \\
\texttt{RSyn0840} & \textit{1.88} & \textit{39} & 3600+ ($\infty$) & 165386 & 4.3 & 75 & 3600+ ($\infty$) & 150704 & 0.87 & 367 & 1332.52 & 143695 &  \\
\texttt{RSyn0840M02} & \textit{12.23} & \textit{\textbf{45}} & 3600+ ($\infty$) & 53553 & 235.49 & 1209 & 3600+ ($\infty$) & 43142 & 259.71 & 38513 & 3600+ (190\%) & 122054 &  \\
\texttt{RSyn0840M03} & \textit{22.25} & \textit{43} & 3600+ (334\%) & 25769 & 210.43 & 738 & 3600+ (337\%) & 21384 & 762.7 & 65449 & 3600+ (55\%) & 97644 &  \\
\texttt{RSyn0840M04} & \textit{63.98} & \textit{131} & 3600+ (812\%) & 16402 & 3600+ (4.08\%) & 6134 & 3600+ ($\infty$) & 14389 & 3600+ (22\%) & 225035 & 3600+ (199\%) & 41952
 & \\
\texttt{Syn05} & \textbf{0.04} & \textit{\textbf{1}} & 0.03 & 9 & 0.04 & 3 & 0.04 & 9 & 0.07 & 5 & 0.06 & 5 &  \\
\texttt{Syn05M02} & \textit{\textbf{0.08}} & \textit{\textbf{1}} & 0.31 & 55 & 0.13 & \textit{1} & 0.33 & 47 & 0.13 & 5 & 0.12 & 7 &  \\
\texttt{Syn05M03} & 0.12 & \textit{\textbf{1}} & 0.45 & 85 & \textit{0.11} & \textit{1} & 0.47 & 79 & 0.19 & 5 & 0.15 & 9 &  \\
\texttt{Syn05M04} & \textit{0.09} & \textit{\textbf{1}} & 0.9 & 105 & 0.14 & \textit{1} & 0.89 & 75 & 0.24 & 7 & 0.25 & 11 &  \\
\texttt{Syn10} & \textit{0.04} & \textit{\textbf{1}} & 0.15 & 63 & \textit{0.04} & \textit{1} & 0.15 & 63 & 0.09 & 7 & 0.12 & 17 &  \\
\texttt{Syn10M02} & \textit{0.11} & \textit{\textbf{1}} & 6.39 & 1217 & 0.61 & 17 & 5.55 & 819 & 0.35 & 97 & 0.66 & 155 &  \\
\texttt{Syn10M03} & \textit{0.17} & \textit{\textbf{1}} & 18.91 & 2353 & 1.02 & 23 & 24.21 & 2497 & 0.47 & 125 & 1.62 & 281 &  \\
\texttt{Syn10M04} & \textit{0.24} & \textit{\textbf{1}} & 74.95 & 5727 & 2 & 35 & 96.06 & 5573 & 1.16 & 541 & 4.12 & 607 &  \\
\texttt{Syn15} & \textit{0.05} & \textit{\textbf{1}} & 0.32 & 101 & 0.22 & 9 & 0.35 & 101 & 0.19 & 17 & 0.32 & 79 &  \\
\texttt{Syn15M02} & \textit{0.16} & \textit{\textbf{1}} & 13 & 1563 & 0.85 & 11 & 14.93 & 1319 & 0.31 & 47 & 2.28 & 399 &  \\
\texttt{Syn15M03} & \textit{0.22} & \textit{\textbf{1}} & 46.28 & 3219 & 1.81 & 17 & 58.14 & 3225 & 1.27 & 405 & 10.29 & 1543 &  \\
\texttt{Syn15M04} & \textit{0.36} & \textit{\textbf{1}} & 294.81 & 13857 & 3.46 & 23 & 374.43 & 13659 & 1.66 & 419 & 38.91 & 4445 &  \\
\texttt{Syn20} & \textit{0.1} & \textit{\textbf{1}} & 2.05 & 727 & 0.34 & 13 & 2.61 & 727 & 0.23 & 89 & 1.75 & 443 &  \\
\texttt{Syn20M02} & \textit{0.27} & \textit{\textbf{1}} & 641.76 & 59013 & 1.15 & 11 & 1434.96 & 104925 & 0.47 & 143 & 19.64 & 3285 &  \\
\texttt{Syn20M03} & \textit{0.49} & \textit{\textbf{1}} & 3600+ (2\%) & 164828 & 5.67 & 43 & 3600+ (6\%) & 137094 & 3.32 & 963 & 361.29 & 44127 &  \\
\texttt{Syn20M04} & \textit{0.87} & \textit{\textbf{1}} & 3600+ (195\%) & 114242 & 12.45 & 85 & 3600+ (207\%) & 95068 & 20.36 & 6527 & - & - &  \\
\texttt{Syn30} & \textit{0.31} & \textit{\textbf{5}} & 155.74 & 33457 & 2.38 & 61 & 206.97 & 33831 & 0.27 & 19 & 10.57 & 2189 &  \\
\texttt{Syn30M02} & \textit{1.44} & \textit{7} & 3600+ (559\%) & 159527 & 10.35 & 129 & 3600+ ($\infty$) & 146231 & 1.5 & 299 & 567.27 & 59585 &  \\
\texttt{Syn30M03} & \textit{3.07} & \textit{7} & 3600+ (514\%) & 108728 & 20.39 & 163 & 3600+ (507\%) & 92989 & 6.03 & 841 & - & - &  \\
\texttt{Syn30M04} & \textit{5.67} & \textit{5} & 3600+ (585\%) & 68862 & 65.62 & 456 & 3600+ (577\%) & 45120 & 25.68 & 2803 & - & - &  \\
\texttt{Syn40} & \textit{0.34} & \textit{5} & 1412.67 & 154355 & 5.38 & 147 & 2078.01 & 176433 & 0.4 & 73 & 109.23 & 18233 &  \\
\texttt{Syn40M02} & \textit{1.63} & \textit{5} & 3600+ (248\%) & 128057 & 40.68 & 481 & 3600+ (267\%) & 109363 & 2.58 & 603 & - & - &  \\
\texttt{Syn40M03} & \textit{5.3} & \textit{11} & 3600+ ($\infty$) & 74752 & 244.77 & 1815 & 3600+ ($\infty$) & 58161 & 54.77 & 6411 & - & - &  \\
\texttt{Syn40M04} & \textit{14.24} & \textit{39} & 3600+ ($\infty$) & - & 493.84 & 2686 & - & - & - & - & - & - &  \\
\texttt{exp\_random\_10\_10\_10\_*} & \textit{\textbf{.66$\pm$.39}} & \textit{\textbf{1$\pm$0}} & 29.62$\pm$20.08 & 4553$\pm$3144 & 1.15$\pm$.25 & \textit{\textbf{1$\pm$0}} & 95.01$\pm$51.82 & 7648$\pm$4298 & 8.49$\pm$2.68 & 5924$\pm$2505 & 71.84$\pm$19.71 & 3103$\pm$713 &  \\
\texttt{exp\_random\_10\_10\_5\_*} & .69$\pm$.24 & \textit{\textbf{1$\pm$0}} & 14.87$\pm$7.20 & 2792$\pm$1485 & \textit{\textbf{.68$\pm$.24}} & \textit{\textbf{1$\pm$0}} & 43.22$\pm$18.51 & 3155$\pm$1460 & 2.74$\pm$1.54 & 1684$\pm$1321 & 50.14$\pm$34.17 & 2062$\pm$1341 &  \\
\texttt{exp\_random\_10\_5\_10\_*} & .67$\pm$.18 & \textit{\textbf{1$\pm$0}} & 5.33$\pm$1.80 & 1048$\pm$381 & \textit{\textbf{.54$\pm$.16}} & \textit{\textbf{1$\pm$0}} & 8.90$\pm$3.11 & 1020$\pm$424 & 1.21$\pm$.37 & 1084$\pm$371 & 8.69$\pm$4.54 & 721$\pm$444 &  \\
\texttt{exp\_random\_10\_5\_5\_*} & \textit{.25$\pm$.10} & \textit{\textbf{1$\pm$0}} & 4.14$\pm$1.32 & 1146$\pm$400 & \textbf{.32$\pm$.04} & \textit{\textbf{1$\pm$0}} & 8.38$\pm$2.75 & 1124$\pm$411 & 1.00$\pm$.30 & 691$\pm$322 & 5.04$\pm$2.74 & 423$\pm$188 &  \\
\texttt{exp\_random\_2\_2\_2\_*} & \textit{.03$\pm$.02} & \textit{\textbf{1$\pm$1}} & .03$\pm$.01 & 5$\pm$1 & \textbf{.04$\pm$.02} & \textit{\textbf{1$\pm$0}} & .03$\pm$.00 & 5$\pm$1 & .05$\pm$.01 & 0$\pm$0 & .07$\pm$.02 & 0$\pm$0 &  \\
\texttt{exp\_random\_5\_10\_10\_*} & .66$\pm$.20 & \textit{\textbf{1$\pm$0}} & 3.27$\pm$1.10 & 594$\pm$201 & \textit{\textbf{.65$\pm$.15}} & \textit{\textbf{1$\pm$0}} & 5.46$\pm$1.37 & 651$\pm$187 & 1.01$\pm$.25 & 651$\pm$340 & 6.12$\pm$1.79 & 501$\pm$142 &  \\
\texttt{exp\_random\_5\_10\_5\_*} & \textit{.23$\pm$.12} & \textit{\textbf{1$\pm$0}} & 1.62$\pm$.42 & 362$\pm$134 & \textbf{.28$\pm$.09} & \textit{\textbf{1$\pm$0}} & 3.32$\pm$1.04 & 413$\pm$159 & .62$\pm$.15 & 358$\pm$153 & 3.84$\pm$3.22 & 332$\pm$261 &  \\
\texttt{exp\_random\_5\_5\_10\_*} & .27$\pm$.02 & \textit{\textbf{1$\pm$0}} & .81$\pm$.24 & 196$\pm$80 & \textit{\textbf{.23$\pm$.05}} & \textit{\textbf{1$\pm$0}} & 1.27$\pm$.36 & 204$\pm$70 & .40$\pm$.06 & 244$\pm$153 & .78$\pm$.23 & 157$\pm$67 &  \\
\texttt{exp\_random\_5\_5\_5\_*} & .15$\pm$.03 & \textit{\textbf{1$\pm$0}} & .67$\pm$.32 & 171$\pm$140 & \textit{\textbf{.16$\pm$.03}} & \textit{\textbf{1$\pm$0}} & 1.17$\pm$.48 & 146$\pm$63 & .29$\pm$.07 & 102$\pm$45 & .47$\pm$.26 & 86$\pm$49 &  \\
\texttt{proc\_100} & 3600+ (627.33\%) & \textit{976} & 3165.79 (637\%) & 200000+ & 3600+ (484.9\%) & 229898 & 3600+ (741\%) & 161504 & 2682.55 (292\%) & 702218 & 3600+ (664\%) & 230134 &  \\
\texttt{proc\_100b} & 3600+ (579.56) & \textit{952} & 2756.28 (711\%) & 200000+ & 3600+ (530.6\%) & 249969 & 3600+ (853\%) & 176762 & 2225.72 (254\%) & 632605 & 3600+ (486\%) & 230521 &  \\
\texttt{proc\_21} & 1.27 & \textit{49} & 4.6 & 921 & 1.21 & 55 & 5.57 & 863 & 0.91 & 491 & 2.57 & 447 &  \\
\texttt{proc\_21b} & \textit{0.83} & \textit{25} & 43.95 & 11575 & 2.2 & 123 & 64.47 & 11833 & 1.19 & 1355 & 7.21 & 1629 &  \\
\texttt{proc\_31} & 0.6 & \textit{9} & 3.59 & 595 & 1.22 & \textit{9} & 5.02 & 555 & 0.52 & 139 & 3.09 & 273 &  \\
\texttt{proc\_31b} & \textit{1.3} & 31 & 1206.42 (1\%) & 200000+ & 1.52 & \textit{19} & 1837.92 (10\%) & 200000+ & 5.51 & 4439 & 56.14 & 10329 &  \\
\texttt{proc\_36} & 1 & \textit{11} & 5.4 & 779 & 1.45 & 9 & 8.06 & 719 & 0.87 & 373 & 4.02 & 549 &  \\
\texttt{proc\_36b} & 4.5 & 67 & 1741.1 (42\%) & 200000+ & \textit{3.32} & \textit{31} & 2446.1 (37\%) & 200000+ & 10.96 & 14387 & 206.17 & 39771 &  \\
\texttt{proc\_48} & \textit{1.3} & 13 & 45.59 & 5999 & 1.85 & \textit{13} & 74.83 & 5899 & 1.95 & 885 & 13.22 & 1331 &  \\
\texttt{proc\_48b} & \textit{7.79} & 37 & 2070.89 (81\%) & 200000+ & 8.24 & \textit{25} & 2627.41 (78\%) & 200000+ & 48.14 & 43739 & 994.5 & 145623 & 
\end{tabular}%
}
\end{sidewaystable}

\begin{sidewaystable}[htbp]
\caption{Results for Exponential GDPs using the Hull reformulation and different solvers. The least time and fewest nodes results for each instance within a reformulation are italicized. The best results overall are bolded.}
\label{tab:exp_solvers_hr}
\resizebox{\textwidth}{!}{%
\rowcolors{1}{}{lightgray}
\begin{tabular}{@{}llllllllllllll@{}}
\toprule
 & \multicolumn{4}{c}{\textbf{HR-$\varepsilon$}} & \multicolumn{8}{c}{\textbf{HR-Cone}} &  \\  \cmidrule(lr){2-5}  \cmidrule(lr){6-13} 
 & \multicolumn{2}{c}{BARON} & \multicolumn{2}{c}{KNITRO} & \multicolumn{2}{c}{BARON} & \multicolumn{2}{c}{KNITRO} & \multicolumn{2}{c}{MOSEK-OA} & \multicolumn{2}{c}{MOSEK-IP} &  \\ \cmidrule(lr){2-3} \cmidrule(lr){4-5} \cmidrule(lr){6-7} \cmidrule(lr){8-9} \cmidrule(lr){10-11} \cmidrule(lr){12-13}  
Instance & Time [s] (Remaining gap \%) & Nodes & Time [s] (Remaining gap \%) & Nodes & Time [s] (Remaining gap \%) & Nodes & Time [s] (Remaining gap \%) & Nodes & Time [s] (Remaining gap \%) & Nodes & Time [s] (Remaining gap \%) & Nodes &  \\ \midrule
\texttt{LogReg\_10\_20\_1\_*} & \textit{.29$\pm$.07} & \textit{\textbf{1$\pm$0}} & 451.32$\pm$41.09 & 45384$\pm$2261 & 3600+ & 291951$\pm$11436 & - & - & .74$\pm$.26 & 96$\pm$46 & 1.01$\pm$.32 & 72$\pm$45 \\
\texttt{LogReg\_10\_20\_2\_*} & \textit{.52$\pm$.76} & \textit{\textbf{2$\pm$2}} & 448.81$\pm$44.10 (.00\%) & 45418$\pm$2507 & 3600+ & 299651$\pm$7302 & - & - & 1.26$\pm$.61 & 142$\pm$103 & .86$\pm$.39 & 69$\pm$42 \\
\texttt{LogReg\_2\_20\_1\_*} & 20.31$\pm$8.78 & \textit{\textbf{71$\pm$26}} & 622.69$\pm$467.89 & 73711$\pm$45473 & 3600+ & 354150$\pm$27159 & - & - & \textit{\textbf{1.27$\pm$.33}} & 231$\pm$36 & 1.97$\pm$.41 & 162$\pm$40 \\
\texttt{LogReg\_2\_20\_2\_*} & 13.00$\pm$5.07 & \textit{\textbf{51$\pm$21}} & 462.20$\pm$288.83 & 53247$\pm$29832 & 3600+ & 345965$\pm$24189 & - & - & \textit{\textbf{1.16$\pm$.35}} & 168$\pm$68 & 2.20$\pm$.48 & 178$\pm$45 \\
\texttt{LogReg\_5\_20\_1\_*} & 32.11$\pm$11.15 & \textit{\textbf{187$\pm$81}} & 40.33$\pm$11.30 (.00\%) & 6891$\pm$1668 & 3600+ & 337068$\pm$18346 & - & - & \textit{\textbf{3.18$\pm$1.10}} & 929$\pm$256 & 9.88$\pm$2.15 & 878$\pm$190 \\
\texttt{LogReg\_5\_20\_2\_*} & 29.64$\pm$9.88 & \textit{\textbf{172$\pm$75}} & 38.80$\pm$10.72 (.00\%) & 6673$\pm$1893 & 3600+ & 330540$\pm$15143 & - & - & \textit{\textbf{3.48$\pm$1.36}} & 944$\pm$244 & 9.71$\pm$2.69 & 879$\pm$245 \\
\texttt{RSyn0805} & 0.51 & 3 & 1.62 & 117 & 2182.86 (1\%) & 1000000+ & 1.45 & 137 & \textit{\textbf{0.12}} & \textit{\textbf{0}} & 0.3 & 15 \\
\texttt{RSyn0805M02} & 2.84 & \textit{\textbf{5}} & 39.57 & 81 & 3600+ ($\infty$) & 368888 & 251.6 ($\infty$) & 100000 & \textit{\textbf{0.55}} & 19 & 2.08 & 93 \\
\texttt{RSyn0805M03} & 5.45 & \textit{\textbf{5}} & 17.91 & 103 & 3600+ (1\%) & 441998 & 654.83 ($\infty$) & 100000 & \textit{\textbf{0.72}} & 19 & 1.28 & 23 \\
\texttt{RSyn0805M04} & 8.28 & \textit{\textbf{5}} & 5.34 & 41 & 3600+ (1\%) & 275166 & 845.01 ($\infty$) & 100000 & \textit{\textbf{0.9}} & 7 & 1.53 & 11 \\
\texttt{RSyn0810} & 0.14 & \textit{\textbf{1}} & 1.76 & 111 & 3400.57 ($\infty$) & 1000000+ & 241.11 (5\%) & 199766 & \textit{\textbf{0.36}} & 7 & 0.44 & 9 \\
\texttt{RSyn0810M02} & 14.93 & 81 & 31.83 & 175 & 3600+ ($\infty$) & 266400 & 360.38 ($\infty$) & 100000 & \textit{\textbf{0.57}} & \textit{\textbf{17}} & 1.92 & 87 \\
\texttt{RSyn0810M03} & 40.54 & 121 & 42.97 & 489 & 3600+ ($\infty$) & 161105 & 598.03 ($\infty$) & 100000 & \textit{\textbf{1.67}} & \textit{89} & 8.2 & 299 \\
\texttt{RSyn0810M04} & 56.86 & 101 & 22.93 & 71 & 3600+ ($\infty$) & 57581 & 912.28 ($\infty$) & 100000 & \textit{\textbf{1.19}} & \textit{\textbf{9}} & 10.18 & 249 \\
\texttt{RSyn0815} & 6.08 & 89 & 4.31 & 139 & 3600 ($\infty$) & 601067 & 158.09 ($\infty$) & 199989 & \textit{\textbf{0.23}} & \textit{27} & 0.46 & 23 \\
\texttt{RSyn0815M02} & 16.33 & 55 & 13.14 & 67 & 3600+ ($\infty$) & 231668 & 343.19 ($\infty$) & 100000 & \textit{\textbf{0.66}} & \textit{\textbf{29}} & 4.86 & 217 \\
\texttt{RSyn0815M03} & 69.43 & 141 & 46.48 & 469 & 3600+ ($\infty$) & 112176 & 756.75 ($\infty$) & 100000 & \textit{\textbf{1.39}} & \textit{\textbf{99}} & 12.41 & 359 \\
\texttt{RSyn0815M04} & 49.77 & 37 & 39.35 & 171 & 3600+ ($\infty$) & 56238 & 1117.5 ($\infty$) & 100000 & \textit{\textbf{1.24}} & \textit{\textbf{5}} & 2.52 & 23 \\
\texttt{RSyn0820} & 12.21 & 159 & 2.44 & 127 & 3514.55 (8\%) & 1000000+ & 167.94 ($\infty$) & 199989 & \textit{\textbf{0.31}} & \textit{\textbf{17}} & 0.64 & 39 \\
\texttt{RSyn0820M02} & 27.95 & 77 & 98.68 & 299 & 3600+ ($\infty$) & 192510 & 368.17 ($\infty$) & 100000 & \textit{\textbf{0.72}} & \textit{\textbf{51}} & 4.16 & 151 \\
\texttt{RSyn0820M03} & 81.48 & \textit{117} & 249.24 & 2171 & 3600+ ($\infty$) & 76882 & 746.46 ($\infty$) & 100000 & \textit{\textbf{1.88}} & 119 & 16.58 & 455 \\
\texttt{RSyn0820M04} & 157.7 & 131 & 72.75 & 395 & 3600+ ($\infty$) & 30904 & 1363.63 ($\infty$) & 100000 & \textit{\textbf{1.67}} & \textit{\textbf{45}} & 11.49 & 155 \\
\texttt{RSyn0830} & 9.57 & 105 & 4.27 & 162 & 3600+ ($\infty$) & 504387 & 203.8 ($\infty$) & 200000+ & \textit{\textbf{0.38}} & 49 & 0.85 & \textit{\textbf{45}} \\
\texttt{RSyn0830M02} & 117.24 & 311 & 21.39 & 213 & 3600+ ($\infty$) & 125094 & 3600+ ($\infty$) & 2002 & \textit{\textbf{0.96}} & \textit{51} & 5.64 & 183 \\
\texttt{RSyn0830M03} & 249.05 & 235 & 85.91 & 445 & 3600+ ($\infty$) & 62648 & 3600+ ($\infty$) & 1498 & \textit{\textbf{2.22}} & \textit{179} & 10.38 & 203 \\
\texttt{RSyn0830M04} & 3600+ (1\%) & 1027 & 259.27 & 1265 & 3600+ ($\infty$) & 39280 & 2298.94 ($\infty$) & 100000 & \textit{\textbf{2.95}} & \textit{119} & 55.61 & 769 \\
\texttt{RSyn0840} & 4.92 & 29 & 1.41 & 27 & 3600+ ($\infty$) & 420832 & 234.28 ($\infty$) & 200000+ & \textit{\textbf{0.33}} & \textbf{17} & 0.67 & \textit{15} \\
\texttt{RSyn0840M02} & 221 & 223 & 12.52 & 103 & 3600+ ($\infty$) & 93859 & 1279.09 ($\infty$) & 100000 & \textit{\textbf{0.98}} & \textit{\textbf{37}} & 4.02 & 95 \\
\texttt{RSyn0840M03} & 2034.36 & 454 & 72.92 & 323 & 3600+ ($\infty$) & 40607 & 1398.34 ($\infty$) & 100000 & \textit{1.65} & \textit{41} & 10.26 & 149 \\
\texttt{RSyn0840M04} & 3600+ (2\%) & 413 & 657.44 & 2339 & 3600+ ($\infty$) & 27908 & 2710.11 ($\infty$) & 100000 & \textit{3.95} & \textit{153} & 73.35 & 823 \\
\texttt{Syn05} & 0.06 & \textit{\textbf{1}} & 0.03 & 3 & 819.94 ($\infty$) & 1000000+ & 0.1 & 3 & 0.04 & 0 & 0.05 & 5 \\
\texttt{Syn05M02} & 0.12 & \textit{\textbf{1}} & 0.05 & 3 & 1164.47 (87\%) & 1000000+ & 0.04 & 3 & 0.12 & 3 & 0.13 & 3 \\
\texttt{Syn05M03} & 0.28 & \textit{\textbf{1}} & 0.06 & 3 & 3047.17 ($\infty$) & 1000000+ & 0.05 & 3 & 0.14 & 3 & 0.1 & 3 \\
\texttt{Syn05M04} & 0.15 & \textit{\textbf{1}} & 0.08 & 3 & 3600+ ($\infty$) & 785490 & 0.06 & 3 & 0.19 & 3 & 0.12 & 3 \\
\texttt{Syn10} & 0.05 & \textit{\textbf{1}} & 0.03 & 3 & 1321.8 ($\infty$) & 1000000+ & 0.04 & 3 & \textit{0.01} & \textit{0} & \textit{0.01} & \textit{0} \\
\texttt{Syn10M02} & 0.44 & \textit{3} & 0.14 & 9 & 2103.28 (6\%) & 1000000+ & 0.08 & 5 & 0.22 & \textit{3} & 0.15 & \textit{3} \\
\texttt{Syn10M03} & 0.71 & \textbf{5} & 0.29 & 7 & 3357.55 (1\%) & 1000000+ & 0.15 & 7 & 0.33 & \textit{3} & \textit{0.16} & \textit{3} \\
\texttt{Syn10M04} & 2.1 & 15 & 0.27 & 7 & 3600+ (2\%) & 635620 & 0.22 & 7 & 0.35 & \textit{3} & \textit{0.26} & \textit{3} \\
\texttt{Syn15} & 0.2 & 5 & 0.05 & \textit{3} & 1900.43 ($\infty$) & 1000000+ & 4.85 (2\%) & 16387 & 0.08 & \textit{3} & 0.11 & \textit{3} \\
\texttt{Syn15M02} & 0.26 & \textit{\textbf{1}} & 0.14 & 5 & 3600+ (5\%) & 919619 & 0.09 & 3 & 0.15 & 3 & 0.16 & 3 \\
\texttt{Syn15M03} & 1.06 & 3 & 0.21 & 5 & 3600+ (7\%) & 500753 & 0.25 & 5 & 0.22 & \textit{3} & 0.24 & \textit{3} \\
\texttt{Syn15M04} & 1.9 & 5 & 0.35 & 6 & 3600+ (1\%) & 296378 & 0.31 & 3 & \textit{0.2} & \textit{0} & 0.39 & 3 \\
\texttt{Syn20} & 0.58 & 11 & 0.06 & 5 & 2537.75 (7\%) & 1000000+ & 0.08 & 5 & 0.18 & \textit{3} & 0.11 & \textit{3} \\
\texttt{Syn20M02} & 1.42 & 5 & 0.35 & 13 & 3600+ (3\%) & 975067 & 258.36 ($\infty$) & 199997 & 0.23 & \textit{3} & \textit{0.2} & 5 \\
\texttt{Syn20M03} & 7.26 & 21 & 1.02 & 27 & 3600+ ($\infty$) & 300067 & 932.34 ($\infty$) & 199997 & 0.31 & \textit{3} & \textit{0.44} & 13 \\
\texttt{Syn20M04} & 16.82 & 21 & 1.75 & 27 & 3600+ ($\infty$) & 208090 & 839.52 ($\infty$) & 199978 & 0.44 & \textit{5} & \textit{0.68} & 13 \\
\texttt{Syn30} & 2.1 & 15 & 0.13 (0\%) & 7 & 2735.21 ($\infty$) & 1000000+ & 0.14 (0\%) & \textit{7} & 0.18 & \textit{7} & 0.17 & \textit{7} \\
\texttt{Syn30M02} & 6.11 & 19 & 0.42 & 11 & 3600+ ($\infty$) & 292942 & 0.68 & 11 & \textit{0.36} & 11 & 0.38 & \textit{9} \\
\texttt{Syn30M03} & 65.88 & 49 & 1.56 & 23 & 3600+ ($\infty$) & 148036 & 605.33 ($\infty$) & 199998 & \textit{0.69} & \textit{21} & 0.82 & \textit{21} \\
\texttt{Syn30M04} & 85.14 & 47 & 3.56 & 37 & 3600+ ($\infty$) & 99419 & 4.43 & 37 & \textit{1.07} & 37 & 1.83 & \textit{27} \\
\texttt{Syn40} & 3.27 & 15 & 0.28 & 13 & 3600+ ($\infty$) & 827854 & 113.53 ($\infty$) & 200000+ & 0.23 & \textit{11} & \textit{0.22} & \textit{11} \\
\texttt{Syn40M02} & 32.11 & 71 & 1.39 & 23 & 3600+ ($\infty$) & 251756 & 512.42 ($\infty$) & 199998 & \textit{0.58} & 31 & 0.69 & \textit{15} \\
\texttt{Syn40M03} & 276.42 & 155 & 8.4 & 99 & 3600+ ($\infty$) & 115088 & 14.79 (0\%) & 39 & \textit{1.17} & 57 & 1.79 & \textit{39} \\
\texttt{Syn40M04} & 1108.16 & 317 & 64.61 & 210 & 3600+ ($\infty$) & 62600 & 1773.96 ($\infty$) & 200000+ & \textit{1.91} & 127 & 6.18 & \textit{115} \\
\texttt{exp\_random\_10\_10\_10\_*} & \textit{15.43$\pm$8.40} & \textit{1$\pm$0} & 46.25$\pm$36.45 & 398$\pm$357 & 3600+ & 75022$\pm$11349 & 848.66$\pm$49.23 & 200000+$\pm$0 & 27.98$\pm$12.71 & 264$\pm$183 & 22.48$\pm$10.59 & 319$\pm$180 \\
\texttt{exp\_random\_10\_10\_5\_*} & 12.92$\pm$4.21 & \textit{1$\pm$0} & 15.96$\pm$5.60 & 133$\pm$128 & 3600+ & 231038$\pm$20532 & 507.71$\pm$4.95 & 200000+$\pm$0 & \textit{8.39$\pm$6.46} & 457$\pm$750 & 7.07$\pm$3.20 & 155$\pm$89 \\
\texttt{exp\_random\_10\_5\_10\_*} & 7.32$\pm$4.84 & \textit{1$\pm$0} & 14.33$\pm$6.53 & 154$\pm$99 & 3600+ & 229524$\pm$16504 & 367.52$\pm$3.68 & 200000+$\pm$0 & \textit{5.37$\pm$1.82} & 107$\pm$70 & 3.80$\pm$1.49 & 105$\pm$57 \\
\texttt{exp\_random\_10\_5\_5\_*} & 2.30$\pm$1.34 & \textit{1$\pm$0} & 7.12$\pm$2.66 & 155$\pm$102 & 3600+ & 451525$\pm$33605 & 235.56$\pm$.85 & 200000+$\pm$0 & \textit{1.89$\pm$.60} & 73$\pm$51 & 3.16$\pm$1.32 & 139$\pm$76 \\
\texttt{exp\_random\_2\_2\_2\_*} & \textit{.03$\pm$.01} & 1$\pm$0 & .03$\pm$.00 & 1$\pm$0 & 1053.56$\pm$124.36 & 1000000+$\pm$0 & - & - & .04$\pm$.02 & \textit{0$\pm$0} & .04$\pm$.02 & 1$\pm$1 \\
\texttt{exp\_random\_5\_10\_10\_*} & 10.99$\pm$5.87 & \textit{3$\pm$5} & 7.17$\pm$3.33 & 35$\pm$23 & 3600+ & 323515$\pm$42747 & - & - & 5.60$\pm$2.24 & 44$\pm$40 & \textit{2.54$\pm$.99} & 44$\pm$28 \\
\texttt{exp\_random\_5\_10\_5\_*} & 2.11$\pm$1.51 & \textit{1$\pm$0} & 3.22$\pm$1.47 & 18$\pm$11 & 3600+ & 561311$\pm$57215 & - & - & 1.65$\pm$1.01 & 61$\pm$134 & \textit{1.35$\pm$.50} & 44$\pm$29 \\
\texttt{exp\_random\_5\_5\_10\_*} & 1.97$\pm$2.18 & \textit{1$\pm$0} & 2.46$\pm$1.12 & 30$\pm$24 & 3600+ & 664300$\pm$56737 & - & - & 1.72$\pm$.63 & 23$\pm$15 & \textit{.94$\pm$.24} & 33$\pm$22 \\
\texttt{exp\_random\_5\_5\_5\_*} & .72$\pm$.53 & \textit{1$\pm$0} & 1.10$\pm$.59 & 19$\pm$12 & 3511.73$\pm$114.18 & 972676$\pm$47392 & - & - & .61$\pm$.19 & 16$\pm$10 & \textit{.59$\pm$.13} & 28$\pm$16 \\
\texttt{proc\_100} & 129.79 & 287 & 1.25 & 21 & 3600+ (484.9\%) & 259774 & 474.53 ($\infty$) & 200000+ & 16.12 & 1831 & \textit{\textbf{1.47}} & \textit{\textbf{9}} \\
\texttt{proc\_100b} & 31.33 & \textit{7} & 5.46 & 87 & 3600+ (530.6\%) & 204643 & 472 ($\infty$) & 200000+ & 32.23 & 5199 & \textit{\textbf{4.97}} & \textbf{37} \\
\texttt{proc\_21} & 2.91 & \textit{6} & 1.15 & 79 & 3600+ ($\infty$) & 1000000+ & - & - & \textit{0.36} & 55 & \textbf{0.59} & \textbf{47} \\
\texttt{proc\_21b} & 2.16 & \textit{5} & 2.33 & 225 & 3600+ ($\infty$) & 1000000+ & 89.93 ($\infty$) & 200000+ & \textit{0.24} & 97 & \textbf{0.47} & \textbf{91} \\
\texttt{proc\_31} & 1.32 & \textit{1} & 0.18 (0\%) & 9 & 1832.16 ($\infty$) & 1000000+ & 150.63 ($\infty$) & 200000+ & 0.28 & 3 & \textit{\textbf{0.13}} & \textbf{3} \\
\texttt{proc\_31b} & 1.55 & \textit{1} & 0.31 (0\%) & 17 & 1424.67 ($\infty$) & 1000000+ & 142.84 ($\infty$) & 200000+ & 0.62 & 3 & \textit{\textbf{0.38}} & \textbf{3} \\
\texttt{proc\_36} & 3.61 & \textit{3} & 0.18 & 7 & 2123.32 ($\infty$) & 1000000+ & 166.94 ($\infty$) & 200000+ & 0.46 & 5 & \textit{\textbf{0.29}} & \textbf{5} \\
\texttt{proc\_36b} & 4.36 & \textit{1} & 0.6 & 27 & 1940.37 ($\infty$) & 1000000+ & 161.01 ($\infty$) & 200000+ & 0.67 & 19 & \textit{\textbf{0.22}} & \textbf{3} \\
\texttt{proc\_48} & 3.37 & \textit{1} & 0.28 & 11 & 2618.29 ($\infty$) & 1000000+ & 241.47 ($\infty$) & 200000+ & 0.99 & 7 & \textit{\textbf{0.35}} & \textbf{5} \\
\texttt{proc\_48b} & 5.08 & \textit{3} & 1.23 & 57 & 2444.62 ($\infty$) & 864362 & 223.77 ($\infty$) & 200000+ & 1.41 & 27 & \textit{\textbf{0.8}} & \textbf{29} \\
\bottomrule
\end{tabular}
}
\end{sidewaystable}

\end{document}